\newcommand{\mbb}{\mathbb}
\newcommand{\mbf}{\mathbf}
\newcommand{\mcl}{\mathcal}
\newcommand{\bs}{\boldsymbol}
\newcommand{\f}{\frac}
\newcommand{\T}{\textnormal}
\newcommand{\x}{\mathbf{x}}
\newcommand{\X}{\bs{\mathcal{X}}}
\newcommand{\D}{\bs{\mathcal{D}}}
\newcommand{\qpots}{$q\texttt{POTS}$}
\DeclareMathOperator*{\argmax}{arg,max}
\newcommand{\mP}{\mbb{P}r}
\theoremstyle{plain}
\newtheorem{theorem}{Theorem}[section]
\newtheorem{lemma}[theorem]{Lemma}
\theoremstyle{definition}
\newtheorem{assumption}[theorem]{Assumption}
\theoremstyle{remark}
\newtcbox{\mymath}[1][]{%
    nobeforeafter, math upper, tcbox raise base,
    enhanced, colframe=gray!30!black,
    colback=gray!30, boxrule=1pt,
    #1}
\title{$q\texttt{POTS}$: Efficient Batch Multiobjective Bayesian Optimization via Pareto Optimal Thompson Sampling~\footnote{Accepted to the 28th International Conference on Artificial Intelligence and Statistics (AISTATS) 2025}}
\author[1,2]{Ashwin Renganathan}
\author[1,2]{Kade Carlson}
\affil[1]{Aerospace Engineering, The Pennsylvania State University, University Park, PA, 16802}
\affil[2]{Penn State Institute of Computational and Data Sciences, University Park, PA, 16802}
\begin{document}

\date{}
\maketitle
% If your paper is accepted and the title of your paper is very long,
% the style will print as headings an error message. Use the following
% command to supply a shorter title of your paper so that it can be
% used as headings.
%
%\runningtitle{I use this title instead because the last one was very long}

% If your paper is accepted and the number of authors is large, the
% style will print as headings an error message. Use the following
% command to supply a shorter version of the authors names so that
% they can be used as headings (for example, use only the surnames)
%
%\runningauthor{Surname 1, Surname 2, Surname 3, ...., Surname n}

\begin{abstract}
     Classical evolutionary approaches for multiobjective optimization are quite accurate but incur a lot of queries to the objectives; this can be prohibitive when objectives are expensive oracles. A sample-efficient approach to solving multiobjective optimization is via Gaussian process (GP) surrogates and Bayesian optimization (BO). Multiobjective Bayesian optimization (MOBO) involves the construction of an acquisition function which is optimized to acquire new observation candidates sequentially. This ``inner'' optimization can be hard due to various reasons: acquisition functions being nonconvex, nondifferentiable and/or unavailable in analytical form; batch sampling usually exacerbates these problems and the success of MOBO heavily relies on this inner optimization. This, ultimately, affects their sample efficiency. To overcome these challenges, we propose a Thompson sampling (TS) based approach ($q\texttt{POTS}$). Whereas TS chooses candidates according to the probability that they are optimal, $q\texttt{POTS}$ chooses candidates according to the probability that they are Pareto optimal. Instead of a hard acquisition function optimization, $q\texttt{POTS}~$ solves a cheap multiobjective optimization on the GP posteriors with evolutionary approaches. This way we get the best of both worlds: accuracy of evolutionary approaches and sample-efficiency of MOBO. New candidates are chosen on the posterior GP Pareto frontier according to a maximin distance criterion. $q\texttt{POTS}~$ is endowed with theoretical guarantees, a natural exploration-exploitation trade-off, and superior empirical performance.
\end{abstract}

\section{Introduction}
 Mathematically, we consider the simultaneous constrained optimization of $K$ objectives 
\begin{equation} 
    \begin{split}
    \max_{\x \in \X}~&\{f_1(\x), \ldots, f_K(\x) \} \\    
    \T{s.t.} ~& c_i(\x) = 0,~i\in \mcl{E} \\
     ~& c_i(\x) \geq 0,~ i \in \mcl{I}
    \end{split}
    \label{eqn:main_problem}
    \end{equation}
where $\x \in \X \subset \mbb{R}^d$ is the design variable, $f_k: \X \rightarrow \mbb{R},~\forall k=1,\ldots,K$, are expensive zeroth-order oracles (i.e., no derivative information). $c_i : \X \rightarrow \mbb{R}$ are nonlinear constraints, also expensive zeroth-order oracles, where $\mcl{E}$ and $\mcl{I}$ are the sets of indices of equality and inequality constraints, respectively. $\X$ is the domain of $f$ and $c$. We are interested in identifying a \emph{Pareto} optimal set of solutions, which ``Pareto dominates'' all other points. Let $\bs{f(\x)}=[f_1(\x),\ldots,f_K(\x)]^\top$; when a solution $\bs{f(\x)}$ Pareto dominates another solution $\bs{f(\x')}$, then $f_k(\x) \geq f_k(\x'),~\forall k=1,\ldots,K$ and $ \exists k \in [K]$ such that $f_k(\x) > f_k(\x')$. We write Pareto dominance as $\bs{f(\x)} \succ \bs{f(\x')}$. The set $\mcl{Y}^* = \{\bs{f(\x)} ~:~ \nexists \x' \in \X: \bs{f(\x') \succ \bs{f(\x)}} \}$ is called the \emph{Pareto frontier} and the set $\X^* = \{ \x \in \X ~:~ \bs{f(\x)} \in \mcl{Y}^*\}$ is called the \emph{Pareto set}.

 We are interested in sample-efficient (that is, minimal queries to the oracles) approaches to solving our problem. Evolutionary algorithms (EA) such as the nondominated sorting genetic algorithm (NSGA)~\cite{deb2000fast, deb2002fast} are a popular choice for solving multiobjective problems (see \cite{zitzler2000comparison} for a review). However, EA based approaches are known to be not sample efficient, which could be prohibitive in real-world applications with expensive oracles. A common approach to solving this problem under such circumstances is to use Bayesian optimization (BO) with Gaussian process (GP) surrogate models for each objective. The idea is to fit a GP model for the objectives using some initial observations $\D^i_n = \{(\x_j, y_j),~j=1,\ldots,n\}$ (where we denote $y_j = f_i(\x_j)$) and construct an appropriate \emph{acquisition} function that quantifies the utility of a candidate point $\x$. Then, an inner optimization problem is solved to choose new point(s) that maximize the acquisition function. The surrogate models are updated with observations at the new points, and this process is repeated until a suitable convergence criterion is met. We provide a formal introduction in \Cref{sec:bo}. BO is very popular in the single objective ($K=1$) case; for instance, the expected improvement (EI)~\cite{jones1998efficient} and probability of improvement (PI)~\cite{mockus1978application,jones2001taxonomy} quantify the probabilistic improvement offered by a candidate point over the ``best'' observed point so far. The GP upper confidence bound (UCB)~\cite{srinivas2009gaussian} makes a conservative estimate of the maximum using the current GP estimate of the function. Other acquisition functions, for $K=1$, include entropy-based~\cite{wang2017max,hernandez2014predictive}, knowledge gradient (KG)~\cite{frazier2008knowledge,wu2016parallel,wu2019practical}, and stepwise uncertainty reduction (SUR)~\cite{picheny2014stepwise,chevalier2014fast}. Multiobjective Bayesian optimization (MOBO) extends BO to the multiobjective setting; we review some past work in what follows. 

 \subsection{Related work}
\label{sec:related_work}
Naturally, one is tempted to extend the ideas from single objective BO to the multiobjective setting. 

One of the most common approaches in MOBO are the ones based on scalarizations; that is the $K$ objectives are scalarized into one enabling the use of single objective optimization methods. For instance, the multiobjective efficient global optimization (ParEGO)~\cite{knowles2006parego}
scalarizes the $K$ objectives using randomly drawn weights, which is then treated like a single-objective using EI.  Multiobjective evolutionary algorithm based on decomposition (MOEA/D-EGO) \cite{zhang2009expensive} extends ParEGO to the batch setting using multiple random scalarizations and a genetic algorithm \cite{zhou2012multiobjective} to optimize these scalarizations in parallel. Other extensions of ParEGO to the batch setting (qParEGO) and its noisy variant, qNParEGO, have also been proposed \cite{daultonParallelBayesianOptimization2021}.
\cite{paria2020flexible} do a similar ``Chebyshev'' scalarization on the objectives but use Thompson sampling (TS) to optimize the scalarized objective. \cite{zhang2020random} propose to use a hypervolume scalarization with the property that the expected value of the scalarization over a specific distribution of weights is equivalent to the hypervolume indicator. The authors propose a upper confidence bound algorithm using randomly sampled weights, but provide a very limited empirical evaluation. Scalarizations have the drawback that they are not sample efficient and often fail to capture disconnected and non-convex/concave Pareto frontiers.

Outside of scalarizations, the most common approach in MOBO uses the hypervolume indicator to construct an acquisition function that quantifies the improvement in hypervolume. This usually results in a nonconvex optimization problem that is not straightforward to solve.
The EI acquisition function can be extended to the multiobjective setting via the expected hypervolume improvement (EHVI)~\cite{couckuytFastCalculationMultiobjective2014, emmerichComputationExpectedImprovement2008}; gradient based extensions of the EHVI also exist~\cite{yang2019multi, daulton2020differentiable}. Similarly, the SUR criterion extended to multiobjective problems~\cite{picheny2015multiobjective} is another example. 
Recent efforts \cite{daultonDifferentiableExpectedHypervolume2020,daultonParallelBayesianOptimization2021} extend the EHVI idea to sample a batch of acquisitions $(q>1)$ for parallel evaluation and with gradient computation for efficient optimization. However, the stochasticity of the acquisition functions and the associated difficulty in their optimization, particularly as $q$ increases, remains.

Recognizing the non-convexity of HVI based acquisition functions, several methods emerged which make ad hoc changes that better balance exploration and exploitation.
Diversity-Guided Efficient Multiobjective Optimization (DGEMO) \cite{konakoviclukovicDiversityGuidedMultiObjectiveBayesian2020} is a recent method for batch MOBO that improves diversity of the chosen points by solving a local optimization problem to maximize hypervolume improvement. Although DGEMO scales well to large batch sizes, it does not account for noisy observations and its ability to account for constraints is unknown.
Pareto active learning (PAL) \cite{zuluaga2013active} seeks to classify each candidate point in terms of their probability of being Pareto optimal or not, depending upon the GP posterior uncertainty. This approach presents another difficult acquisition function optimization that is handled via a discretized $\X$ in \cite{zuluaga2013active}.

Instead of computing hypervolume improvement, the entropy-based approaches compute the uncertainty reduction in the hypervolume due to new candidates. At the time of writing this manuscript, these approaches are considered state of the art, and primarily include predictive entropy search (PESMO)~\cite{hernandez-lobatoPredictiveEntropySearch} and max-value entropy search (MESMO)~\cite{belakariaMaxvalueEntropySearch2019}. MESMO improves upon PESMO in terms of scalability and computational cost however, the acquisition function is still not available in closed form and requires sample approximations. Pareto frontier entropy search (PFES)~\cite{suzuki2020multi} propose a closed-form entropy criterion for MOBO, but they are restricted to stationary GP kernels, via random Fourier features (RFFs). Additionally, joint entropy search multiobjective optimization (JESMO)~\cite{hvarfner2022joint, tu2022joint} which considers the joint information gain for
the optimal set of inputs and outputs, still depends on approximations to the acquisition function. 

The use of Thompson sampling (TS) in MOBO is relatively underexplored to the best of our knowledge. Existing methods still depend on the classical approaches---scalarization and hypervolume improvement---and, thus, inherit their limitations. For instance, the
Thompson sampling efficient multiobjective
optimization (TSEMO) \cite{bradford2018efficient} considers the Pareto set identified from posterior GP samples, which are then evaluated for maximum hypervolume improvement. The Thompson sampling with Tcheybchev scalarizations (TS-TCH), likewise, depends on scalarizing objective functions. Multiobjective regionalized Bayesian optimization (MORBO)~\cite{daulton2022multi} combines a trust-region strategy with MOBO to achieve scalability in higher dimensions, where they use TS to identify local candidates; however they still use the hypervolume improvement to identify the candidates and their ability to handle constraints is unknown.

\paragraph{Existing limitations and our contributions.} In summary, all of the existing approaches suffer from one or more of the following limitations. (i) Leading to a difficult inner optimization problem that can rarely be solved efficiently and accurately with off-the-shelf optimizers, which negatively impacts their sample efficiency (ii) inability to handle constraints, (iii) inability or difficulty handling batch acquisitions, and (iv) inability to handle noisy objectives .

Our contributions are focused on the following key parameters (i) sample efficiency in both the sequential and batch setting, (ii) ability to handle constraints, (iii) ability to handle noisy objectives, and (iv) ease of implementation. We show that our proposed method addresses all of these parameters while convincingly outperforming the state of the art.
% We address all the aforementioned limitations by proposing to solve cheap inner multiobjective optimization on the posterior GP samples to generate acquisitions. 
% It is worth mentioning that our work mirrors recent efforts on choosing acquisition candidates in BO without optimization~\cite{gramacy2022triangulation}.
Our central idea is inspired from TS in single-objective optimization; whereas TS selects candidate points according to the probability that they are optimal, our approach selects points according to the probability that they are Pareto optimal. Therefore, we call our method Pareto optimal Thompson sampling or $q\texttt{POTS}$, where the $q$ stands for ``batch'' acquisition. \qpots~involves solving a 
cheap multiobjective optimization problem on the GP posterior sample paths. Our acquisitions are sampled from the computed Pareto sets of the continuous GP posterior sample paths such that they maximize the minimum distance to previously observed points---thereby enabling diversity of candidates and well balancing exploration with exploitation. A very useful consequence of our approach is that selecting a batch of $q$ acquisitions costs the same as the sequential acquisitions. Overall, simplicity of our methodology is our biggest strength.

\begin{figure*}[ht!]
    \centering
    \begin{subfigure}{0.33\textwidth}
        \includegraphics[width=1\linewidth]{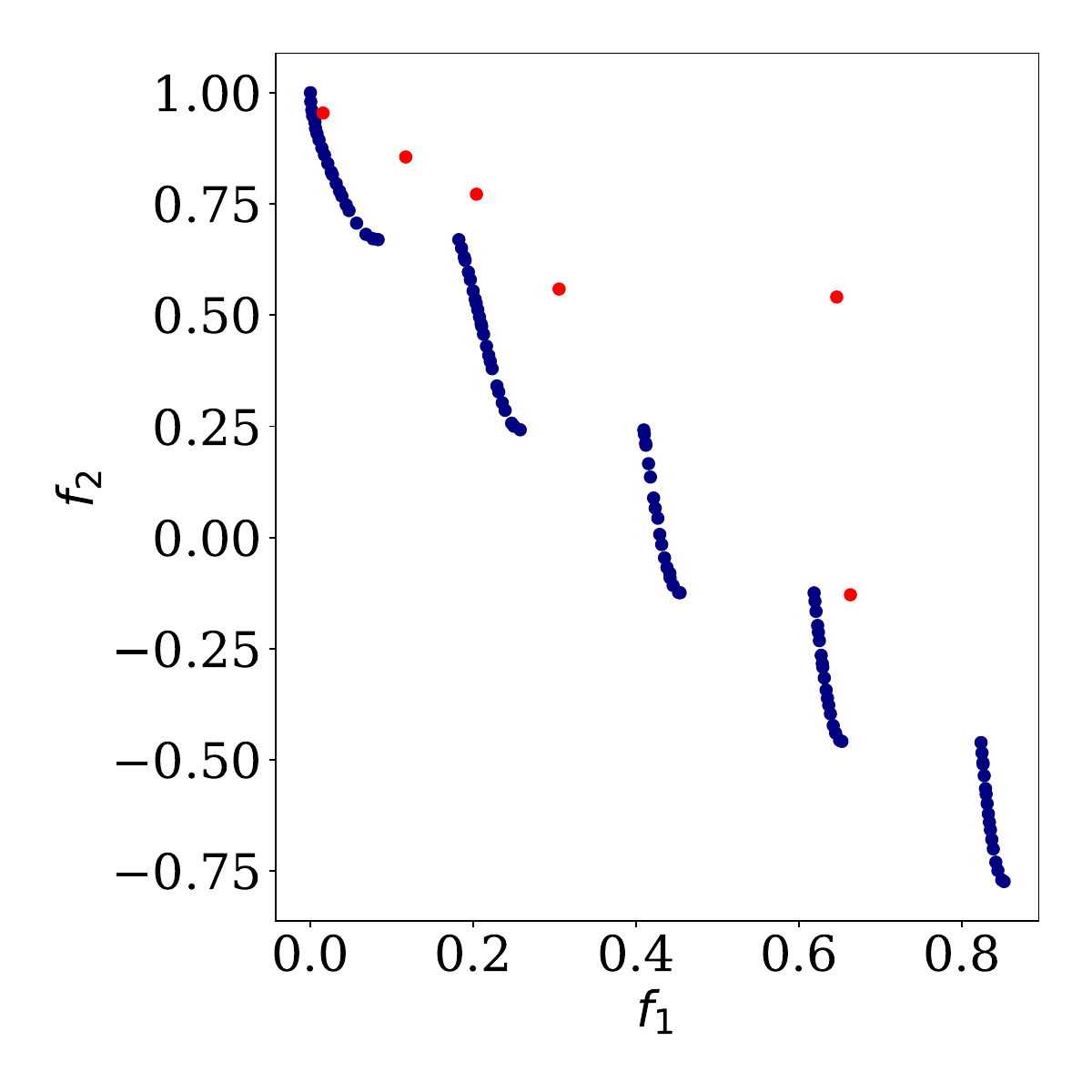}
        \caption{qNEHVI}
    \end{subfigure}%
    \begin{subfigure}{0.33\textwidth}
        \includegraphics[width=1\linewidth]{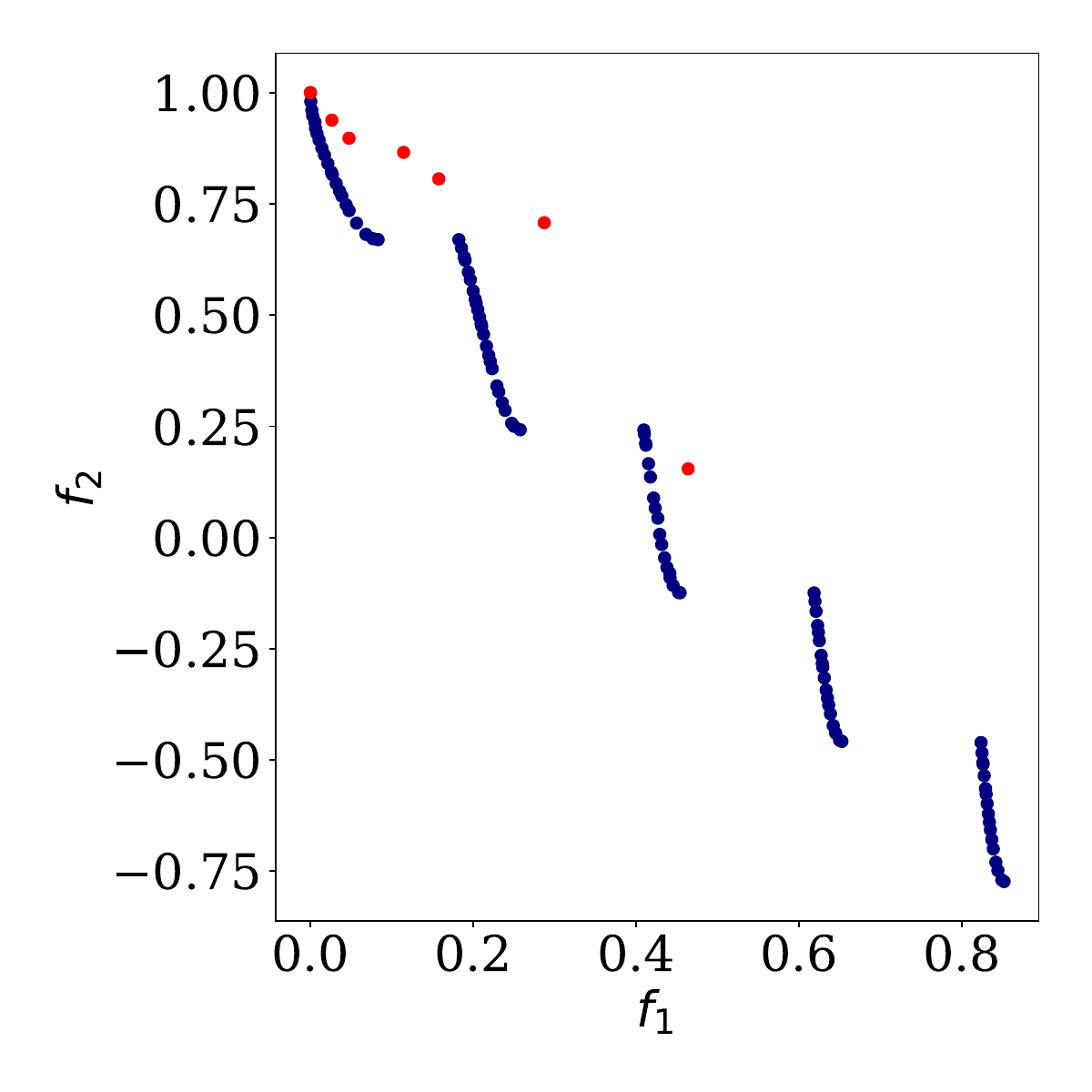}
        \caption{qNPAREGO}
    \end{subfigure}%
    \begin{subfigure}{0.33\textwidth}
        \includegraphics[width=1\linewidth]{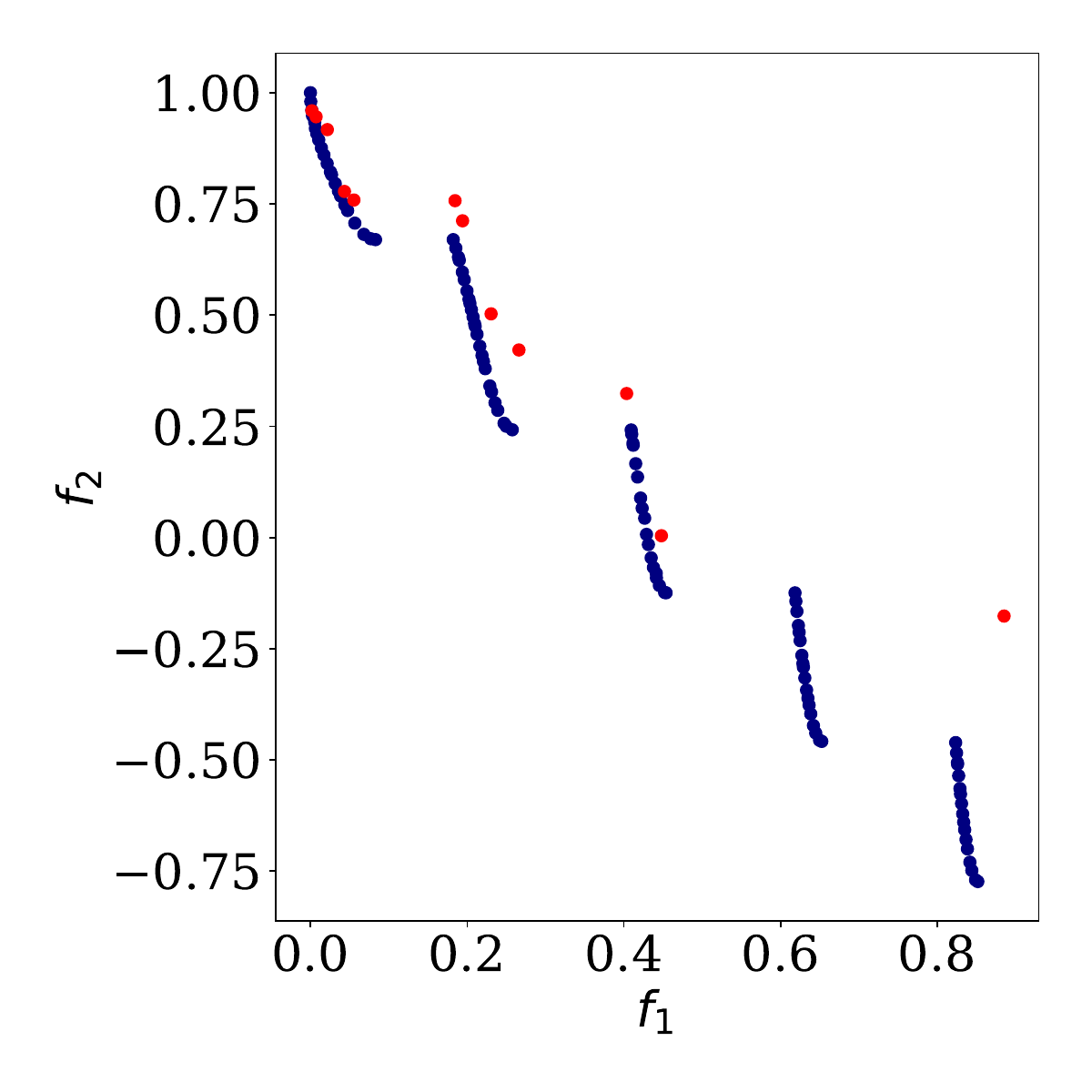}
        \caption{Sobol}
    \end{subfigure}\\
 \begin{subfigure}{0.33\textwidth}
        \includegraphics[width=1\linewidth]{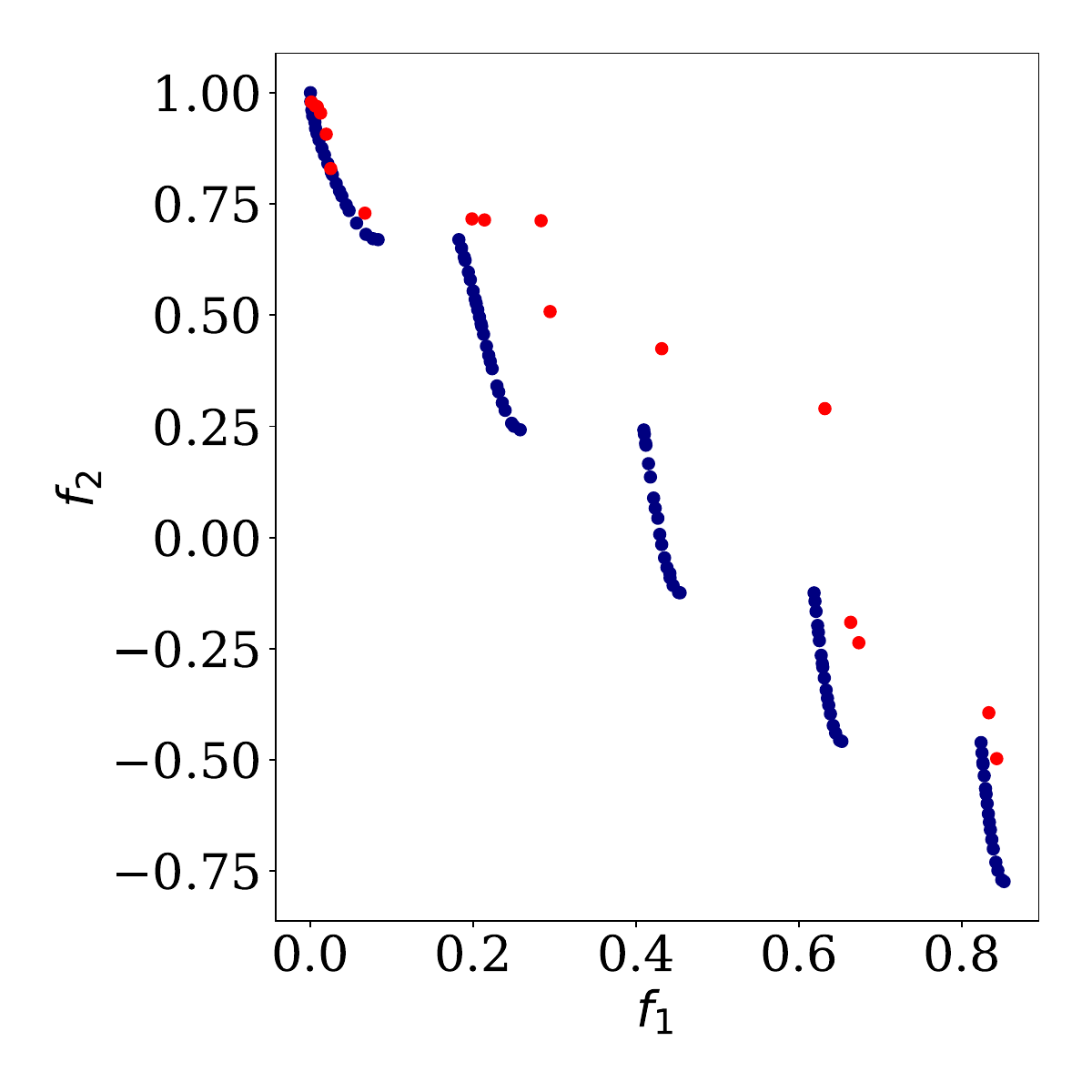}
        \caption{PESMO}
    \end{subfigure}%
    \begin{subfigure}{0.33\textwidth}
        \includegraphics[width=1\linewidth]{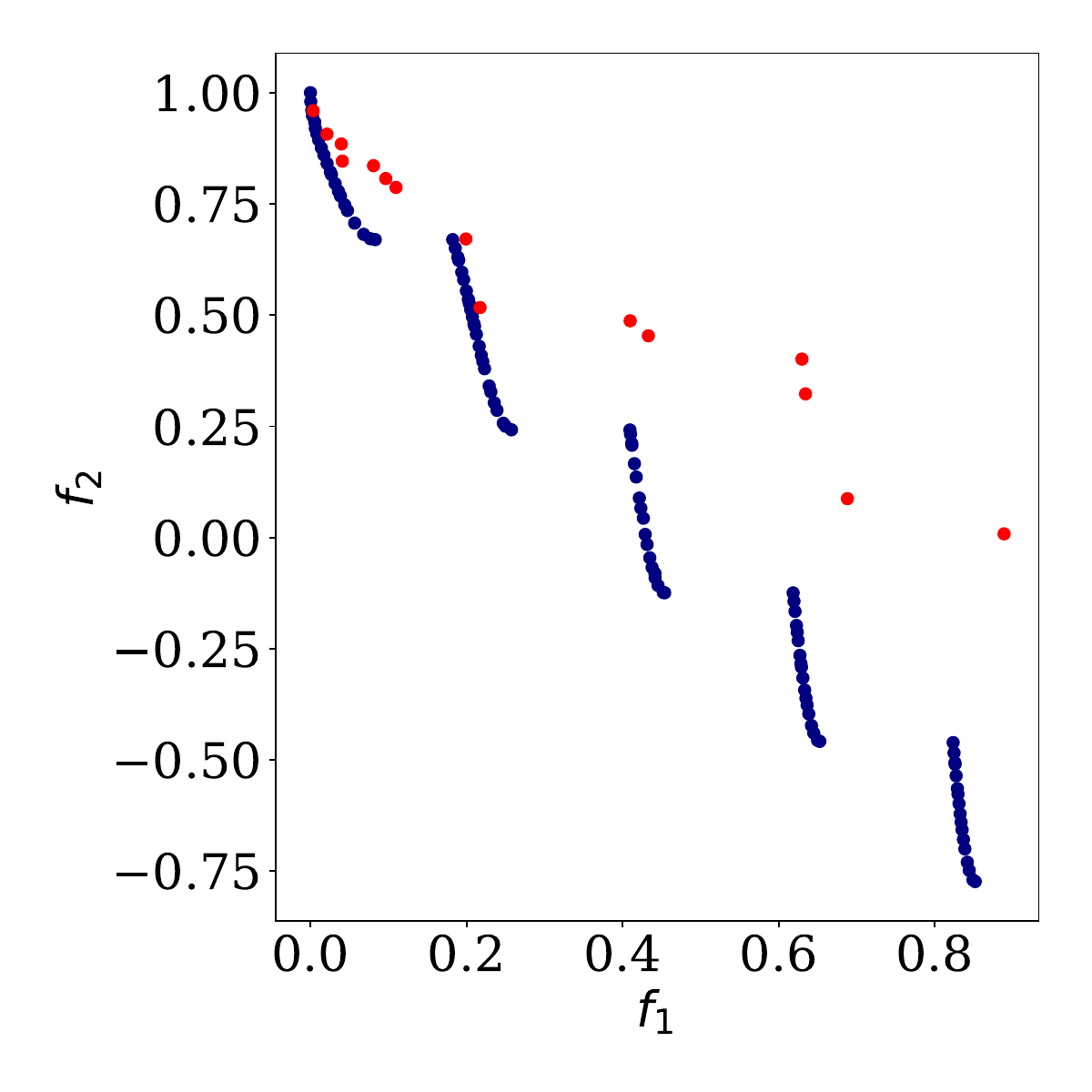}
        \caption{MESMO}
    \end{subfigure}%
    \begin{subfigure}{0.33\textwidth}
        \includegraphics[width=1\linewidth]{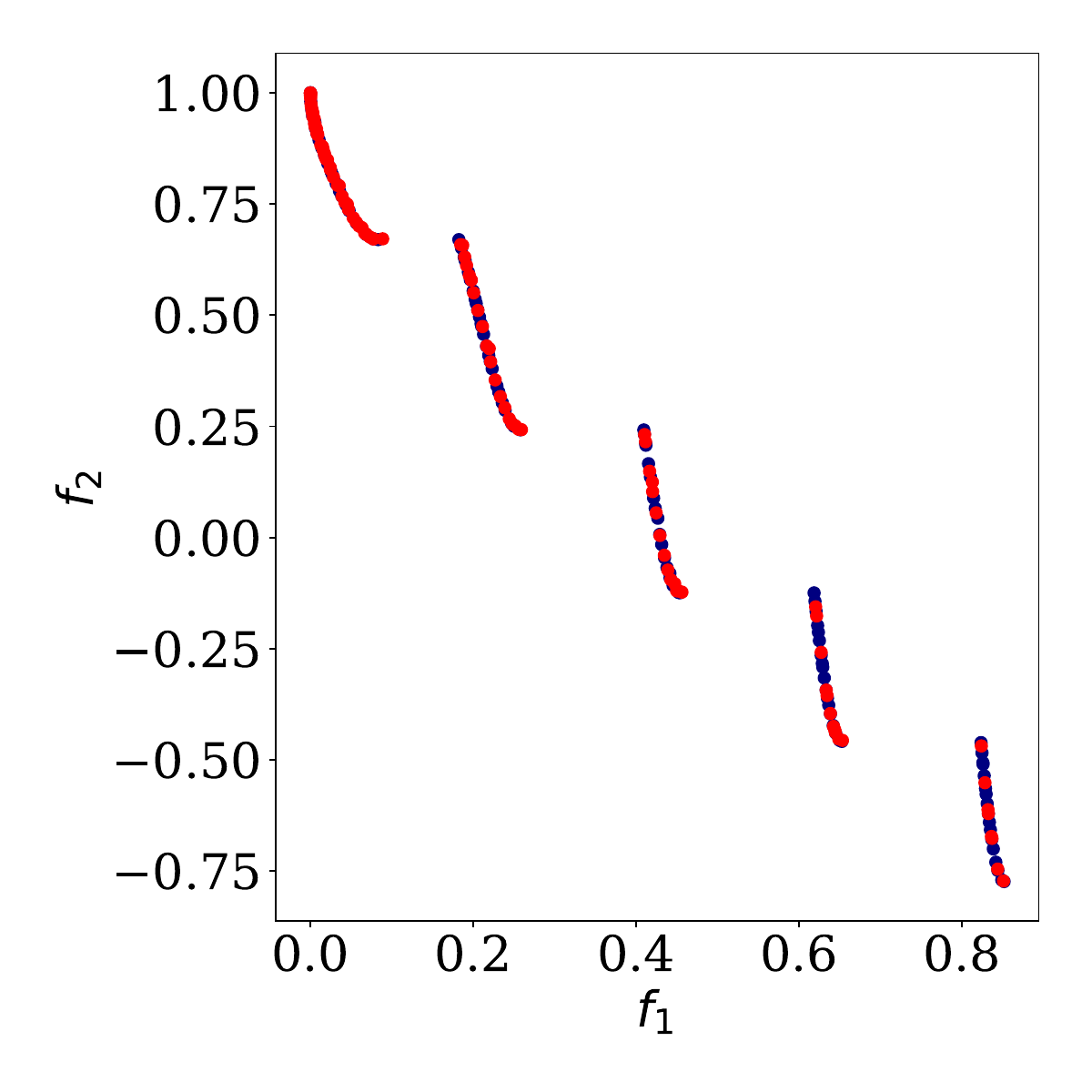}
        \caption{\qpots~(Ours)}
    \end{subfigure}\\    
    \caption{Demonstration on the ZDT3 ($K=2$) test function that has disjoint Pareto frontiers. Navy are the true Pareto frontier and red are the predicted Pareto frontier.
    With $20$ seed points and only an additional $204$ acquisitions, in batches of $q=4$, notice that \qpots~is quickly able to resolve the true Pareto frontier while other methods in the state of the art struggle to come any close.}
    \label{fig:zdt3}
\end{figure*}

The rationale behind our approach is as follows. Under low posterior uncertainty, the random Pareto frontier is likely a good approximation for $\mcl{Y}^*$, and thus we can exploit. On the other hand, under high posterior uncertainty, the random Pareto frontier is likely far from $\mcl{Y}^*$ and thus we can explore. Therefore our approach is endowed with a natural balance between exploration and exploitation; see \Cref{fig:bc_demo}. The work in the literature that is somewhat similar to us is that of \cite{bradford2018efficient} (TSEMO) but is significantly different. First, our method circumvents the hypervolume computation part, which scales exponentially with the number of objectives. Second, TSEMO does not support handling constraints, but that is straightforward in \qpots. Third, the ability for TSEMO to handle noisy objectives is unknown, but \qpots~handles it in a straightforward manner. Fourth, batch acquisition in TSEMO becomes intractable with batch size; in contrast, batch acquisition in our approach comes at the \emph{same} cost as sequential sampling. Fifth, TSEMO is restricted to stationary GP kernels due to their dependence on random Fourier features~\cite{rahimi2007random}; however our approach applies more widely and seamlessly works with nonstationary GP kernels such as deep GPs~\cite{damianou2013deep, booth2023contour, sauer2023active, rajaram2020deep} as well as multitask GPs~\cite{bonilla2007multi,swersky2013multi}. Finally, we show how our approach is better scalable in high dimensions by leveraging a Nystr\"{o}m approximation on the posterior covariance matrix. We show several experiments where \qpots~convincingly outperforms TSEMO, in addition to other methods in the state of the art. \Cref{fig:zdt3} provides a demonstration of \qpots~on the ZDT3 test problem that has a disjoint Pareto frontier.

The rest of the article is organized as follows. Details of the method are outlined in \Cref{s:method}, theoretical properties are discussed in \Cref{sec:theory}. The numerical experiments are presented in \Cref{s:experiments}. We provide concluding remarks and an outlook for future work in \Cref{s:conclusions}. Our software implementation is available at \url{https://github.com/csdlpsu/qpots}.

\section{Methodology}
\label{s:method}

\subsection{Single objective Bayesian optimization}
\label{sec:bo}
We place a GP prior on the oracle $f(\x) \sim \mcl{GP}(0, k(\x, \cdot))$, where $k(\cdot, \cdot): \X \times \X \rightarrow \mbb{R}_+$ is a covariance function (a.k.a., \emph{kernel}).
We denote the observations from the oracle as $y_i = f(\x_i) + \epsilon_i,~i=1,\ldots,n$, where we assume that $\epsilon_i$ is a zero-mean Gaussian with unknown variance $\tau^2$. We begin by fitting a posterior GP (parametrized by hyperparameters $\bs{\Omega}$) for the observations $\D_n = \{\x_i, y_i, \tau^2\},~i=1,\ldots,n$, which gives the conditional aposteriori distribution~\cite{rasmussen:williams:2006}:
\begin{equation}
    \begin{split}
        Y(\x) |
      \D_n, \bs{\Omega} &\sim \mcl{GP}(\mu_n(\x), {\sigma}^2_n (\x)), \\
\mu_n(\x) &= \mbf{k}_n^\top [\mbf{K}_n + \tau^2\mbf{I} ]^{-1} \mbf{y}_n\\
\sigma^2_n(\x) &=  k(\x, \x) - \mbf{k}_n^\top [\mbf{K}_n + \tau^2\mbf{I}]^{-1} \mbf{k}_n,
    \end{split}
    \label{e:GP}
\end{equation}
where $\mbf{k}_n \equiv k(\x, X_n)$ is a vector of covariances between $\x$ and all observed points in $\D_n$, $\mbf{K}_n \equiv k(X_n, X_n)$ is a sample covariance matrix of observed points in $\D_n$, $\mbf{I}$ is the identity matrix, and $\mbf{y}_n$ is the vector of all observations in $\D_n$. We also denote by $X_n = [\x_1,\ldots,\x_n]^\top \in \mbb{R}^{n\times d}$ the observation sites.  BO seeks to make sequential decisions based on a probabilistic utility function constructed out of the posterior GP $u(\x) = u(Y(\x)|\D_n)$. 
Sequential decisions are the result of an ``inner'' optimization subproblem that optimizes an acquisition function, typically, of the form $\alpha(\x) = \mbb{E}_{Y|\D_n}[u(Y(\x))]$. 
% For example, the EI~\cite{jones1998efficient} and PI~\cite{mockus1978application} can be expressed as $\alpha_\T{PI}(\x) = \mbb{E}_{Y|\D_n}[ \left(Y(\x) - \xi \right)]$ and $\alpha_\T{EI}(\x) = \mbb{E}_{Y|\D_n}[\max(0, \left(Y(\x) - \xi \right) )]$, respectively,
% \[
% \begin{split}
%     \alpha_\T{PI}(\x) =& \mbb{E}_{Y|\D_n}[ \left(Y(\x) - \xi \right)] \\
%     \alpha_\T{EI}(\x) =& \mbb{E}_{Y|\D_n}[\max(0, \left(Y(\x) - \xi \right) )],
% \end{split}
% \]
% where $\xi$ is a target value. Although $\alpha_{\T{EI}}$ and $\alpha_{\T{PI}}$ have analytically closed and continuously differential forms, their batch-acquisition variants are not necessarily~\cite{ginsbourger2010towards}. Further, the information theoretic approaches~\cite{hernandez-lobatoPredictiveEntropySearch, hernandez2014predictive, wang2017max}, also typically require approximations and/or Monte Carlo sampling, making them computationally expensive and hard to optimize.

An approach that circumvents the acquisition function construction is Thompson sampling (TS)~\cite{thompson1933likelihood}.
% TS is a method for sequential decision making, where each decision trades off exploration and exploitation of information. 
TS proposes making a decision according to the probability that it is optimal in some sense~\cite{russo2014learning}.  In (single objective) optimization, points are chosen according to the distribution $p_{\x^*}(\x)$, where $\x^*$ is the optimizer of the objective function.  
% Also known as posterior sampling~\cite{russo2014learning}, this approach draws a sample path from the posterior GP $Y(\x)|\D_n$ and chooses the next point that is optimal in some sense according to the sample path. 
In the context of BO, this can be expressed as
\begin{equation}
\begin{split}
    p_{\x^*}(\x) =& \int p_{\x^*}(\x|Y)p(Y|\D_n)dY \\  
    =& \int \delta(\x - arg\max_{\x \in \X}Y(\x))p(Y|\D_n)dY, 
\end{split}
\label{eqn:ts}
 \end{equation}
where $\delta$ is the Dirac delta function. Sampling from $p_{\x^*}(\x)$ is equivalent to finding the maximizer of a sample path $Y(\cdot, \omega)$, where $\omega$ is the random parameter; therefore, in TS for BO, the acquisition function is simply the posterior sample path $\alpha \equiv Y$. Our proposed approach builds on TS which we present next.
% \begin{algorithm}[t]
% \textbf{Given:} $\D_n = \lbrace
% (\x_i, s_i), \hat{y}_i \rbrace _{i=1}^n$, 
%  total budget $q$, 
%  %\\
%  and GP hyperparameters $\bs{\Omega}$ \\
% \KwResult{$(\x_q, \hat{y}_q)$}
%   \For{$i=n+1, \ldots, q$, }{
%   Find $\x_i, s_i \in \underset{(\x, s) \in \X \times \mcl{S}}{arg\max}~ \alpha(\x, s)$ \qquad (acquisition function maximization)\\
%     Observe $\hat{y}_i$ = $f(\x_i, s_i) + \epsilon_i$\\ 
%     Append $\D_i = \D_{i-1} \cup \lbrace (\x_i, s_i), \hat{y}_i \rbrace$\\ 
%     Update GP hyperparameters $\bs{\Omega}$ \\
%  }
%  \caption{Generic Bayesian optimization}
%  \label{a:BO}
% \end{algorithm}
\subsection{Multiobjective Bayesian optimization}
\label{ss:mobo}
We first fit $K$ independent posterior GP models: $\mcl{M}_1, \ldots, \mcl{M}_K$ for the $K$ objectives, with observations $\D^{1:K}_n \triangleq \{ \D_n^1,\ldots, \D_n^K \}$.
In the multiobjective setting, acquisition functions are typically defined in terms of a hypervolume (HV) indicator. HV of a Pareto frontier $\mcl{Y}^*$ is the $K-$dimensional Lebesgue measure $\lambda$ of the region dominated by $\mcl{Y}^*$ and bounded from below by a reference point $\mbf{r} \in \mbb{R}^K$. That is, $HV(\mcl{Y}^*|\mbf{r}) = \lambda_K (\bigcup _{\nu \in \mcl{Y}^*} [\mbf{r}, \nu])$, where $[\mbf{r}, \nu]$ denotes the hyper-rectangle bounded by vertices $\mbf{r}$ and $\nu$. EHVI~\cite{couckuytFastCalculationMultiobjective2014, emmerichComputationExpectedImprovement2008} computes the expectation (with respect to the posterior GP) of the improvement in HV due to a candidate point. Our approach, however, does not involve the computation of HV but is a direct extension of \cref{eqn:ts} to the multiobjective setting.
% Let $\mbf{y} = \{y_1, \ldots, y_K\}$ represent a point in the output space. Then, the HV improvement is defined as 
% \begin{equation}
% \begin{split}
%     HVI(\mbf{y}|\D^{1:K}_n, \mbf{r}) =& HV\left(\mcl{Y}|\{\D^{1:K}_n\} \bigcup (X, \mbf{y}), \mbf{r}\right) \\
%     -& HV(\mcl{Y}|\{\D^{1:K}_n\}, \mbf{r}).    
% \end{split}
% \end{equation}    
% The EHVI acquisition function computes the expectation of the $HVI$ with respect to the joint posterior of all $K$ GPs. Most of the existing acquisition functions for MOBO involve the computation of the hypervolume and some measure of improvement based on it.

\subsection{$q$\texttt{POTS:} Batch Pareto optimal Thompson sampling for MOBO}
\label{s:qpots}
% \begin{figure}[htb!]
%     \centering
%     \begin{subfigure}{.8\textwidth}
%         \includegraphics[width=1\linewidth]{}
%     \end{subfigure}\\
%     \vspace{-3mm}
%     \begin{subfigure}{.8\textwidth}
%         \includegraphics[width=1\linewidth]{}
%     \end{subfigure}    
%     \caption{Two different realizations (top and bottom) of TS on the Branin-Currin test function. Black points indicate the predicted (per GP posterior mean) Pareto frontier (left) and the corresponding Pareto set (right). Green circle (right) is the maximin point from the predicted Pareto set. Notice that in the top, the maximin point explores, in the bottom it exploits.}
%     \label{fig:TS}
% \end{figure}

% \begin{figure}[htb!]
%     \centering
%     \begin{subfigure}{.5\textwidth}
%         \centering
%          \includegraphics[width=1\linewidth]{}
%     \label{sfig:TS_final}
%     \end{subfigure}\\
%     \begin{subfigure}{.5\textwidth}
%         \centering
%          \includegraphics[width=.35\linewidth]{}
%     \label{sfig:TS_acq}
%     \end{subfigure}
%    \label{fig:TS}
%    \caption{Demo of POTS on the Branin-Currin function. Top left: predicted PF per GP posterior mean (black crosses) and true PF (red circles). Top middle: Training (blue) and acquisition (black) points in $\X$. Top right: true versus predicted hypervolume per the GP posterior mean. Bottom: same as top middle, but with the true Pareto set (red circles) overlaid.}
% \end{figure}

Inspired from TS, \qpots~chooses points according to the probability that they are Pareto optimal. That is, we choose $\x$ according to the probability
\begin{equation}
    \begin{split}
    p_{\X^*}(\x) = \int &\delta \left(\x - arg\max_{\x \in \X}~\{Y_1(\x),\ldots,Y_K(\x)\}\right) \\
     & p(Y_1|\D^1_n)\ldots p(Y_K|\D^K_n)dY_1\ldots dY_K,
    \end{split}
\end{equation}
where the integral is over the support of the joint distribution of all posterior GPs.  In practice, this is equivalent to drawing sample paths from the $K$ posterior GPs $Y_i(\cdot, \omega),~i=1,\ldots,K$, and choosing the next point(s) from its Pareto set $\x_{n+1} \in \argmax_{\x \in \X} \{Y_1(\x, \omega), \ldots, Y_K(\x, \omega)\}$. We use the ``reparametrization trick'' to sample from the posterior (again we exclude notations for individual objectives): $Y(\cdot, \omega) = \mu_n(\cdot) + {\mbf{\Sigma}_n}^{1/2}(\cdot) \times Z(\omega)$, where $Z$ is a standard multivariate normal random variable. Then, we solve the following cheap multiobjective optimization problem
\begin{equation}
  X^* = \argmax_{\x \in \X} \{Y_1(\x, \omega), \ldots, Y_K(\x, \omega)\},
  \label{eqn:moo_gp}
\end{equation}
that can be solved using any method; we choose evolutionary approaches, e.g., the NSGA-II~\cite{deb2002fast}, so that we keep the best of both worlds: accuracy of evolutionary approaches and sample efficiency of MOBO. 
The next batch of $q$ points is then chosen as $\x_{n+1:q} \in X^*.$
That is, we select points from the predicted Pareto set of the sample paths of each posterior GP. 
In principle, any acquisition chosen from $X^*$ is Pareto optimal per the drawn sample paths. However, to promote diversity of candidates, we ensure that the acquisition $\x_{n+1}$ has the largest \emph{maximin distance}~\cite{johnson1990minimax,sun2019synthesizing} to the previously observed points. This way, there is an automatic balance between exploration and exploitation--see \Cref{fig:bc_demo} for an illustration.
Let $\gamma(\cdot, \cdot): \mbb{R}^d \times \mbb{R}^d \rightarrow \mbb{R}_{+}$ denote the Euclidean distance between two points in $\X$. Then, we pick a batch of $q$ new acquisitions according to the following sequential maximin optimization problems
\begin{equation}
    \begin{split}
    \x_{n+1} =& \argmax_{{\x}^* \in X^*} \min_{\x_i\in X_n} \gamma({\x}^*, \x_i) \\
    \x_{n+2} =& \argmax_{{\x}^* \in X^*} \min_{\x_i\in X_n \bigcup \x_{n+1}} \gamma({\x}^*, \x_i) \\
    \vdots& \\
    \x_{n+q} =& \argmax_{{\x}^* \in X^*} \min_{\x_i\in X_n \bigcup \{ \x_{n+1}, \ldots, \x_{n+q-1} \} } \gamma({\x}^*, \x_i).
    \end{split}
    \label{eqn:maximin}
\end{equation}
\Cref{eqn:maximin} is a seemingly hard optimization problem, but can be solved quite easily and efficiently. This is because, in practice, the solution of \eqref{eqn:moo_gp} returned by an evolutionary algorithm based optimizer is a point set with finite cardinality. Therefore, what we end up with is the computation of an $N^* \times n$ pairwise distance matrix ($N^* = |X^*|$) and rank-ordering them to according to their column-wise minima. An ascending sort of the resulting vector allows for selecting the $q$ batch acquisition as the last $q$ elements of the sorted vector.

\subsection{Extension to constrained problems}
 Our method seamlessly extends to constrained multiobjective problems and the key step is explained as follows. Let there be a total of $C$ constraints $\{ c_1, \ldots, c_C\}$, and $\mcl{I} \cup \mcl{E} = [C]$. Further, let $Y_{k+i\in \mcl{I}}$ and $Y_{k+i\in \mcl{E}}$ denote the posterior GP of the $i$th inequality constraint and equality constraint functions, respectively. This way, we fit a total of $K+C$ posterior GPs $Y_1, \ldots, Y_K, Y_{K+1}, \ldots, Y_{K+C}$.
 For all $i \in [C]$, we define the constrained \qpots~ to choose points according to
\begin{equation}
\begin{split}
p_{\X^*}(\x) = \int \delta &\left(\x - \argmax_{\x \in \X}
\left[ \{Y_1(\x),\ldots,Y_K(\x)\} \times \right. \right. \\
& 
\left. \left. 
\mathbbm{1}_{\{\bigcap_{i} Y_{k+i \in \mcl{E}}(\x)= 0 \} }\times \mathbbm{1}_{\{\bigcap_i Y_{k+i \in \mcl{I}}(\x)\geq 0 \}}\right]\right) \\
& p(Y_1|\D_n)\ldots p(Y_K|\D_n)dY_1\ldots dY_K,
\end{split}
\end{equation}
where $\mathbbm{1}_{\bigcap_i Y_{k+i \in \mcl{I}}(\x)\geq 0}$ and $\mathbbm{1}_{\bigcap_i Y_{k+i \in \mcl{I}}(\x) = 0}$ are the indicator functions for joint feasibility with inequality and equality constraints, respectively, that take a value of $1$ when feasible and $0$ otherwise. This 
is equivalent to drawing samples from the GP posteriors, filtering them out by the indicator functions, and finding a Pareto optimal solution set per the posterior GP sample paths. 
\begin{equation}
    \begin{split}            
  X^* = \argmax_{\x \in \X} &\left[ \{Y_1(\x, \omega), \ldots, Y_K(\x, \omega)\}\times \right.\\ 
  & \left. \mathbbm{1}_{\{\bigcap_{i} Y_{k+i \in \mcl{E}}(\x)= 0 \} }\times \mathbbm{1}_{\{\bigcap_i Y_{k+i \in \mcl{I}}(\x)\geq 0 \}}\right],
  \label{eqn:moo_gp_cons}
  \end{split}
\end{equation}
Then, $q$ maximin candidates are chosen similar to what is done in the unconstrained case. This way, handling constraints in \qpots~fits seamlessly into the unconstrained framework with minimal changes.
 An illustration of the constrained Branin-Currin test case is shown in \Cref{fig:cons_bc_demo}; notice that \qpots~is able to choose an acquisition within the feasible region.
\begin{figure}
    \centering
\includegraphics[width=1\linewidth]{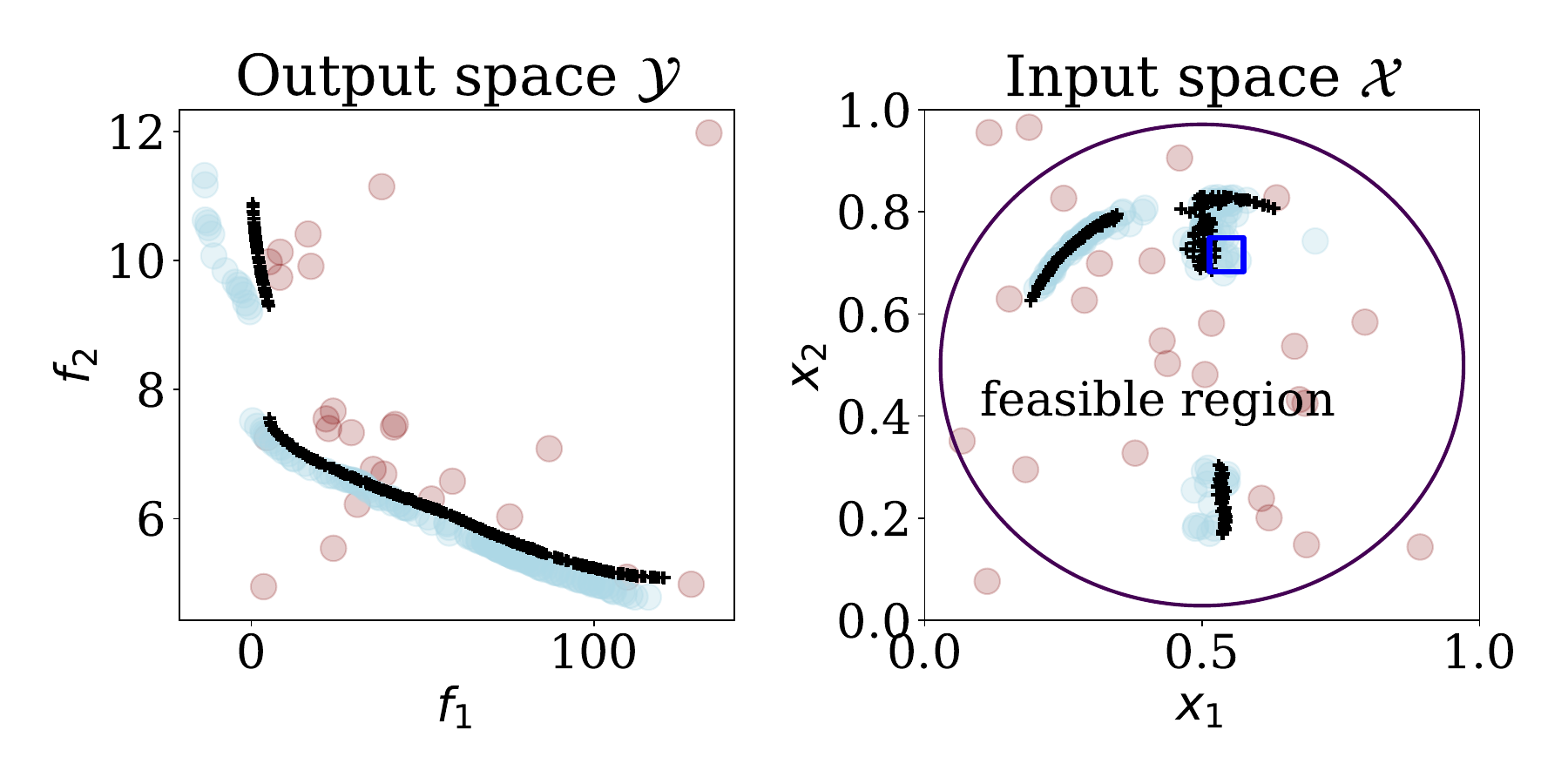}
    \caption{Illustration of \qpots~in the constrained setting. Brown circles are GP training points, blue circles are $X^*$, black crosses are the true Pareto frontier/set, black line is the constraint boundary and blue square is the \qpots~acquisition. Left: output space, right: input space. }
    \label{fig:cons_bc_demo}
\end{figure}

A natural question that could arise in the constrained case is \emph{what if $X^* = \empty$ (that is, an empty set) or $|X^*| < q$?}. This is particularly likely when the problem has one or more equality constraints. In such situations, \qpots~simply does not make any acquisition and, instead, solves \Cref{eqn:moo_gp_cons} and tries again. This process is repeated until $q \geq |X^*|$. As long as the population size in NSGA-II is large enough, in principle, this approach should eventually result in $q \geq |X^*|$. However, in the authors' experience so far, this issue has never occurred and $|X^*|$ is always non-empty in the first attempt.

 \subsection{Nystr\"{o}m approximation on active candidate set}
\label{ss:nystrom}
We illustrate how our approach can improve scalability in high dimensions with the Nystr\"{o}m approximation with GP posteriors. 
% However, \qpots~generalizes to any posterior (e.g., DGP or multitask GP) that admits the reparametrization technique. 
We begin by writing the reparametrization technique for a GP posterior:
\[
\begin{split}
Y(X) =& \mu(X) + \left[ k(X, X_n) k(X_n, X_n)^{-1} k(X_n, X) \right]^{1/2} Z \\
=& \mu(X) + \Sigma(X)^{1/2} Z,
\end{split}
\]
where $\Sigma(X) = k(X, X_n) k(X_n, X_n)^{-1} k(X_n, X)$ is an $N \times N$ posterior covariance matrix, that we also write as $\Sigma_{NN}$; the main computational bottleneck is in computing the square-root of $\Sigma(X)$ which costs $\mcl{O}(N^3)$. We now choose a subset $X_m \subset X$, $m \ll N$, to write the following, augmented $(N+m) \times (N+m)$ matrix
\[\Sigma_{(N+m)(N+m)} = \begin{bmatrix}
    \Sigma_{mm} & \Sigma_{mN} \\
    \Sigma_{Nm} & \Sigma_{NN}
\end{bmatrix},\]
where $\Sigma_{mm}$ is the intersection of the $m$ rows and $m$ columns of $\Sigma_{NN}$ corresponding to $X_m$, and $\Sigma_{mN} = \Sigma_{Nm}^\top$ are the $m$ rows of $\Sigma_{NN}$ corresponding to $X_m$. Then, we have that
\begin{equation}
    \begin{split}            
    \Sigma(X) =& \Sigma_{NN} \approx \Sigma_{mN} \Sigma_{mm}^{-1} \Sigma_{mN}^\top\\
    =& \left(\Sigma_{mN} \Sigma_{mm}^{-1/2} \right)\left(\Sigma_{mN} \Sigma_{mm}^{-1/2} \right)^\top.
    \end{split}
    \label{eqn:nystrom_squareroot}
\end{equation}
Therefore, we have the Nystr\"{o}m approximation of the posterior covariance matrix square root as $\Sigma(X)^{1/2} \approx  \left(\Sigma_{mN} \Sigma_{mm}^{-1/2} \right).$
% \begin{equation}
%     \Sigma(X)^{1/2} \approx  \left(\Sigma_{mN} \Sigma_{mm}^{-1/2} \right).
%     \label{eqn:nystrom_squareroot}
% \end{equation}
The approximation in \Cref{eqn:nystrom_squareroot} requires an $m\times m$ matrix square root which costs $\mcl{O}(m^3)$ and an additional $\mcl{O}(Nm^2)$ for the matrix multiplication. Therefore, an overall cost of $\mcl{O}(m^3 + Nm^2)$ is much better than $\mcl{O}(N^3)$, provided $m \ll N$. The choice of $X_m$ is an open research question; in this work, we choose them to be the subset of nondominated points in $\{X_n, \mbf{y}_n\}$ which works uniformly well in practice.
We provide more illustration in the supplementary material.

% \paragraph{Conjugate gradients with fast Gauss transform} One way to speed-up computations for \qpots is to approximate the matrix-vector product using fast Gauss transform (FGT) and conjugate gradients~\cite{yang2004efficient}

\begin{algorithm}[tb]
 \caption{\qpots: Batch Pareto optimal Thompson sampling}
 \label{a:method}
\begin{algorithmic}
   \STATE {\bfseries Input:} With data sets $\D_n^{1:K+C}$, fit $K+C$ GP models $\mcl{M}_1$ through $\mcl{M}_{K+C}$
     %\\
     and GP hyperparameters $\bs{\Omega}_1,\ldots,\bs{\Omega}_{K+C}$. \\
     Parameters $B$ (total budget) \\
   \STATE \textbf{Output:} {Pareto optimal solution $\mcl{X}^*,~\mcl{Y}^*$}.
  \STATE \FOR{$i=n+1, \ldots, B$, }
      \STATE {\bf 1.} Sample the posterior GP sample paths: $Y_i(\x, \omega),~\forall i=1,\ldots,K$.
      \STATE  {\bf 2.} {\bf (Optional)}\ Use Nystr\"{o}m approximation for scalability in higher dimensions.
      \STATE {\bf 3.} Solve the cheap multiobjective optimization problem  \Cref{eqn:moo_gp} (or \eqref{eqn:moo_gp_cons} for constrained problems) via evolutionary algorithms. 
      \STATE {\bf 4.} Choose $q$ candidate points from $X^*$ according to \eqref{eqn:maximin}.
      \STATE {\bf 5.} Observe oracles and constraints and append to data set $\D^{1:K+C}_i$.
      \STATE {\bf 6.} Update GP hyperparameters $\bs{\Omega}_1,\ldots,\bs{\Omega}_{K+C}$
    \ENDFOR
\end{algorithmic}
\end{algorithm}

\section{Theoretical properties}
\label{sec:theory}
We now provide theoretical guarantees on \qpots. We show that the absolute difference between $f_i$ and its GP posterior mean, $\forall \x\in\X$, is asymptotically consistent. Then, we show that the predicted Pareto frontier asymptotically converges to the true Pareto frontier. Our proofs depend on two key assumptions. First, we place regularity assumptions on $f_i,~\forall i$. Then, we assume that our inner (multiobjective) optimization solution, via evolutionary techniques, is solved exactly. We now proceed to present the key theoretical results.

\begin{assumption}[{\bf Exact knowledge of $X^*$}]
We assume that the inner multiobjective optimization problem $\argmax_{\x \in \X} \{Y_1(\x, \omega), \ldots, Y_K(\x, \omega)\},$ is solved exactly and the true $X^*$ is known.  
\label{ass:ass1}
\end{assumption}
Assumption \ref{ass:ass1} is quite reasonable in practice since evolutionary algorithms come with tuning parameters e.g., population size and number of generations, which can be made arbitrarily high to improve accuracy of $X^*$; we provide an empirical illustration in \Cref{fig:ngen} and \Cref{fig:npop}.

\begin{assumption}[{\bf Lipschitz}]
We assume that the sample paths drawn from the GP prior on $f$ are twice continuously differentiable. Such a GP is achieved by choosing the covariance kernel to be the squared-exponential or from the Matérn class with $\nu=5/2$
~\citep[Theorem 5]{ghosal2006posterior}. Then, the following holds (for $L \in \mbb{R}_+$): $\mP \left[ |f(\x) - f(\x')| > L \|\x - \x'\| \right] \leq e^{-L^2 / 2}.$
\label{ass:lipschitz}
\end{assumption}

\begin{lemma}
Let Assumption \ref{ass:lipschitz} hold. Let $\{\x_i\},~i=1,\ldots,n$ be a sequence of points selected via the \qpots~method according to \eqref{eqn:maximin}. Then, $\lim_{n \rightarrow \infty}~\sup_{\x \in \X} \sigma^2_n(\x) = 0$.
\label{lm:gpvariance}
\end{lemma}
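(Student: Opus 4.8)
This is a uniform posterior-consistency statement, so I would split it into two steps: (i) show that the \emph{fill distance} of the design,
\[
h_n \;=\; \sup_{\x\in\X}\ \min_{1\le i\le n}\gamma(\x,\x_i),
\]
converges to $0$ as $n\to\infty$; and (ii) bound $\sup_{\x\in\X}\sigma_n^2(\x)$ by a quantity that vanishes once $h_n$ is small (and, in the noisy case, once each neighbourhood of $\X$ contains many design points). Step (ii) is essentially the classical consistency of Gaussian process regression under a space-filling design; the part that is specific to \qpots~is Step (i), because the rule \eqref{eqn:maximin} selects $\x_{n+1}$ from the random, iteration-dependent Pareto set $X^*$ rather than from all of $\X$. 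I expect Step (i) to be the main obstacle.

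For Step (i): let $d_{n+1}=\min_{1\le i\le n}\gamma(\x_{n+1},\x_i)$ be the separation achieved at iteration $n+1$. If the maximization in \eqref{eqn:maximin} ran over all of $\X$, then $d_n$ would be nonincreasing with $d_n=h_{n-1}$, and since $\X$ is bounded (indeed compact, as is standard here) only finitely many points can be pairwise $\varepsilon$-separated for any fixed $\varepsilon>0$; hence $d_n\to0$, i.e., $h_n\to0$. To handle the restriction to $X^*$, I would use that under \Cref{ass:lipschitz} the kernel (squared-exponential or Matérn-$5/2$) is universal, so each posterior GP $Y_k(\cdot,\omega)$ has full support in $C(\X)$, and that the unconstrained maximizer of any single $Y_k(\cdot,\omega)$ always belongs to $X^*$. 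Consequently, whenever a point $\x_0$ is not yet $\varepsilon$-covered, the posterior GP has not degenerated near $\x_0$ and so its global maximizer falls in $B(\x_0,\varepsilon)$ with conditional probability bounded below by a constant depending only on $(\x_0,\varepsilon)$; a Borel--Cantelli / zero--one type argument over a countable dense subset of $\X$ then gives $h_n\to0$ almost surely. (If one instead augments the maximin candidate pool with a fixed space-filling sequence, or simply takes $X^*$ to be $\varepsilon$-dense in $\X$ as is effectively the case with a large NSGA-II population, the clean packing argument of the previous sentence applies directly.)

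Given $h_n\to0$, Step (ii) uses monotonicity of the posterior variance in the data. In the noiseless case ($\tau^2=0$), conditioning on only the nearest design point $\x_{j}$ to $\x$ already yields $\sigma_n^2(\x)\le k(\x,\x)-k(\x,\x_{j})^2/k(\x_{j},\x_{j})$, a continuous function of $\gamma(\x,\x_{j})\le h_n$ that vanishes at $0$; uniform continuity of $k$ on the compact set $\X\times\X$ (from the explicit kernels in \Cref{ass:lipschitz}) makes the bound uniform in $\x$, so $\sup_\x\sigma_n^2(\x)\to0$. In the noisy case a single neighbour is insufficient, but monotonicity of $h_n$ forces $|X_n\cap B(\x,\delta)|\to\infty$ for every $\x$ and every $\delta>0$; substituting such a cluster into \eqref{e:GP} and invoking sample-path continuity (\Cref{ass:lipschitz}) bounds the latent posterior variance at $\x$ by a quantity that is made arbitrarily small by first choosing $\delta$ small (controlling the spatial variation of $k$) and then $n$ large (so the cluster is large, killing the $O(\tau^2/|X_n\cap B(\x,\delta)|)$ term); a finite covering of $\X$ upgrades this to the uniform, almost-sure conclusion. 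Thus \Cref{ass:lipschitz} enters only through continuity/universality of $k$ and continuity of the sample paths, while the \qpots~selection rule enters only through $h_n\to0$.
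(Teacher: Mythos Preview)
Your plan is sound and considerably more careful than the paper's own argument, which takes a quite different route. The paper proceeds heuristically: it notes that the maximin rule never repeats a point, then simply asserts that ``when $n=\infty$, $\x$ is likely very close to one of the points $\x_i$'' and computes the limit of $\sigma_n^2(\x)$ by treating $\mbf{k}_n$ as if it equalled a column of $\mbf{K}_n$ (so that $\mbf{k}_n^\top\mbf{K}_n^{-1}=\mbf{e}_i^\top$ and the variance collapses). Non-repetition in a continuum does not by itself give density, the limits are taken on objects whose dimensions grow with $n$, and the noise term $\tau^2\mbf{I}$ is silently dropped, so the paper's argument is really the noiseless case with the fill-distance step taken on faith. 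Your decomposition makes precisely that step explicit: Step~(i) is the missing density argument, and Step~(ii) is the clean variance bound with a separate treatment of the noisy case that the paper does not attempt. The one place your sketch should be tightened is the Borel--Cantelli reasoning in Step~(i): knowing that the global maximizer of a single $Y_k$ lands in $B(\x_0,\varepsilon)$ (hence lies in $X^*$) does not force the \emph{maximin} selection to land there, only that the maximin value is at least the separation achieved by that maximizer; centring the ball at a $2\varepsilon$-uncovered point makes this an $\varepsilon$-packing increment whenever the event fires, and then compactness finishes the argument as you intend. Your fallback (augmenting $X^*$ with a fixed space-filling set, or taking the NSGA-II population as $\varepsilon$-dense) yields the conclusion with no probabilistic work and is, in effect, what the paper is implicitly assuming.
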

% \begin{proof}[Proof of Lemma 3.3]
% W.l.o.g, let $k(\x, \x) = 1$. Recall that the posterior variance of the GP is given by
% \[ \sigma_n^2(\x) = k(\x, \x) - \mbf{k}_n^\top \mbf{K}_n^{-1}\mbf{k}_n.\]
% Note that as $n \rightarrow \infty$, then $\mbf{k}_n \rightarrow \mbf{K}^{:,i}_n$, where by $\mbf{K}^{:,i}_n$ we mean the $i$th column of $\mbf{K}_n$, $i \in [n]$, and therefore $\mbf{k}_n^\top \mbf{K}_n^{-1} = \mbf{e}_i$, where $\mbf{e}_i$ is an $n$-vector with $1$ at the $i$th location and $0$ elsewhere. Then
% \[ \lim_{n \rightarrow \infty} \sigma_n^2(\x) = 1 - \mbf{K}^{:,i~\top}_n \mbf{e}_i = 1 - k(\x_i, \x_i) = 0.\]
% \end{proof}

Using the aforementioned assumptions and Lemma \ref{lm:gpvariance}, we first show that the vector of posterior GPs $\mbf{Y}=[Y_1,\ldots,Y_K]^\top$ converges to the vector of true objectives $\bs{f}=[f_1,\ldots,f_K]^\top$ with high probability.

\begin{theorem}[{\bf Convergence with high probability}]
\label{thm:ass_consistency}
Let $\delta \in (0, 1)$ and Assumption \ref{ass:lipschitz} hold with $L^2 = 2 \log \left(\f{2}{\delta}\right)$.  Let $\tilde{\X}$ denote a discretization of $\X$ with $|\tilde{\X}| < \infty$, where we evaluate the oracles $f_i$. Let $\epsilon \geq 0$, and $a_n = \sqrt{2\log(b_n |\tilde{\X}|)/\delta}$, where $\sum_{n=1}^\infty 1/b_n = 1$. Then the vector of posterior GP objectives converges to $\mbf{f}$ with high probability; that is, the following holds:

\[ \mP \left[\forall \x \in \X, ~ \lim_{n \rightarrow \infty}~ \|\mbf{f}(\x) - \mbf{Y}^{n-1}(\x)\|_\infty \leq \epsilon \right] > 1 - \delta. \]
\end{theorem}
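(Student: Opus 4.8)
The plan is to work objective-by-objective and then take a union bound over the $K$ objectives (which is harmless since $K$ is finite). Fix an index $i$. The key tool is the standard Gaussian concentration inequality for a GP posterior: for a single point $\x$, conditional on $\D_n^i$, the quantity $Y_i^{n-1}(\x)$ is Gaussian with mean $\mu_{n-1}^i(\x)$ and variance $\sigma_{n-1}^{i,2}(\x)$, and moreover (this is the part that needs the Lipschitz/regularity Assumption \ref{ass:lipschitz} and a UCB-style argument, e.g.\ along the lines of \cite{srinivas2009gaussian}) the true function value $f_i(\x)$ lies within $a_n\,\sigma_{n-1}^i(\x)$ of $\mu_{n-1}^i(\x)$ with probability at least $1 - 1/(b_n|\tilde\X|)$. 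Taking a union bound over the finite set $\tilde\X$ and then over $n \in \N$ using $\sum_n 1/b_n = 1$ gives
\[
\mP\!\left[\forall n,\ \forall \x \in \tilde\X,\ |f_i(\x) - \mu_{n-1}^i(\x)| \le a_n\,\sigma_{n-1}^i(\x)\right] \ge 1 - \delta/K,
\]
(with the constant in $a_n$ absorbing the $\log K$). The Lipschitz assumption is what lets us pass from the discretization $\tilde\X$ back to all of $\X$: on a fine enough net the Lipschitz bound controls $|f_i(\x) - f_i(\x')|$ and the analogous bound for the posterior mean, so the event above extends (up to an arbitrarily small slack) to all $\x \in \X$.

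Next I would invoke Lemma \ref{lm:gpvariance}: since the \qpots~acquisitions are chosen by the maximin rule \eqref{eqn:maximin}, we have $\sup_{\x\in\X}\sigma_{n}^{i,2}(\x) \to 0$ as $n \to \infty$. Hence on the high-probability event, $|f_i(\x) - \mu_{n-1}^i(\x)| \le a_n \sup_{\x}\sigma_{n-1}^i(\x)$, and provided the schedule $b_n$ is chosen so that $a_n$ grows slower than $\sigma_{n-1}^{-1}$ shrinks — e.g.\ $a_n$ is polylogarithmic in $n$ while Lemma \ref{lm:gpvariance} gives a polynomial rate for the variance — the right-hand side tends to $0$. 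Therefore $\lim_{n\to\infty} |f_i(\x) - \mu_{n-1}^i(\x)| = 0$ uniformly in $\x$ on this event; in particular it is eventually $\le \epsilon$ for any $\epsilon \ge 0$. Since $\|\mbf{f}(\x) - \mbf{Y}^{n-1}(\x)\|_\infty = \max_i |f_i(\x) - Y_i^{n-1}(\x)|$ and $Y_i^{n-1}(\x)$ concentrates around $\mu_{n-1}^i(\x)$ with the width again governed by $\sigma_{n-1}^i(\x) \to 0$, the same limit holds for $\mbf{Y}^{n-1}$ in place of the posterior mean. A union bound over $i=1,\dots,K$ collects the $K$ events of probability $1-\delta/K$ into one event of probability $> 1-\delta$, which is the claim.

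The step I expect to be the main obstacle is making the ``transfer from $\tilde\X$ to all of $\X$'' fully rigorous while keeping the probability budget at exactly $1-\delta$: one has to choose the discretization $\tilde\X$ carefully (its resolution must shrink with $n$, so really one needs a sequence $\tilde\X_n$), control the accumulated Lipschitz slack across all $n$ simultaneously, and verify that the constant $L^2 = 2\log(2/\delta)$ together with $a_n = \sqrt{2\log(b_n|\tilde\X|/\delta)}$ is exactly what balances the union bounds — this is the delicate bookkeeping that mirrors the information-gain arguments in the GP-UCB literature. A secondary subtlety is that Lemma \ref{lm:gpvariance} only asserts $\sup_\x \sigma_n^2 \to 0$ without an explicit rate, so to conclude that $a_n \sup_\x \sigma_{n-1}^i(\x) \to 0$ I would either need the rate to be extractable from the lemma's proof, or state the convergence as holding along the event without quantifying speed (which suffices for the $\le \epsilon$ conclusion in the theorem as stated).
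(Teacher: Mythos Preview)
Your approach is essentially the paper's: a Gaussian tail bound at each point of $\tilde{\X}$, union bounds over $\tilde{\X}$ and over $n$ via $\sum_n 1/b_n = 1$, the Lipschitz assumption to extend from $\tilde{\X}$ to all of $\X$, Lemma~\ref{lm:gpvariance} to make the variance term vanish, and the reparametrization to pass from $\mu_{n-1}$ to $Y^{n-1}$. The paper fixes $\epsilon = L\,\|\x - \tilde{\x}\|$ for the given (fixed) discretization rather than treating $\epsilon$ as arbitrary, and does not take an explicit union bound over the $K$ objectives; your concern about whether $a_n\sup_\x\sigma_{n-1}(\x)\to 0$ despite $a_n\to\infty$ is well-placed and is a point the paper's proof simply glosses over.
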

Due to \Cref{thm:ass_consistency}, the Pareto frontier computed with the GP posteriors converges to true Pareto frontier with high probability. All proofs are presented in the supplementary material.

\section{Experiments}
\label{s:experiments}
We demonstrate our methodology on several synthetic and real-world experiments varying in input ($d$) and output ($K$) dimensionality. We compare \qpots~against TSEMO (our main competitor), PESMO, JESMO, MESMO (state of the art), qNEHVI, qPAREGO (classical methods), and random (Sobol) sampling. Note that, amongst all competitors, only qNPAREGO and qNEHVI support constraints.

\noindent {\bf Synthetic experiments.}
For the synthetic experiments, we entertain the Branin-Currin, ZDT3 which has disconnected Pareto frontiers, DTLZ3, and DTLZ7 test functions. Additionally, we test on the 
constrained OSY problem~\cite{osyczka1995new} with $C=6$ constraints. Additional experiments are included in the supplementary material.

\noindent {\bf Real-world experiments.}
 For the real-world experiments, we use the Vehicle Safety problem~\cite{tanabe2020easy} and the Penicillin production problem~\cite{liang2021scalable}, and the Car side-impact problem~\cite{jain2013evolutionary}. The Penicillin production problem ($d=7, K=3$) seeks to maximize Penicillin yield while minimizing the time for production as well as the carbon dioxide byproduct emission. The $d=7$ input variables include the culture medium volume, biomass concentration, temperature (in K), glucose substrate concentration, substrate feed rate, substrate feed concentration and the $H+$ concentration~\cite{liang2021scalable}. The Vehicle Safety problem ($d=5, K=3$)~\cite{tanabe2020easy} concerns the design of automobiles for enhanced safety. Specifically, we consider the minimization of vehicle mass, the vehicle acceleration during a full-frontal crash, and the toe-board intrusion---a measure of the mechanical damage to the vehicle during an off-frontal crash, with respect to $5$ design variables which represent the reinforced parts of the frontal frame; further detail are found in~\cite{liao2008multiobjective}. The Car side-impact problem is aimed at minimizing the weight of the car while minimizing the pubic force experienced by a passenger and the
average velocity of the V-pillar responsible for withstanding the impact load. Additionally, as the fourth objective, the maximum constraint violation of $10$ constraints which include limiting values of abdomen load, pubic
force, velocity of V-pillar, and rib deflection is also added~\cite{jain2013evolutionary}.

We provide each experiment a set of $n=10\times d$ seed samples and observations to start the algorithm, chosen uniformly at random from $\mcl{X}$; then they are repeated $10$ times. However, the seed is fixed for each repetition across all acquisition policies to keep comparison fair.  We add a Gaussian noise with variance $\tau^2=10^{-3}$ to all our observations.
% The experiments are run with a high-performance compute node with 256GB memory and $48$-core nodes; repetitions are parallelized across the cores. 
Our metric for comparison is the hypervolume computed via box decomposition~\cite{yang2019efficient}.

Throughout the manuscript, we default to the anisotropic Matern class kernel with $\nu=5/2$.  Our implementation primarily builds on GPyTorch~\cite{gardner2018gpytorch} and BoTorch~\cite{balandat2019botorch}; we leverage 
% PyKeOPs~\cite{charlier2021kernel} to efficiently compute and operate on the covariance matrices as well as pairwise distances and we use 
MPI4Py~\cite{dalcin2021mpi4py} to parallelize our repetitions. We use PyMOO~\cite{blank2020pymoo} to leverage its NSGA-II solver; the population size for all problems are fixed as $100\times d$. The full source code is available at \url{https://github.com/csdlpsu/qpots}.

\Cref{fig:hv_seq} shows the performance comparison for sequential sampling ($q=1$).  Notice that \qpots~is consistently the best
performer, or is amongst the best. We show
\qpots-Nystrom-Pareto (that is $X_m$ chosen as the current Pareto set) for the $d=10$ experiments which shows no loss of accuracy for the DTLZ3 function and, surprisingly, a slight gain in accuracy for the ZDT3 function. In the bottom right, we show the performance on the constrained OSY~\cite{osyczka1995new} problem; qNEHVI failed to complete for this experiment due to memory overflow.
Additional experiments are provided in the supplementary material. Despite the success in the $q=1$ setting, our main benefit is seen in the batch sampling setting which we discuss next.

% in all but the ZLDT1-10D, in which case, it is marginally outperformed by PESMO. More importantly, for $d<10$, \qpots~is an order of magnitude faster than others and is in fact on the same order of magnitude as Sobol. At $d=10$, \qpots~becomes computationally expensive, yet no worse than JESMO. In higher dimensions, a simple strategy to compensate for the computational cost of \qpots~is to sample in batches; this is demonstrated in \Cref{fig:qeffect}--notice that the cost of $q=4$ and $q=1$ are almost indistinguishable. Finally, notice that \qpots-Nystr\"{o}m significantly speeds up \qpots~for a small trade in accuracy.

The demonstration for the batch setting is shown in \Cref{fig:hv_batch}; we present $q=2, 4$ with more in the supplementary material; like in the $q=1$ setting we include \qpots-Nystrom-Pareto for the $d=10$ experiments only.  While \qpots~comfortably wins in all experiments, notice that the difference with TSEMO is more pronounced in the batch setting. We demonstrate the computational efficiency gains of \qpots~in the supplementary material.

\section{Conclusion}
\label{s:conclusions}
\vspace{-3 mm}
We present \qpots,~a sample efficient method to solve expensive constrained multiobjective optimization problems. Classical evolutionary approaches are accurate but not sample efficient. Bayesian optimization is, in general, sample-efficient, but its success relies heavily on solving a difficult inner optimization problem. Our proposed \qpots~relies on sampling from the posterior GPs, and solving a cheap multiobjective optimization on the posterior sample paths with classical evolutionary algorithms such as NSGA-II. This way, we bridge the gap between classical EA's and MOBO and thus, in a sense, get the best of both worlds. \qpots~naturally balances exploration and exploitation, can handle constraints and noisy objectives, and can efficiently handle batch acquisitions; no other existing method has all these features and still outperform \qpots~to the best of our knowledge. Scaling up to higher dimensions is achieved through a Nystr\"{o}m approximation which significantly speeds up the method without any practical loss of accuracy. Crucially, our approach freely applies to any covariance kernel (unlike RFF for stationary kernels).

Looking into the future, one avenue of interest is  marrying \qpots~with nonstationary GPs, such as deep GPs~\cite{damianou2013deep} and input warping~\cite{snoek2014input}, which would open doors for more applications where nonstationarity in the output space is expected to be strong. Additionally, we also intend to explore the use of multitask GPs~\cite{swersky2013multi} to exploit potential underlying correlations between the objectives and constraints. We anticipate that \qpots~would require no structural changes to accommodate nonstationary and multitask GPs.
% We are also interested in establishing the theoretical guarantees with \qpots-Nystrom.
\begin{figure*}
    \centering
    \begin{subfigure}{.25\textwidth}
        \includegraphics[width=1\linewidth]{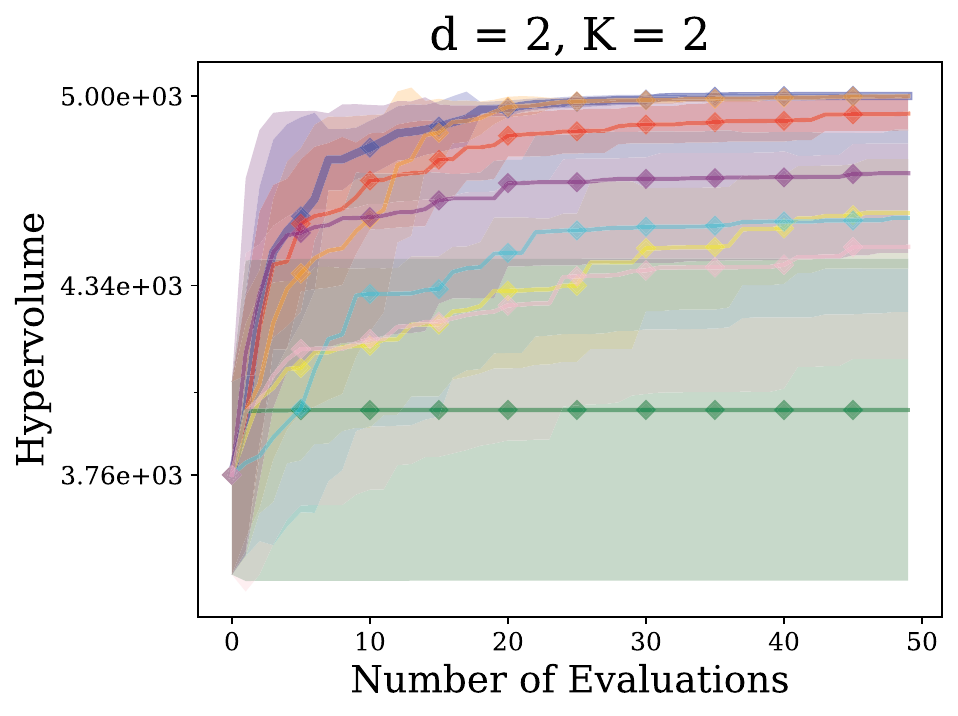}
        \caption{Branin-Currin}
    \end{subfigure}%
    \begin{subfigure}{.25\textwidth}
        \includegraphics[width=1\linewidth]{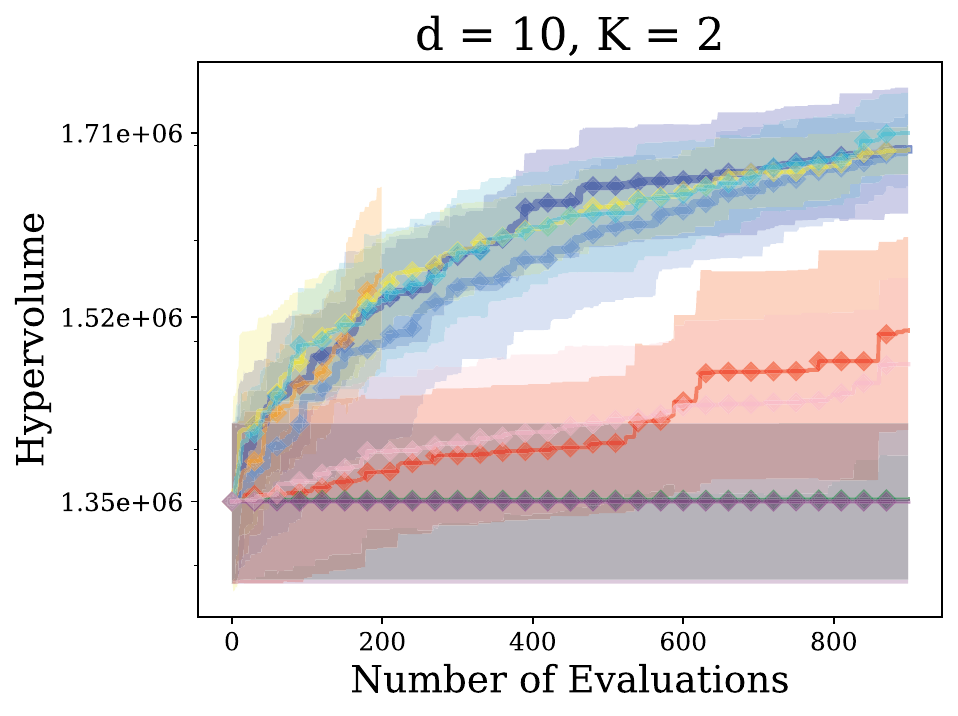}
        \caption{DTLZ3}
    \end{subfigure}%
    \begin{subfigure}{.25\textwidth}
        \includegraphics[width=1\linewidth]{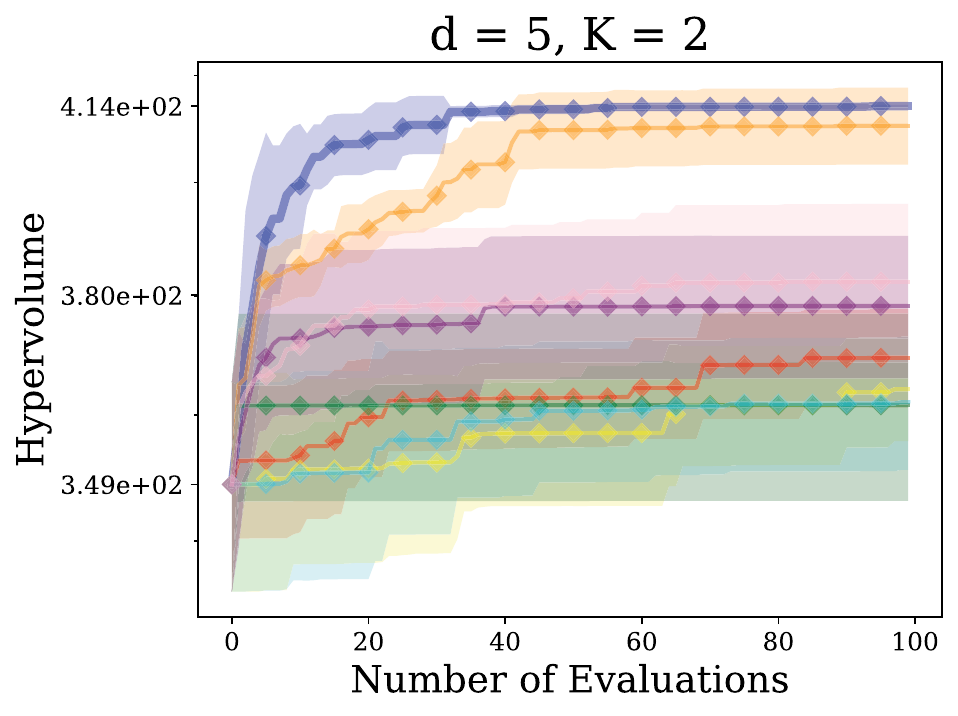}
        \caption{DTLZ7}
    \end{subfigure}%
    \begin{subfigure}{.25\textwidth}
        \includegraphics[width=1\linewidth]{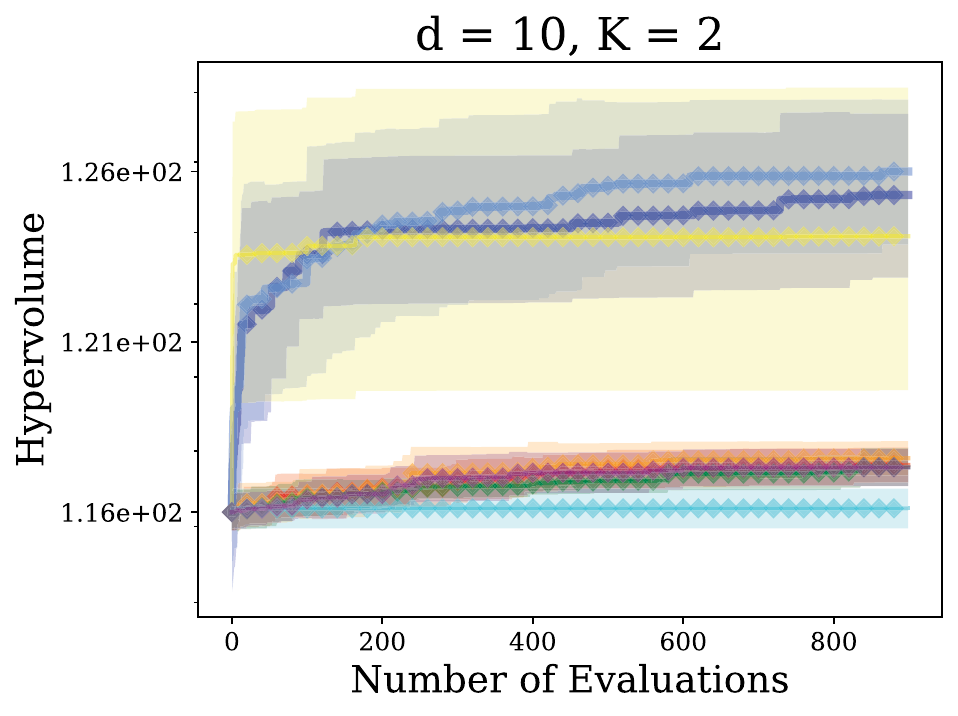}
        \caption{ZDT3}
    \end{subfigure}\\
    \begin{subfigure}{.25\textwidth}
        \includegraphics[width=1\linewidth]{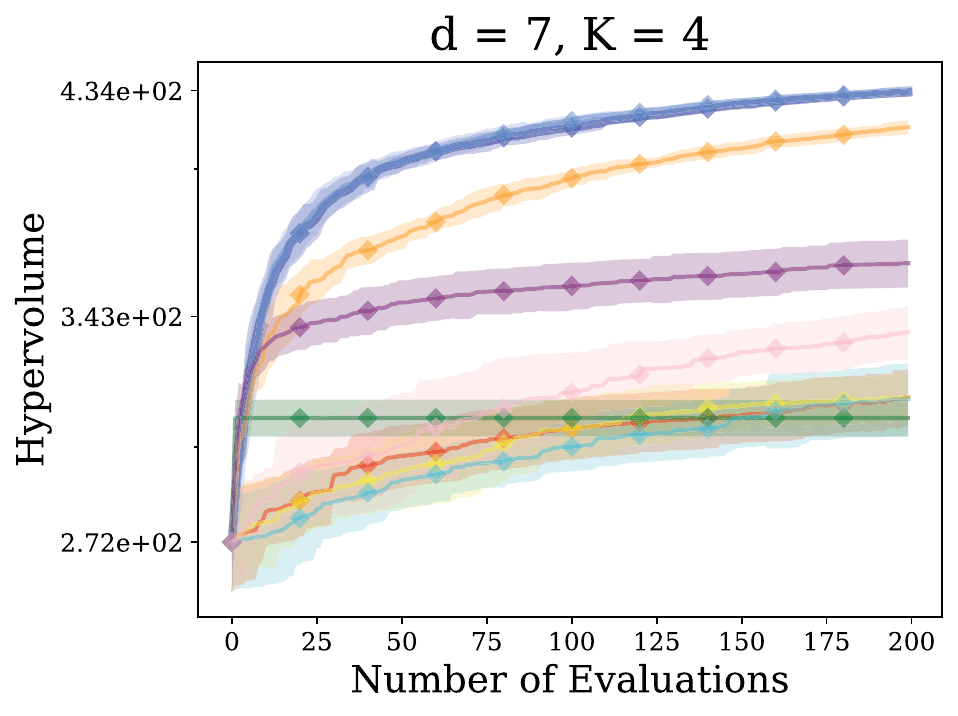}
        \caption{Car side-impact}
    \end{subfigure}%
    \begin{subfigure}{.25\textwidth}
        \includegraphics[width=1\linewidth]{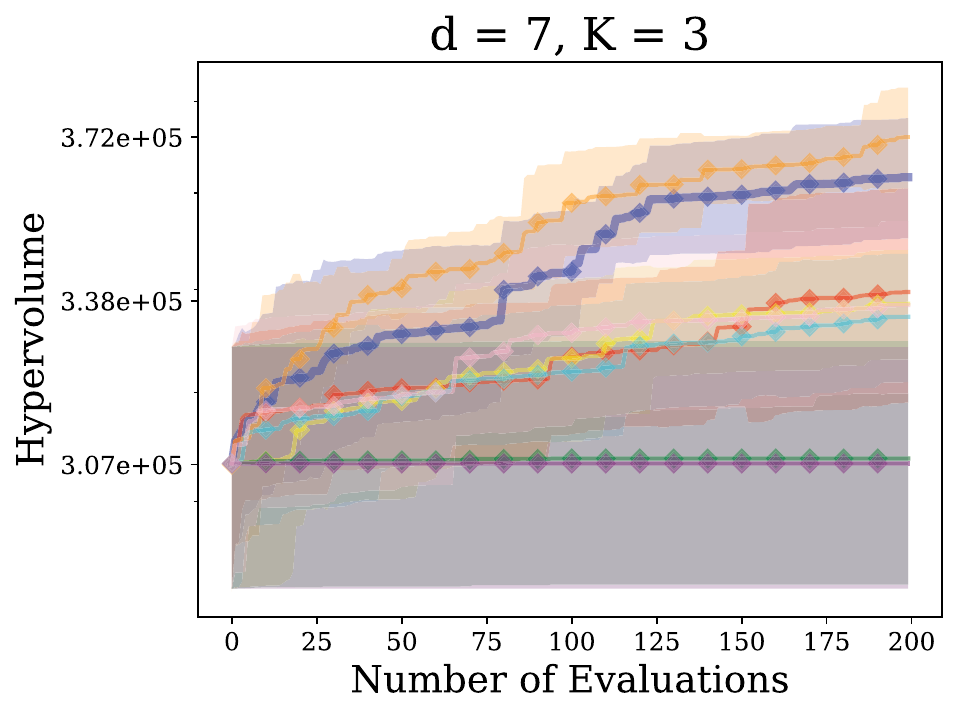}
        \caption{Penicillin production}
    \end{subfigure}%
    \begin{subfigure}{.25\textwidth}
        \includegraphics[width=1\linewidth]{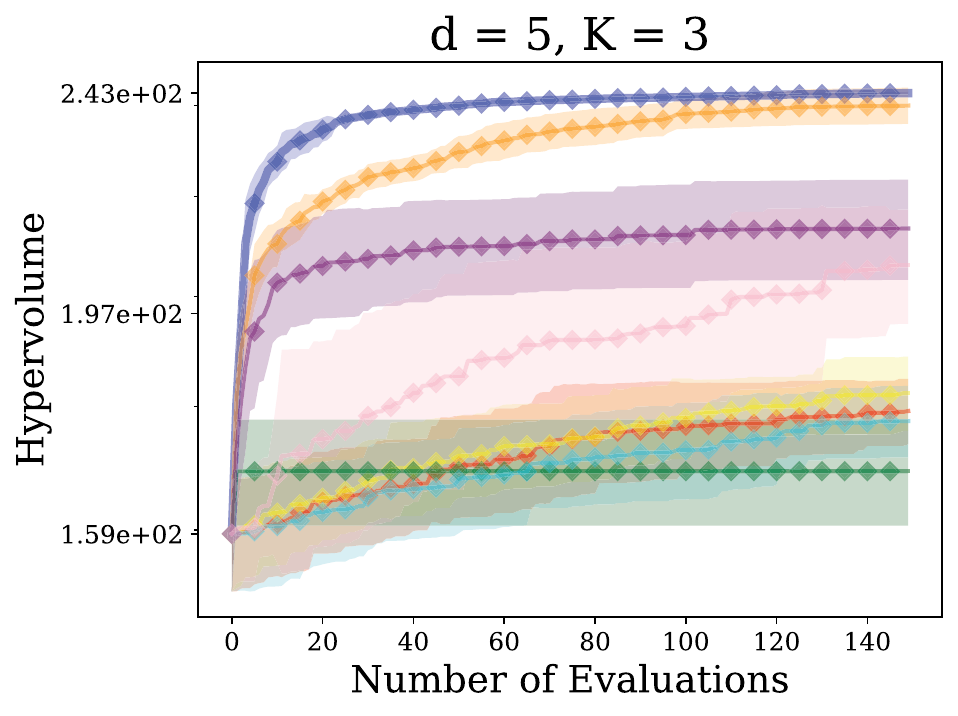}
        \caption{Vehicle design}
    \end{subfigure}%
    \begin{subfigure}{.25\textwidth}
        \includegraphics[width=1\linewidth]{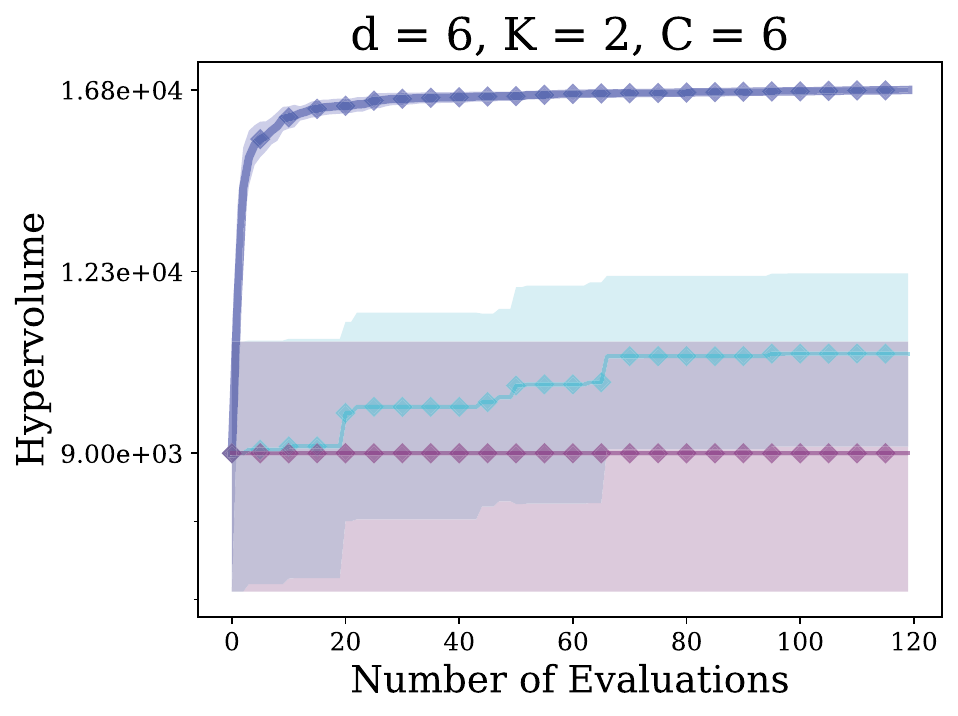}
        \caption{OSY (constrained)}
    \end{subfigure}\\
    % \begin{subfigure}{.25\textwidth}
    %     \includegraphics[width=1\linewidth]{newest_figs/pen_seq.pdf}
    % \end{subfigure}\\
    \begin{subfigure}{1\textwidth}
        \includegraphics[width=1\linewidth]{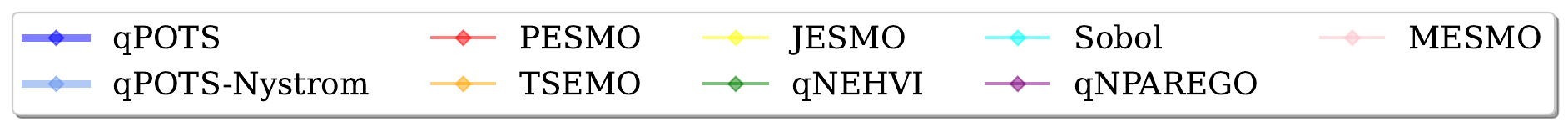}
    \end{subfigure}\\
    \caption{{\bf Sequential acquisition.} Hypervolume Vs. iterations for sequential ($q=1$) acquisition; plots show mean and $\pm 1$ standard deviation out of $10$ repetitions. \qpots~outperforms all competitors or is amongst the best.  Bottom right shows constrained case on the OSY problem~\cite{osyczka1995new}.
    Additional experiments are provided in the supplementary material.}
    \label{fig:hv_seq}
\end{figure*}

\begin{figure*}
    \centering
    \begin{subfigure}{.25\textwidth}
        \includegraphics[width=1\linewidth]{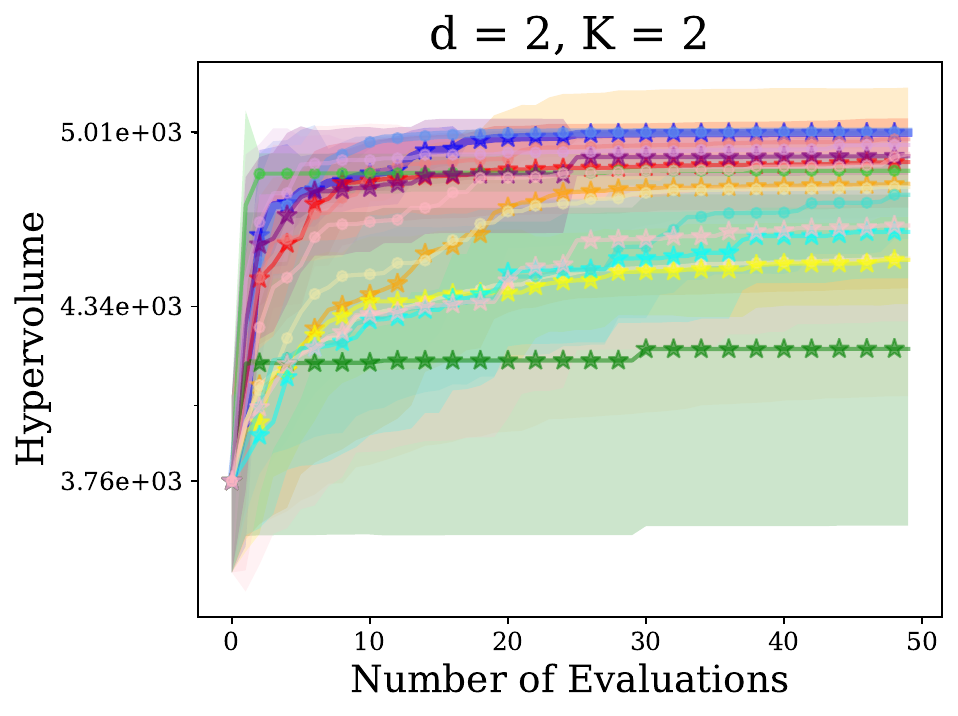}
        \caption{Branin-Currin}
    \end{subfigure}%
    \begin{subfigure}{.25\textwidth}
        \includegraphics[width=1\linewidth]{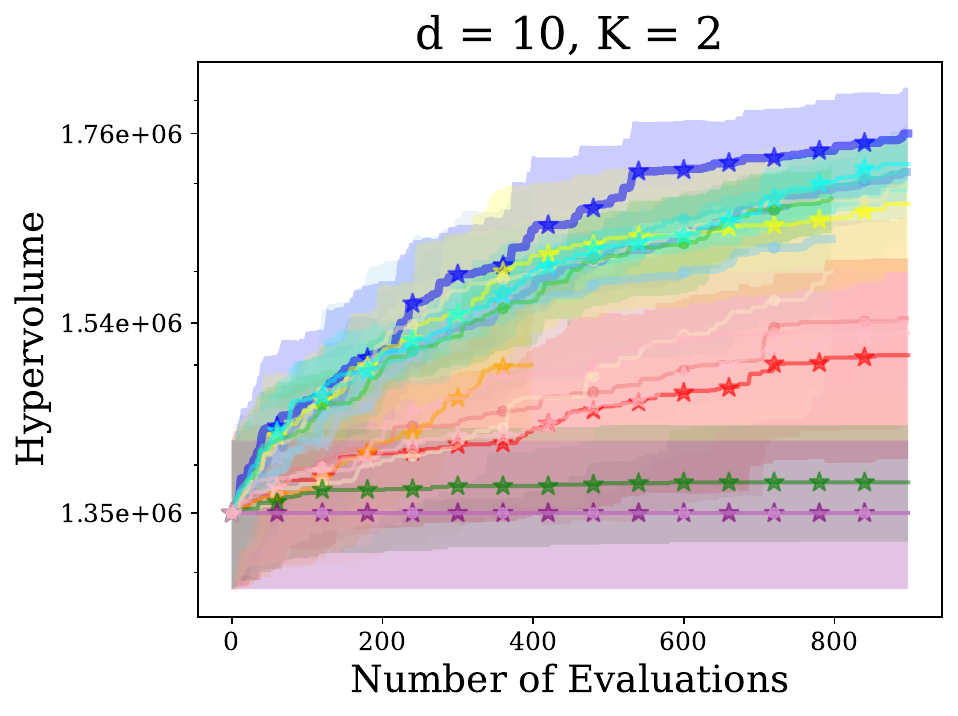}
        \caption{DTLZ3}
    \end{subfigure}%
    \begin{subfigure}{.25\textwidth}
        \includegraphics[width=1\linewidth]{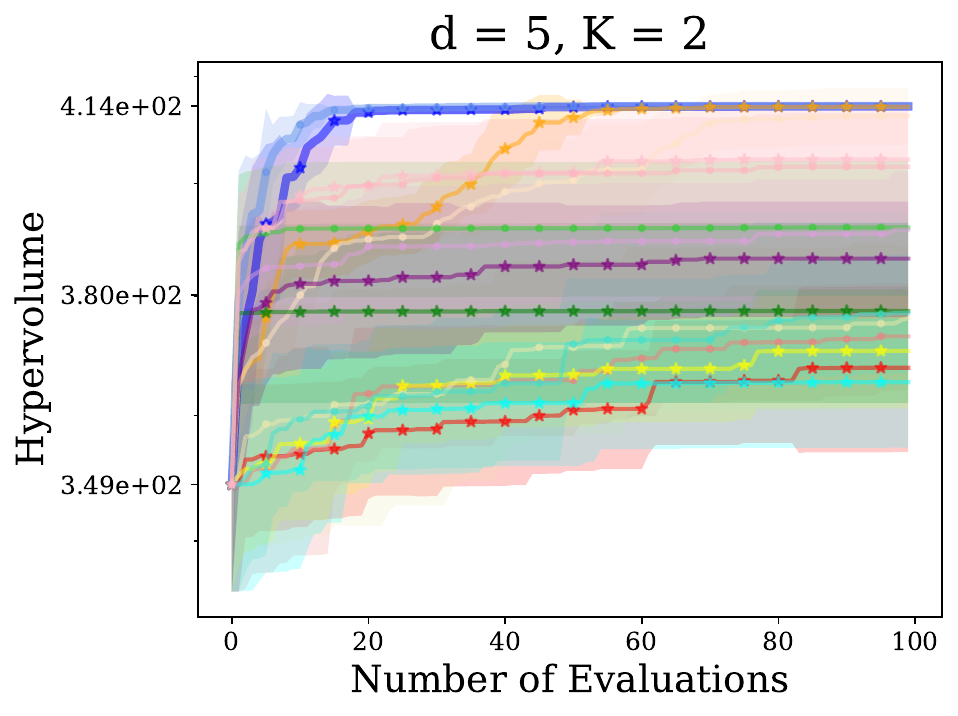}
        \caption{DTLZ7}
    \end{subfigure}%
    \begin{subfigure}{.25\textwidth}
        \includegraphics[width=1\linewidth]{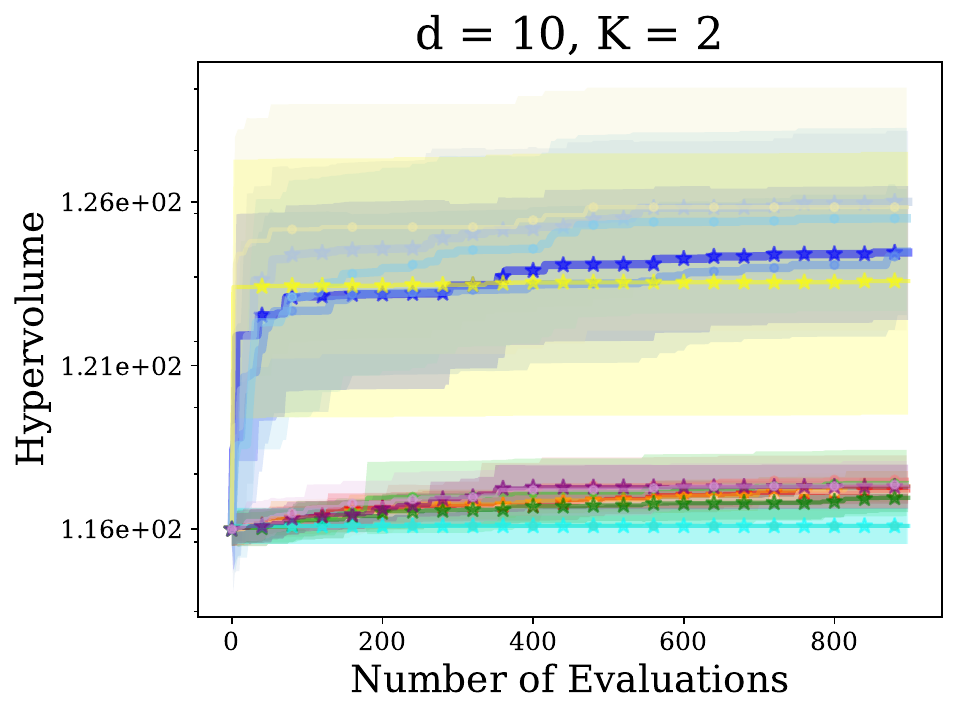}
        \caption{ZDT3}
    \end{subfigure}\\
    \begin{subfigure}{.25\textwidth}
        \includegraphics[width=1\linewidth]{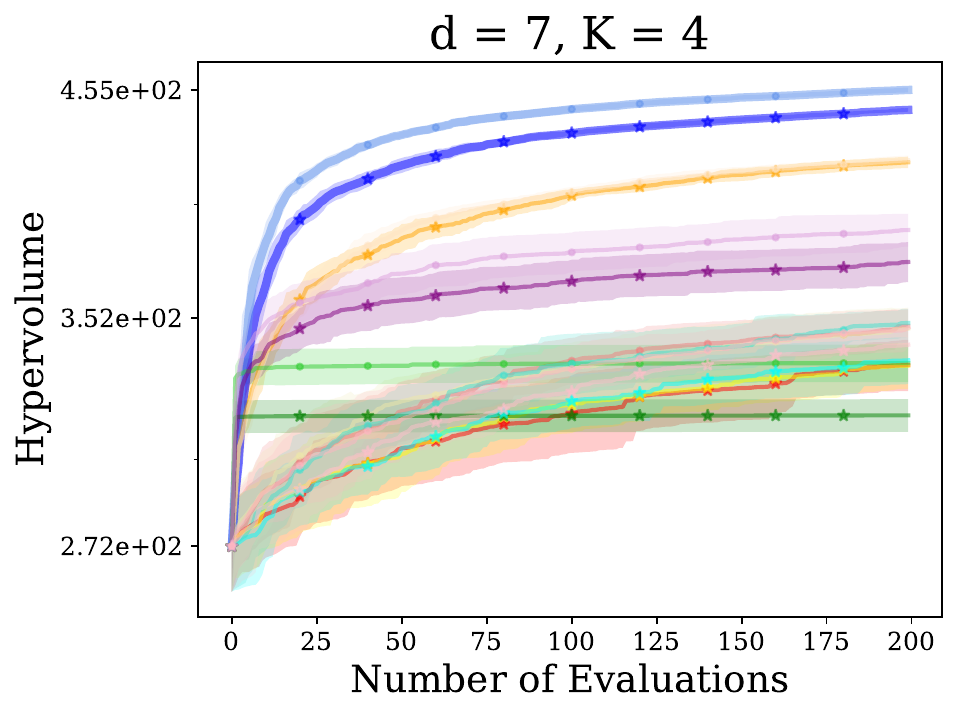}
        \caption{Car side-impact}
    \end{subfigure}%
    \begin{subfigure}{.25\textwidth}
        \includegraphics[width=1\linewidth]{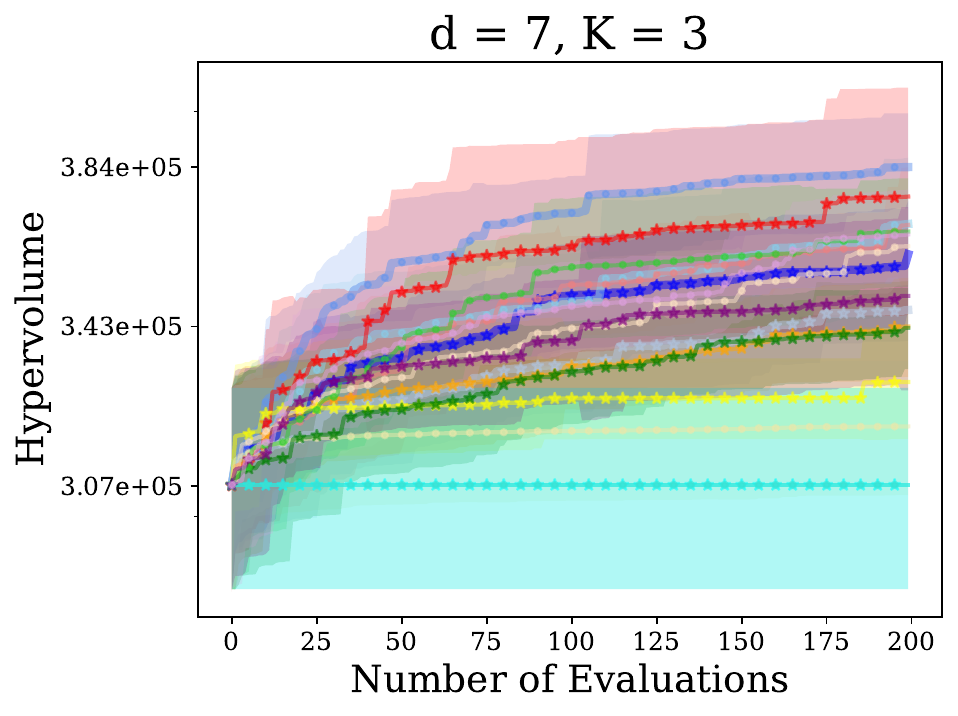}
        \caption{Penicillin production}
    \end{subfigure}%
    \begin{subfigure}{.25\textwidth}
        \includegraphics[width=1\linewidth]{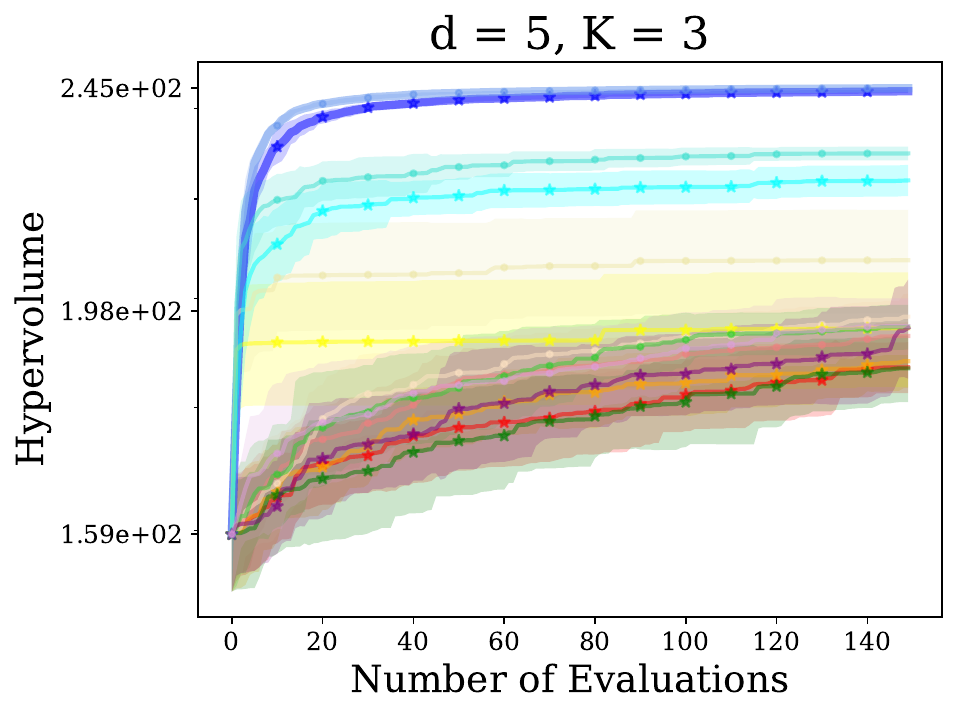}
        \caption{Vehicle design}
    \end{subfigure}%
    \begin{subfigure}{.25\textwidth}
        \includegraphics[width=1\linewidth]{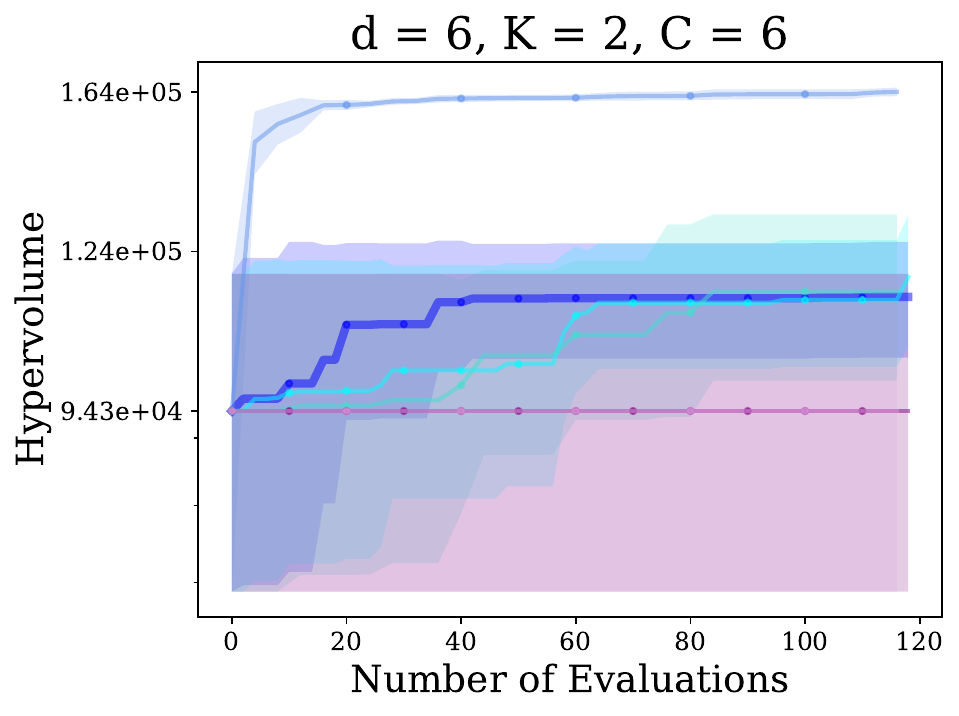}
        \caption{OSY (constrained)}
    \end{subfigure} \\
    \begin{subfigure}{1\textwidth}
        \includegraphics[width=1\linewidth]{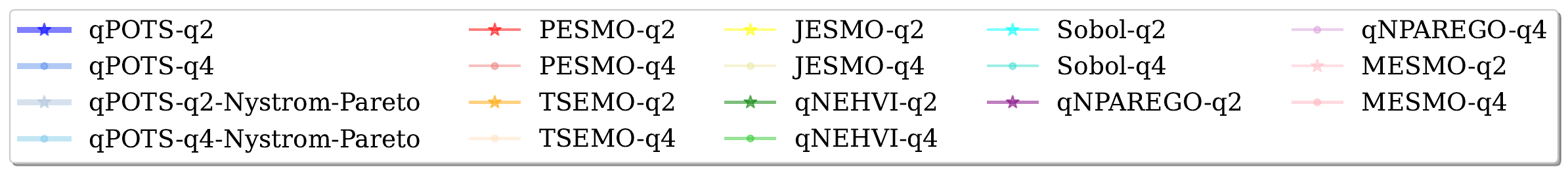}
    \end{subfigure}\\
    \caption{{\bf Batch acquisition.} Hypervolume Vs. iterations for batch ($q>1$) acquisition; plots show mean and $\pm 1$ standard deviation out of $10$ repetitions. \qpots~outperforms all competitors, but the benefit is more pronounced in the batch case. Additional experiments, including constrained problems, are shown in the supplementary material.}
    \label{fig:hv_batch}
\end{figure*}

\clearpage
\section*{Appendix}

\section{Proofs}
\label{sec:proofs}
We now provide theoretical guarantees on \qpots, with the following approach. We show that the absolute difference between $f_i$ and its GP posterior mean, $\forall \x\in\X$, is bounded for finite samples and is asymptotically consistent. Then, we show that the predicted Pareto frontier asymptotically converges to the true Pareto frontier. Our proofs depend on two key assumptions. First, we place regularity assumptions on $f_i,~\forall i$. Then, we assume that our inner (multiobjective) optimization solution, via evolutionary techniques, is solved exactly. We now proceed to present the key theoretical results.

\begin{assumption}[{\bf Exact knowledge of $X^*$}]
We assume that the inner multiobjective optimization problem $\argmax_{\x \in \X} \{Y_1(\x, \omega), \ldots, Y_K(\x, \omega)\},$ is solved exactly and the true $X^*$ is known. In practice, this assumption is reasonable and, to improve accuracy, can be fine-tuned with larger $\tilde{N}$ and $N_\T{gen}$.  
\end{assumption}

\begin{assumption}[{\bf Lipschitz}]
We assume that the sample paths drawn from the GP prior on $f$ are twice continuously differentiable. Such a GP is achieved by choosing the covariance kernel to be four times differentiable~\citep[Theorem 5]{ghosal2006posterior}, for  example, the squared-exponential or Matérn-class kernel. Then, the following holds (for $L \in \mbb{R}_+$): $\mP \left[ |f(\x) - f(\x')| > L \|\x - \x'\| \right] \leq e^{-L^2 / 2}.$
\end{assumption}

\subsection{Proof of Lemma 3.3}

\begin{lemma}
Let Assumption 3.2 hold. Let $\{\x_i\},~i=1,\ldots,n$ be a sequence of points selected via the \qpots~method according to Eq. (6) with $q=1$. Then, $\lim_{n \rightarrow \infty}~\sup_{\x \in \X} \sigma^2_n(\x) = 0$.
\end{lemma}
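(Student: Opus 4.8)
The plan is to prove the stronger fact that the queried points become dense in the (compact) domain $\X$ --- equivalently, that the fill distance $h_n:=\sup_{\x\in\X}\min_{i\le n}\gamma(\x,\x_i)$ tends to $0$ --- and then to transfer this to the posterior variance. The transfer is routine: Assumption 3.2 makes the sample paths, hence $k$, smooth, so $k$ is uniformly continuous on the compact $\X\times\X$; for any $\x$ let $\x_{j(n)}$ be the nearest queried point, and since discarding all but one observation only inflates the posterior variance, $\sigma^2_n(\x)\le k(\x,\x)-k(\x,\x_{j(n)})^2/(k(\x_{j(n)},\x_{j(n)})+\tau^2)$; as $h_n\to0$ this bound tends to $0$ uniformly in $\x$ in the noiseless idealization, and for $\tau^2>0$ one instead uses that $h_n\to0$ is accompanied by an \emph{accumulation} of queries in every neighbourhood, so that repeated near-observations drive the variance to $0$ there too. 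Hence $\sup_{\x\in\X}\sigma^2_n(\x)\to0$, and it remains only to establish $h_n\to0$.

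For that I would argue in two steps. Step 1 (the maximin gain vanishes): let $g_n:=\min_{\x_i\in X_n}\gamma(\x_{n+1},\x_i)$, which by the rule in \eqref{eqn:maximin} equals $\max_{\x^*\in X^*_n}\min_{\x_i\in X_n}\gamma(\x^*,\x_i)$. If $g_{n_k}\ge\epsilon>0$ along a subsequence, then for $j<k$ the point $\x_{n_j+1}$ already lies in $X_{n_k}$, so $\gamma(\x_{n_k+1},\x_{n_j+1})\ge\epsilon$; thus $\{\x_{n_k+1}\}$ is an infinite $\epsilon$-separated subset of the totally bounded set $\X$ --- impossible. So $g_n\to0$, i.e. every point of the current sample-path Pareto set is eventually within $g_n$ of an already-queried point: $\sup_{\x^*\in X^*_n}\mathrm{dist}(\x^*,X_n)\to0$.

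Step 2 (coverage of all of $\X$) is the crux. Suppose $h_n\not\to0$: by compactness there is a ball $B(\bar\x,\rho)$ and a subsequence with $\mathrm{dist}(\bar\x,X_{n_k})\ge\rho$, and then, by monotonicity of the GP variance in the design together with continuity of $k$, each objective has $\sigma^2_{n_k}(\x)\ge c>0$ on $B':=B(\bar\x,\rho/2)$. On $B'$ the posterior sample path $Y_1(\cdot,\omega)$ is therefore a Gaussian field of variance $\ge c$, and by an anti-concentration/Gaussian-comparison argument over a finite net of $\X$ it exceeds, inside $B'$ and with probability bounded below uniformly in $k$, all its values on $X_{n_k}$ and on that net outside $B'$; any such maximizer is non-dominated, hence belongs to $X^*_{n_k}$ (here I use Assumption 3.1: $X^*_{n_k}$ is the exact Pareto set of the drawn sample paths). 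So $X^*_{n_k}$ meets $B'$ with probability bounded away from $0$; a conditional Borel--Cantelli argument then forces this to occur for infinitely many $k$ almost surely, which by Step 1 yields $\mathrm{dist}(\bar\x,X_n)\to0$ --- a contradiction. Hence $h_n\to0$ almost surely.

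The main obstacle is Step 2. A classical sequential maximin design may place the next point anywhere in $\X$, so its fill distance provably vanishes; here the iterates are confined to $X^*_n$, the Pareto set of a \emph{random} sample path, and nothing deterministic stops that set from forever avoiding a part of $\X$. The fix must exploit the GP randomness --- exactly the ``explore when uncertain'' mechanism claimed for \qpots: a region of persistently non-vanishing posterior variance throws up, with probability bounded away from zero, a sample-path excursion large enough to make some point there Pareto optimal, so the region cannot be dodged indefinitely. Making the phrase ``bounded away from zero, uniformly in $n$'' precise (so Borel--Cantelli applies), and separately handling the observation noise $\tau^2>0$ (where a single nearby query is not enough and genuine accumulation is needed), are the delicate parts of the argument.
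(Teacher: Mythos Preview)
Your plan is substantially more careful than the paper's own argument, and the two diverge at exactly the point you flag as the crux. The paper's proof is brief: it notes that the maximin rule never repeats a point, then asserts that once infinitely many distinct points sit in the compact $\X$, any candidate $\x$ is ``likely very close'' to some observed $\x_i$, whence $\mbf{k}_n\to\mbf{K}_n^{:,i}$, $\mbf{k}_n^\top\mbf{K}_n^{-1}\to\mbf{e}_i^\top$, and $\sigma_n^2(\x)\to 1-k(\x_i,\x_i)=0$. In other words, the paper treats density of the design as immediate from distinctness plus compactness and never confronts the possibility you isolate in Step~2 --- that the sample-path Pareto sets $X_n^*$, to which all acquisitions are confined, might systematically avoid a region of $\X$; it also works with the noiseless variance formula (no $\tau^2$) and does not argue uniformity in $\x$. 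Your fill-distance decomposition --- Step~1 forcing $g_n\to0$ by a packing/total-boundedness argument, Step~2 showing an unvisited ball of persistently high variance must be hit by $X_n^*$ with probability bounded away from zero, then conditional Borel--Cantelli --- is the right skeleton for a rigorous proof and pinpoints precisely what the paper glosses over. One small sharpening that helps Step~2: for a point in $B'$ to lie in $X_n^*$ it suffices that it maximize a \emph{single} coordinate $Y_j(\cdot,\omega)$ over $\X$ (then nothing can dominate it), which reduces the multiobjective event to a one-GP excursion; combining this with a uniform-in-$n$ bound on $|\mu_{n}|$ of the type underlying Theorem~3.4 is what lets the ``bounded away from zero, uniformly in $n$'' claim go through.
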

\begin{proof}[Proof]
First of all, note that \qpots~chooses new acquisitions according to the maximin distance criterion (Eq. (6)). Therefore, unless the posterior uncertainty is $0$ almost everywhere, the same acquisition is never chosen more than once.

W.l.o.g, let $k(\x, \x) = 1$. Recall that the posterior variance of the GP is given by
\[ \sigma_n^2(\x) = k(\x, \x) - \mbf{k}_n^\top \mbf{K}_n^{-1}\mbf{k}_n.\]
Note that as $n \rightarrow \infty$, then $\mbf{k}_n \rightarrow \mbf{K}^{:,i}_n$, where by $\mbf{K}^{:,i}_n$ we mean the $i$th column of $\mbf{K}_n$, $i \in [n]$. This is because, when the $f$ has been observed at infinitely many points $\{\x_i\}_{i=1}^\infty$ and recall that $X_n$ is the row stack of all of these points, then $\mbf{k}_n = k(\x, X_n)$ represents the covariance between a candidate point $\x$ and all previous observation sites $X_n$. When $n=\infty$, $\x$ is likely very close to one of the points $\x_i \in \{\x_i\}_{i=1}^\infty$ and hence $k(\x, X_n) \rightarrow k(\x_i, X_n),~i\in [n]$. And therefore $\mbf{k}_n^\top \mbf{K}_n^{-1} = \mbf{e}_i$, where $\mbf{e}_i$ is an $n$-vector with $1$ at the $i$th location and $0$ elsewhere. Then
\[ \lim_{n \rightarrow \infty} \sigma_n^2(\x) = 1 - \mbf{K}^{:,i~\top}_n \mbf{e}_i = 1 - k(\x_i, \x_i) = 0.\]
\end{proof}

Using the aforementioned assumptions and \Cref{lm:gpvariance}, we first show that the vector of posterior GPs $\mbf{Y}=[Y_1,\ldots,Y_K]^\top$ converges to the vector of true objectives $\mbf{f}=[f_1,\ldots,f_K]^\top$ with high probability.

\begin{theorem}[{\bf Convergence with high probability}]
Let $\delta \in (0, 1)$ and Assumption \ref{ass:lipschitz} hold with $L^2 = 2 \log \left(\f{2}{\delta}\right)$. Further, let $g$ be some affine function that operates on $f$. Let $\tilde{\X}$ denote a discretization of $\X$ with $|\tilde{\X}| < \infty$, where we evaluate the oracle $f$. Let $\epsilon \geq 0$, and $a_n = \sqrt{2\log(b_n |\tilde{\X}|)/\delta}$, where $\sum_{n=1}^\infty 1/b_n = 1$. Then the vector of posterior GP objectives converges to $\mbf{f}$ with high probability; that is, the following holds:

\[ \mP \left[\forall \x \in \X, ~ \lim_{n \rightarrow \infty}~ \|\mbf{f}(\x) - \mbf{Y}^{n-1}(\x)\|_\infty \leq \epsilon \right] > 1 - \delta. \]
\end{theorem}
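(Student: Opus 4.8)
The plan is to prove the estimate one coordinate at a time and then union bound over $i\in[K]$. Fix $i$, abbreviate $f\equiv f_i$, $\mu_{n-1}\equiv\mu^i_{n-1}$, $\sigma_{n-1}\equiv\sigma^i_{n-1}$, and (following the informal statement) read $\mbf{Y}^{n-1}_i$ as the posterior mean $\mu_{n-1}$. Since $f$ is a draw from the GP prior, the posterior after $n-1$ observations is $f(\x)\mid\D_{n-1}\sim\mcl{N}(\mu_{n-1}(\x),\sigma^2_{n-1}(\x))$ for each fixed $\x$, so the Gaussian tail bound gives $\mP[\,|f(\x)-\mu_{n-1}(\x)|>a_n\sigma_{n-1}(\x)\mid\D_{n-1}\,]\le e^{-a_n^2/2}$ for every $\x\in\tilde{\X}$ (if instead $\mbf{Y}^{n-1}_i$ is a Thompson sample $Y_i(\x,\omega)$ the variance merely becomes $2\sigma^2_{n-1}(\x)$, which is harmless).

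First I would union bound over the finite grid $\tilde{\X}$: $\mP[\exists\,\x\in\tilde{\X}:\,|f(\x)-\mu_{n-1}(\x)|>a_n\sigma_{n-1}(\x)]\le|\tilde{\X}|\,e^{-a_n^2/2}=\delta/b_n$, using $a_n=\sqrt{2\log(b_n|\tilde{\X}|/\delta)}$ (so that $a_n^2/2=\log(b_n|\tilde{\X}|/\delta)$). Then I would union bound over $n\in\N$ and invoke $\sum_{n\ge1}1/b_n=1$ to conclude that the event $\mcl{E}:=\{\,|f(\x)-\mu_{n-1}(\x)|\le a_n\sigma_{n-1}(\x)\text{ for all }n\ge1\text{ and all }\x\in\tilde{\X}\,\}$ holds with probability at least $1-\delta$. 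Covering all $K$ objectives simultaneously just replaces $|\tilde{\X}|$ by $K|\tilde{\X}|$ (equivalently rescales $\delta$), which changes only constants.

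Next I would pass from the grid to all of $\x\in\X$ by the standard Lipschitz/discretization device of \cite{srinivas2009gaussian}: take a sequence of grids whose mesh shrinks with $n$; for $\x\in\X$ choose a nearest grid point $\tx$ and split $|f(\x)-\mu_{n-1}(\x)|\le|f(\x)-f(\tx)|+|f(\tx)-\mu_{n-1}(\tx)|+|\mu_{n-1}(\tx)-\mu_{n-1}(\x)|$. The first term is at most $L\,\|\x-\tx\|$ on an event of probability at least $1-\delta/2$ by \Cref{ass:lipschitz} with $L^2=2\log(2/\delta)$ (after a union bound over grid points, absorbed into constants); the middle term is controlled on $\mcl{E}$; and the last term is $O(\|\x-\tx\|)$ since the kernel smoothness underlying \Cref{ass:lipschitz} makes $\mu_{n-1}$ Lipschitz with a data-independent constant. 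Refining the mesh with $n$ drives both discretization terms to $0$.

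Finally, on the intersection of these two high-probability events I would let $n\to\infty$. \Cref{lm:gpvariance} gives $\sup_{\x\in\X}\sigma_{n-1}(\x)\to0$, while for the admissible choices of $b_n$ (any normalized, polynomially growing sequence, e.g.\ $b_n=\tfrac{\pi^2}{6}n^2$) one has $a_n=O(\sqrt{\log n})$, so $a_n\,\sup_{\x\in\X}\sigma_{n-1}(\x)\to0$; together with the vanishing discretization terms this yields $\lim_{n\to\infty}\sup_{\x\in\X}|f(\x)-\mu_{n-1}(\x)|=0\le\epsilon$, and taking the maximum over $i\in[K]$ upgrades this to the $\|\cdot\|_\infty$ statement on the same event. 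The step I expect to be the main obstacle is precisely this last one: making $a_n\sigma_{n-1}(\x)\to0$ rigorous needs a \emph{quantitative} rate for the posterior variance --- \Cref{lm:gpvariance} only states that it vanishes --- so one must either supplement it with a maximum-information-gain bound of the form $\sigma^2_{n-1}(\x)=O(\mathrm{polylog}(n)/n)$ (which requires slightly more than \Cref{ass:lipschitz}) or argue directly that the \qpots~maximin rule of \eqref{eqn:maximin} fills $\X$ fast enough that $\sigma_{n-1}(\x)^{-1}$ outgrows $a_n$; the simultaneous control of the two discretization terms while the grid is refined is the secondary technical point.
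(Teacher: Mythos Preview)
Your plan is essentially the paper's: Gaussian tail bound on $|f(\x)-\mu_{n-1}(\x)|$ at grid points, union bound over $\tilde{\X}$ and over $n$ via $\sum_n 1/b_n=1$, extend to $\X$ using the Lipschitz assumption, then invoke \Cref{lm:gpvariance} to kill the $a_n\sigma_{n-1}$ term. Two points where the paper differs slightly from you are worth noting.

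First, the paper does \emph{not} refine the grid with $n$: it keeps $\tilde{\X}$ fixed, uses only the two-term split $|f(\x)-\mu_{n-1}(\tilde{\x})|\le |f(\x)-f(\tilde{\x})|+|f(\tilde{\x})-\mu_{n-1}(\tilde{\x})|$, and simply \emph{defines} $\epsilon:=L\|\x-\tilde{\x}\|$ as the residual discretization error. So the ``$\epsilon\ge 0$'' in the statement is not a free parameter but is determined by the mesh of $\tilde{\X}$; your mesh-refinement scheme would actually give the sharper conclusion $\epsilon=0$ but is not what the paper does. Second, the paper reads $\mbf{Y}^{n-1}$ as the Thompson sample and passes from $\mu_{n-1}$ to $Y^{n-1}$ at the end via the reparametrization identity $Y^{n-1}=\mu_{n-1}+\sigma_{n-1}z$ together with $\sigma_{n-1}\to 0$; your parenthetical remark about doubling the variance would handle this equally well.

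Finally, the obstacle you flag --- that \Cref{lm:gpvariance} gives no rate, so $a_n\sigma_{n-1}(\x)\to 0$ is not immediate because $a_n\to\infty$ --- is real, and the paper's proof simply asserts the limit without addressing it. So your identification of this as the main gap is accurate; the paper does not resolve it either.
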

\begin{proof}[Proof of Theorem 3.4]
 We first present a simple identity that bounds the probability that a Gaussian random variable is greater than some value. Given $r \sim \mcl{N}(0, 1)$ and $c > 0$, consider the following:
\begin{equation}
    \begin{split}
        \mP(r > c) =& \int_c^\infty \f{1}{\sqrt{2\pi}} e^{-r^2 / 2} dr \\
        = & e^{-c^2 /2}  \int_c^\infty \f{1}{\sqrt{2\pi}} e^{-(r - c)^2 / 2} e^{-c(r-c)} dr\\
        \leq & e^{-c^2 / 2} \int_c^\infty \f{1}{\sqrt{2\pi}} e^{-(r - c)^2 / 2} dr \leq \f{1}{2}e^{-c^2 / 2},
    \end{split}
\end{equation}
where the second line is due to the fact that $0 \leq e^{-c(r-c)} \leq 1$ for $r \geq c$ and $c>0$ and the last line from the fact that the integrand is the density of $\mcl{N}(c, 1)$. Using this result and the prior assumption that the unknown function $f$ has a GP prior, we state, with $c = a_n$ and a fixed $\x_i \in \tilde{X}$,
\begin{equation}
    \begin{split}
    \mP \left[ |f(\x_i) - \mu_{n-1}(\x_i)| > a_n \sigma_{n-1}(\x_{i}) \right] \leq & \f{1}{2} e^{-a_n^2 / 2} = \f{\delta}{2 b_n |\tilde{\mathcal{X}}|}. 
    \end{split}
\end{equation}
Applying the union bound $\forall i$, we get
\begin{equation}
    \begin{split}
    \mP \bigcup_{i=1}^{|\tilde{\mathcal{X}}|} &|f(\x_i) -\mu_{n-1}(\x_i)| \\
        &> a_n \sigma_{n-1}(\x_{i}) \\
        &\leq \sum_{i=1}^{|\tilde{\mathcal{X}}|}  \mP \left[ |f(\x_i) - \mu_{n-1}(\x_i)| > a_n \sigma_{n-1}(\x_{i}) \right]\\
        &\leq \sum_{i=1}^{|\tilde{\mathcal{X}}|} \f{\delta}{2 b_n |\tilde{\mathcal{X}}|} = \f{\delta}{2 b_n}.
    \end{split}
\end{equation}
Now we apply the union bound $\forall n$:
\begin{equation}
    \begin{split}
        & \mP \left[\forall \tilde{\mathcal{X}} \in \tilde{\mathcal{X}},~ \bigcup_{n=1}^{\infty} |f(\tilde{\mathcal{X}}) - \mu_{n-1}(\tilde{\mathcal{X}})| > a_n \sigma_{n-1}(\tilde{\mathcal{X}}) \right] \\
        &\leq \sum_{n=1}^{\infty} \f{\delta}{2 b_n} = \f{\delta}{2}.    \\  
    \end{split}
    \label{e:bound_for_discreteX}
\end{equation}
We would like to extend the proof to a general $\x \in \X$. For some $\x \in \X$, let $\tilde{\mathcal{X}}$ denote a point $\in \tilde{\mathcal{X}}$ that is nearest (in Euclidean distance) to $\x$. Then, per Assumption~3.2 and by setting $L^2 = 2 \log \left(\f{2}{\delta}\right)$, the following statement holds:
\begin{equation}
    \mP \left[ |f(\x) - f(\tilde{\x})| > \sqrt{2 \log \left(\f{2}{\delta}\right)} \|\x - \tilde{\x}\| \right] \leq \f{\delta}{2}.
    \label{e:lipshictz_bound}
\end{equation}
Combining \eqref{e:bound_for_discreteX} and \eqref{e:lipshictz_bound}, the following statement holds:
\begin{equation}
    \begin{split}
        & \mP \left[ \forall \x \in \X, \forall n,~ |f(\x) - \mu_{n-1}(\tilde{\x})| \leq \right. \\
        & \left. a_n \sigma_{n-1}(\x) + \sqrt{2 \log \left(\f{2}{\delta}\right)} \|\x - \tilde{\x}\| \right] > 1 - \delta.
    \end{split}
\end{equation}
From Lemma 3.2, the $\lim_{n \rightarrow \infty} \sup_{\x \in \X} \sigma_{n-1}(\x) = 0$, and thus
\[ \mP \left[\forall \x \in \X, \lim_{n\rightarrow \infty}|f(\x) - \mu_{n-1}(\tilde{\x})| \leq \epsilon \right] > 1 - \delta, \]
where $\epsilon = \sqrt{2 \log \left(\f{2}{\delta}\right)} \|\x - \tilde{\x}\|$. Circling back to the reparametrization trick, the above statement can be restated as
\[
\begin{split}
\mP \left[\forall \x \in \X, \lim_{n\rightarrow \infty}|f(\x) - Y^{n-1}(\tilde{\x}) + \sigma_{n-1}(\x)z| \leq \epsilon \right]&\\ > 1 - \delta \\
\mP \left[\forall \x \in \X, \lim_{n\rightarrow \infty}|f(\x) - Y^{n-1}(\tilde{\x})| \leq \epsilon \right]& \\
> 1 - \delta,
\end{split}
\]
where $Y^{n-1}$ is the posterior GP after the $(n-1)$th round, $z$ is the standard normal random variable, and the second line follows from Lemma 3.3.
By extension, the above statement holds for all objectives $f_i,~\forall i \in [K]$. Therefore, the above statement can be generalized as 
\[ \mP \left[\forall \x \in \X, \lim_{n\rightarrow \infty}\|\mbf{f}(\x) - \mbf{Y}^{n-1}(\tilde{\x})\|_\infty \leq \epsilon \right] > 1 - \delta. \]
\end{proof}

Due to \Cref{thm:ass_consistency}, the Pareto frontier computed with the GP posteriors converges to true Pareto frontier with high probability. Note that \Cref{subfig:assymp} confirms this empirically.

\section{Exploration-exploitation tradeoff}
In \Cref{fig:bc_demo}, we demonstrate the exploration-exploitation tradeoff in \qpots. The black crosses represent the true Pareto frontier/set and the blue square represents the \qpots~acquisition. Notice that the acquisition is far away from the brown training points (top middle), demonstrating exploration, and closer to the training points (top left and right), demonstrating exploitation. The right plots (top and bottom) demonstrate the \qpots~is asymptotically consistent by converging to the true solution.
\begin{figure*}[ht!]
    \centering
    \begin{subfigure}{0.33\textwidth}
        \includegraphics[width=1\linewidth]{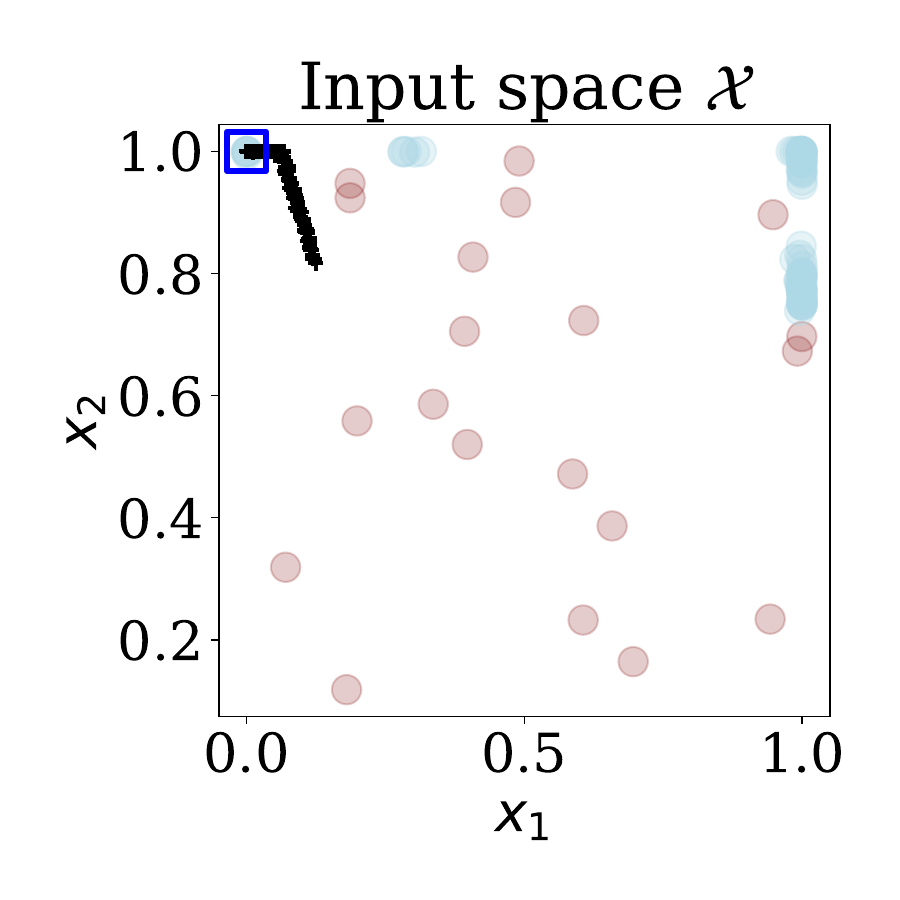}
        \caption{$n=20$}
    \end{subfigure}%
    \begin{subfigure}{0.33\textwidth}
        \includegraphics[width=1\linewidth]{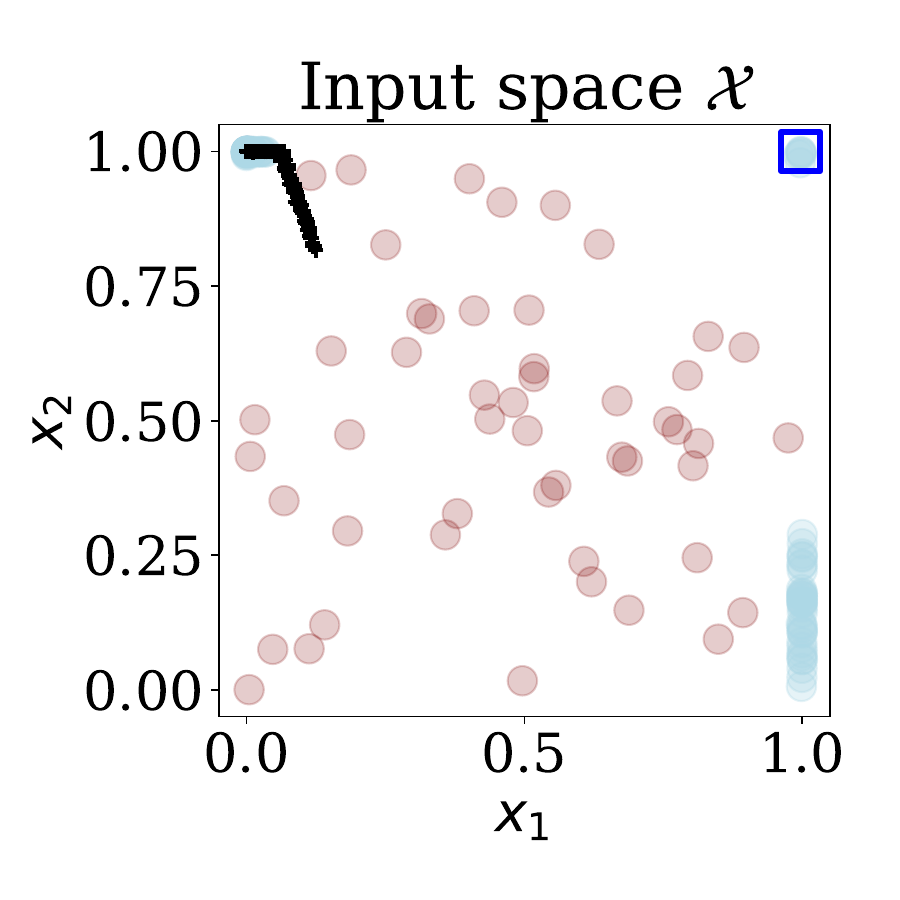}
        \caption{$n=50$}
    \end{subfigure}%
    \begin{subfigure}{0.33\textwidth}
        \includegraphics[width=1\linewidth]{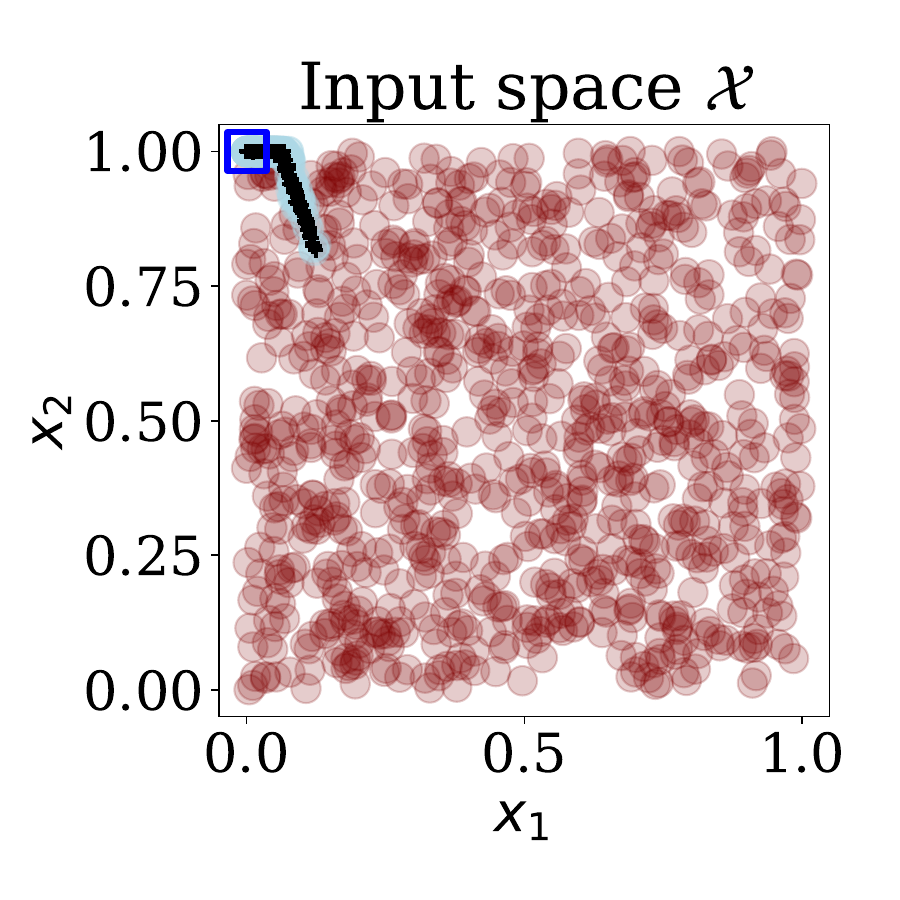}
        \caption{$n=900$}
    \end{subfigure}\\
    \begin{subfigure}{0.33\textwidth}
        \includegraphics[width=1\linewidth]{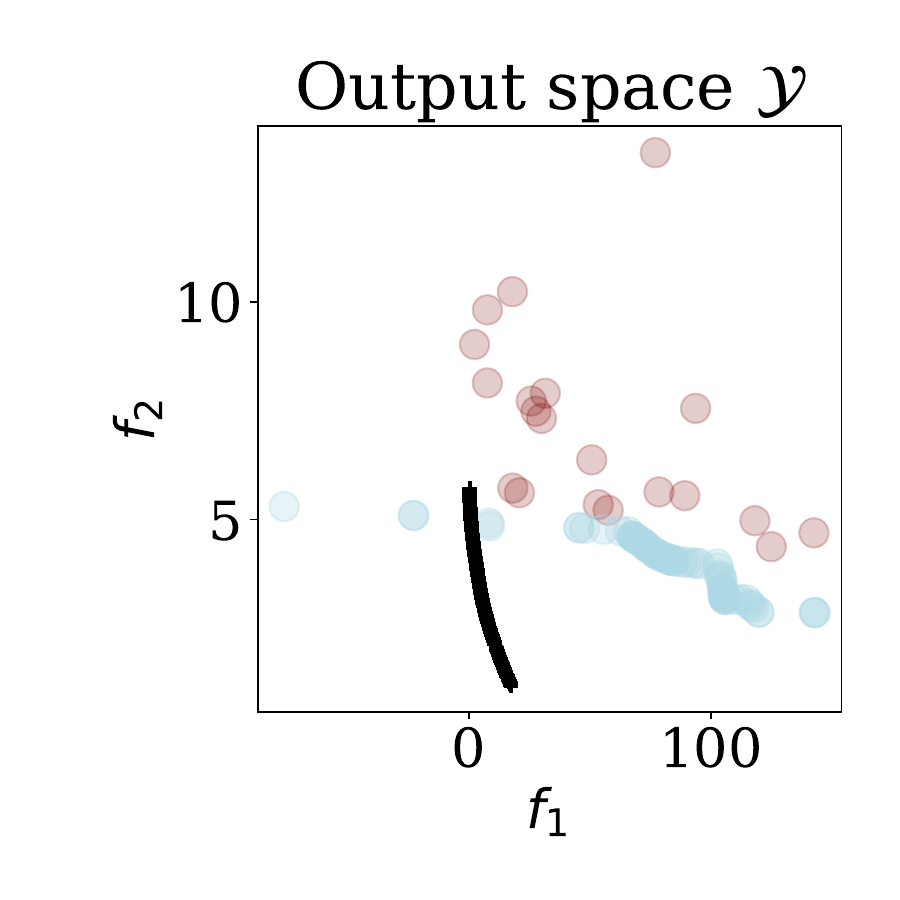}
        \caption{$n=20$}
    \end{subfigure}%
    \begin{subfigure}{0.33\textwidth}
        \includegraphics[width=1\linewidth]{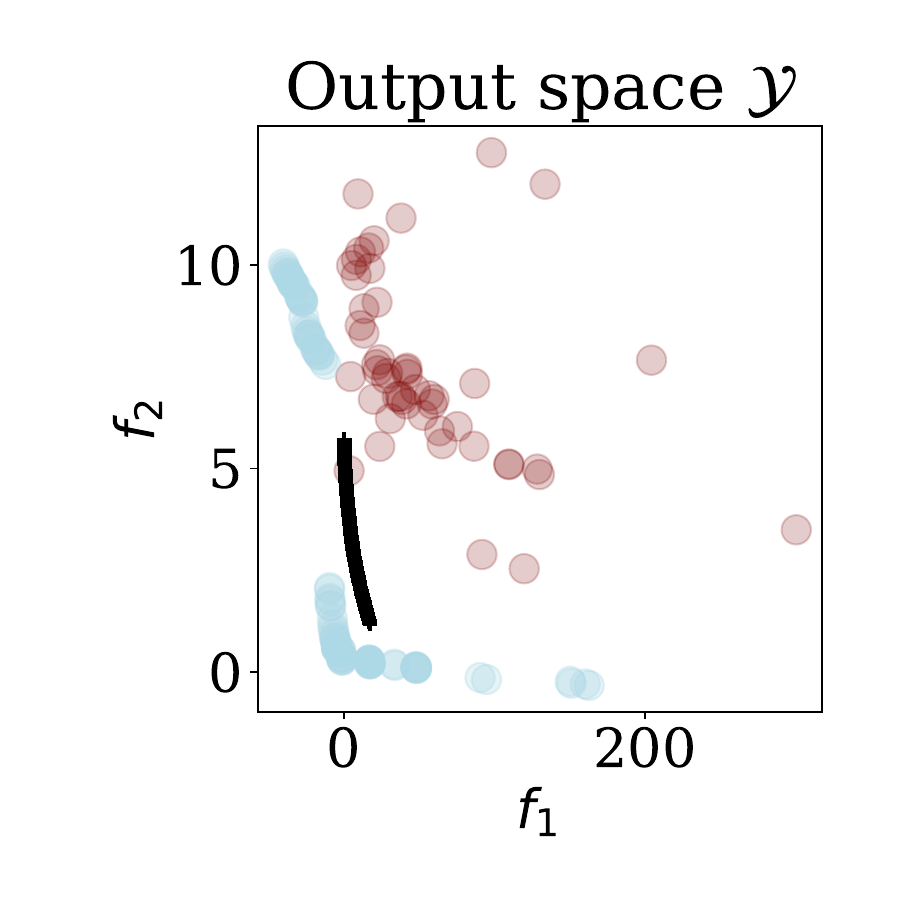}
        \caption{$n=50$}
    \end{subfigure}%
    \begin{subfigure}{0.33\textwidth}
        \includegraphics[width=1\linewidth]{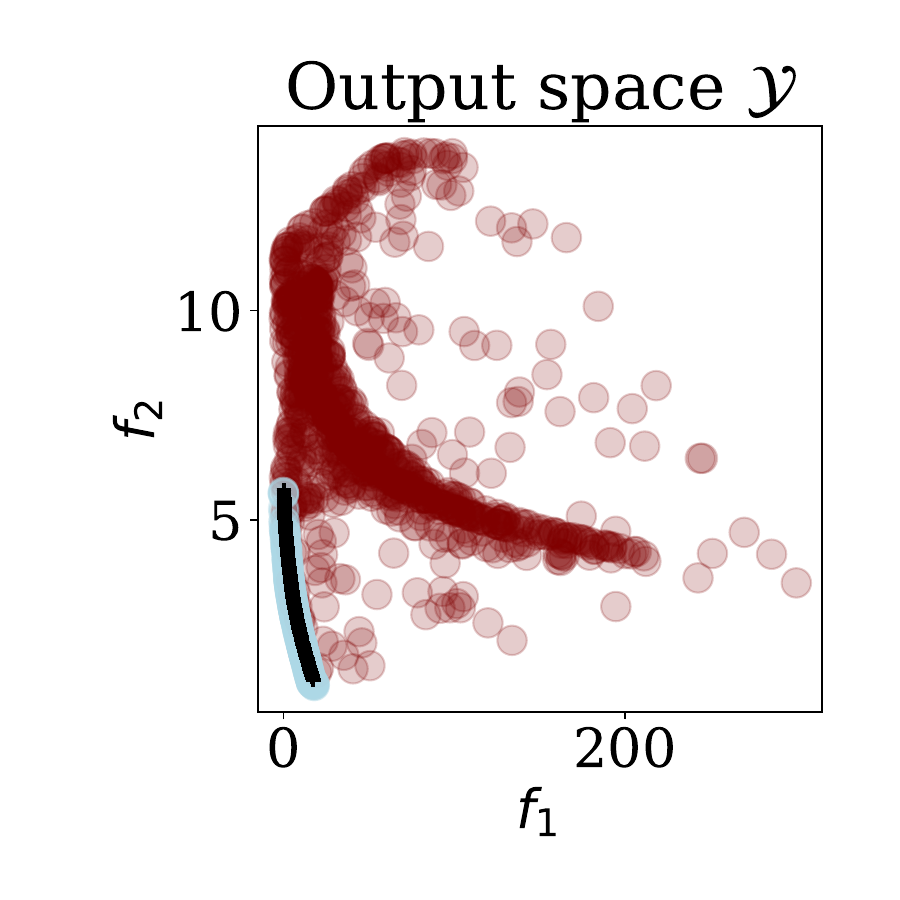}
        \caption{$n=900$}
        \label{subfig:assymp}
    \end{subfigure}\\ 
    % \begin{subfigure}{1\textwidth}
    % \centering
    %     \includegraphics[width=.9\linewidth]{}
    %     \caption{With constraints}
    % \end{subfigure}\\
    \caption{Demonstration on the Branin-Currin ($d=2, K=2$) test function. Maroon: training points, blue circles: GP posterior samples, blue square: \qpots~acquisition, black: true Pareto frontier/set. The input space plots show how \qpots~explores and exploits the space (based on distance to maroon). The output space plots show how \qpots~asymptotically converges to the solution.}
    \label{fig:bc_demo}
\end{figure*}

\section{Additional experiments}
\begin{figure*}[htb!]
    \centering
    \begin{subfigure}{0.5\textwidth}
        \includegraphics[width=1\linewidth]{newest_figs/osy.pdf}
        \caption{OSY}
    \end{subfigure}%
    \begin{subfigure}{0.5\textwidth}
        \includegraphics[width=1\linewidth]{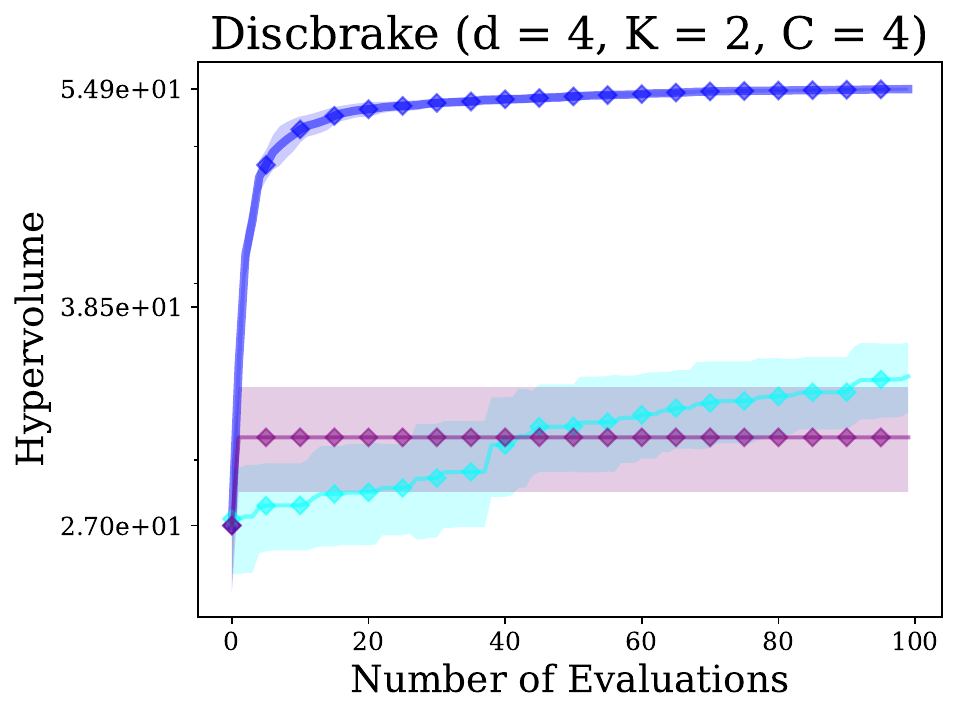}
        \caption{Discbrake}
    \end{subfigure}\\
    \begin{subfigure}{1\textwidth}
        \includegraphics[width=1\linewidth]{newest_figs/legend_seq.pdf}
    \end{subfigure}
    \begin{subfigure}{0.5\textwidth}
        \includegraphics[width=1\linewidth]{newest_figs/osy_batch.pdf}
        \caption{OSY}
    \end{subfigure}%
    \begin{subfigure}{0.5\textwidth}
        \includegraphics[width=1\linewidth]{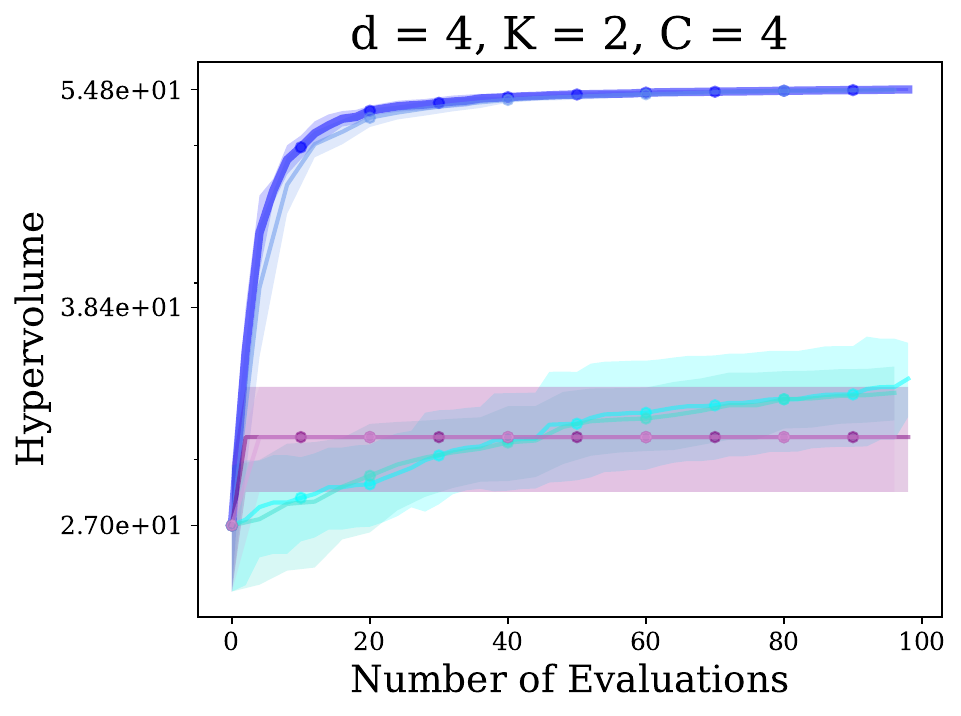}
        \caption{Discbrake}
    \end{subfigure}\\
        \begin{subfigure}{1\textwidth}
        \includegraphics[width=1\linewidth]{newest_figs/legend_batch.pdf}
    \end{subfigure}
    \caption{Additional {\bf constrained real-world experiments}; top row: sequential, bottom row: batch acquisitions. Note that entropy based acquisition functions (PESMO, MESMO, JESMO) and TSEMO do not support constraints. We were not able to get any results with $qNEHVI$ due to memory overflow issues. Therefore, we have compared against qNParEGO and Sobol. Note that we ran qPOTS-Nystrom-Pareto only for $d\geq 10$ cases as it is not necessary for low dimensional cases.}
    \label{fig:cons}
\end{figure*}

We show the performance on two constrained problems: the OSY problem~\cite{osyczka1995new} with $2$ objectives and $6$ constraints, and the discbrake~\cite{tanabe2020easy} problem with $2$ objectives and $4$ constraints. Note that, TSEMO as well as the entropy based acquisition functions don't support constraints. We had trouble running constrained problems with qNEHVI---the code took extraordinarily long time to execute and we had memory overflow issues. We perceive this as a computational bottleneck in qNEHVI. On the other hand, \qpots~requires no special modifications for handling constraints and the computational overhead is only due to fitting additional GPs for the constraints. We consider this one of the biggest strengths of \qpots.

We also show additional synthetic experiments in \Cref{fig:synthetic_batch} which includes high-dimensional experiments up to $d=20$. Again, \qpots~is either the best or amongst the best in all cases.

\begin{figure*}[htb!]
    \centering
    \begin{subfigure}{0.33\textwidth}
        \includegraphics[width=1\linewidth]{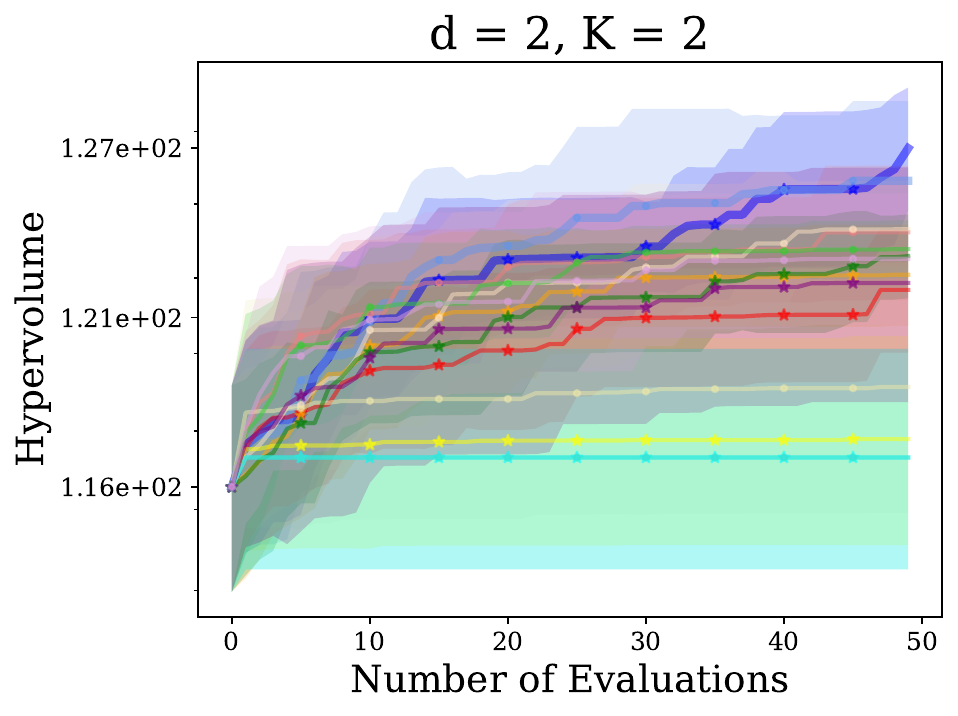}
        \caption{ZDT3}
    \end{subfigure}%
    \begin{subfigure}{0.33\textwidth}
        \includegraphics[width=1\linewidth]{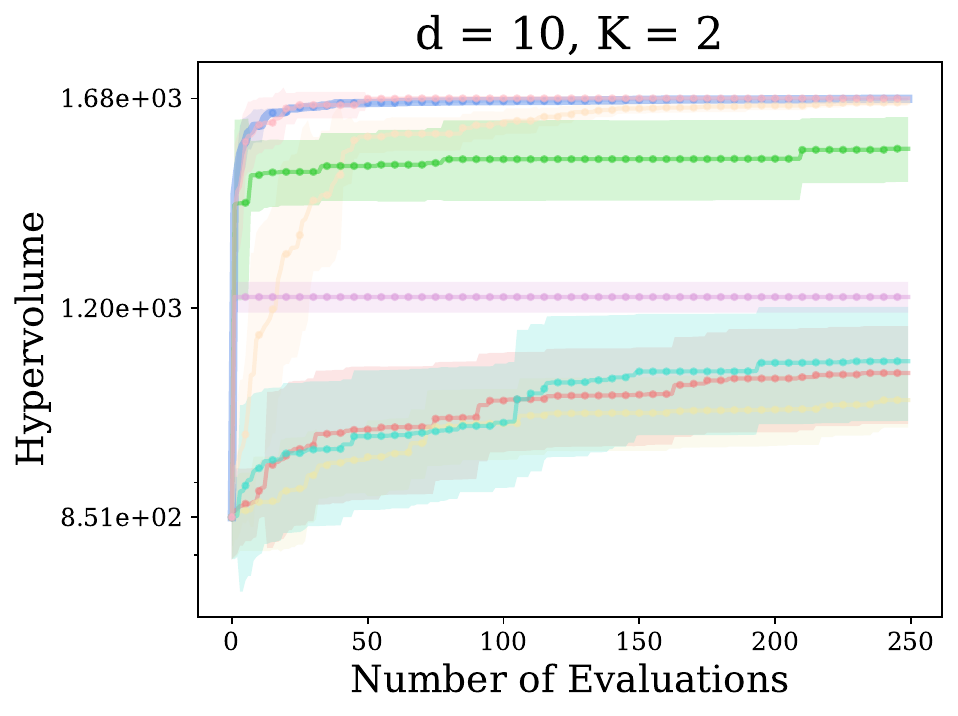}
        \caption{DH3($10$d)}
    \end{subfigure}%
    \begin{subfigure}{0.33\textwidth}
        \includegraphics[width=1\linewidth]{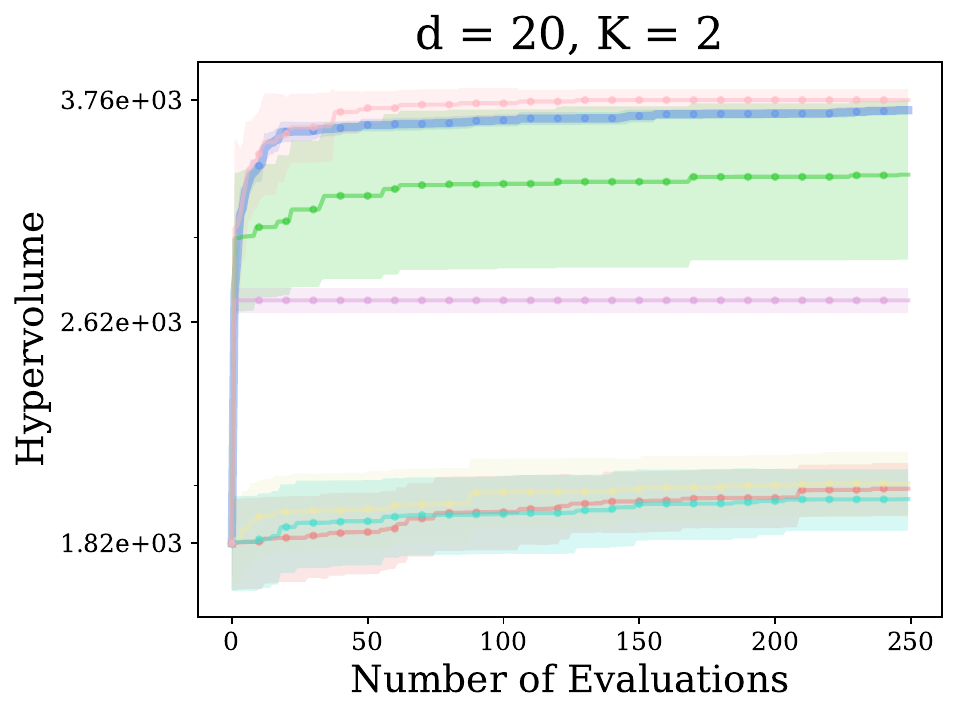}
        \caption{DH3($20$d)}
    \end{subfigure}\\
    \begin{subfigure}{1\textwidth}
        \includegraphics[width=1\linewidth]{newest_figs/legend_batch.pdf}
    \end{subfigure}\\
    \caption{Additional {\bf synthetic batch experiments}. \qpots~is either the best or amongst the best.}
    \label{fig:synthetic_batch}
\end{figure*}

\begin{figure*}[htb!]
    \centering
    \begin{subfigure}{0.33\textwidth}
        \includegraphics[width=1\linewidth]{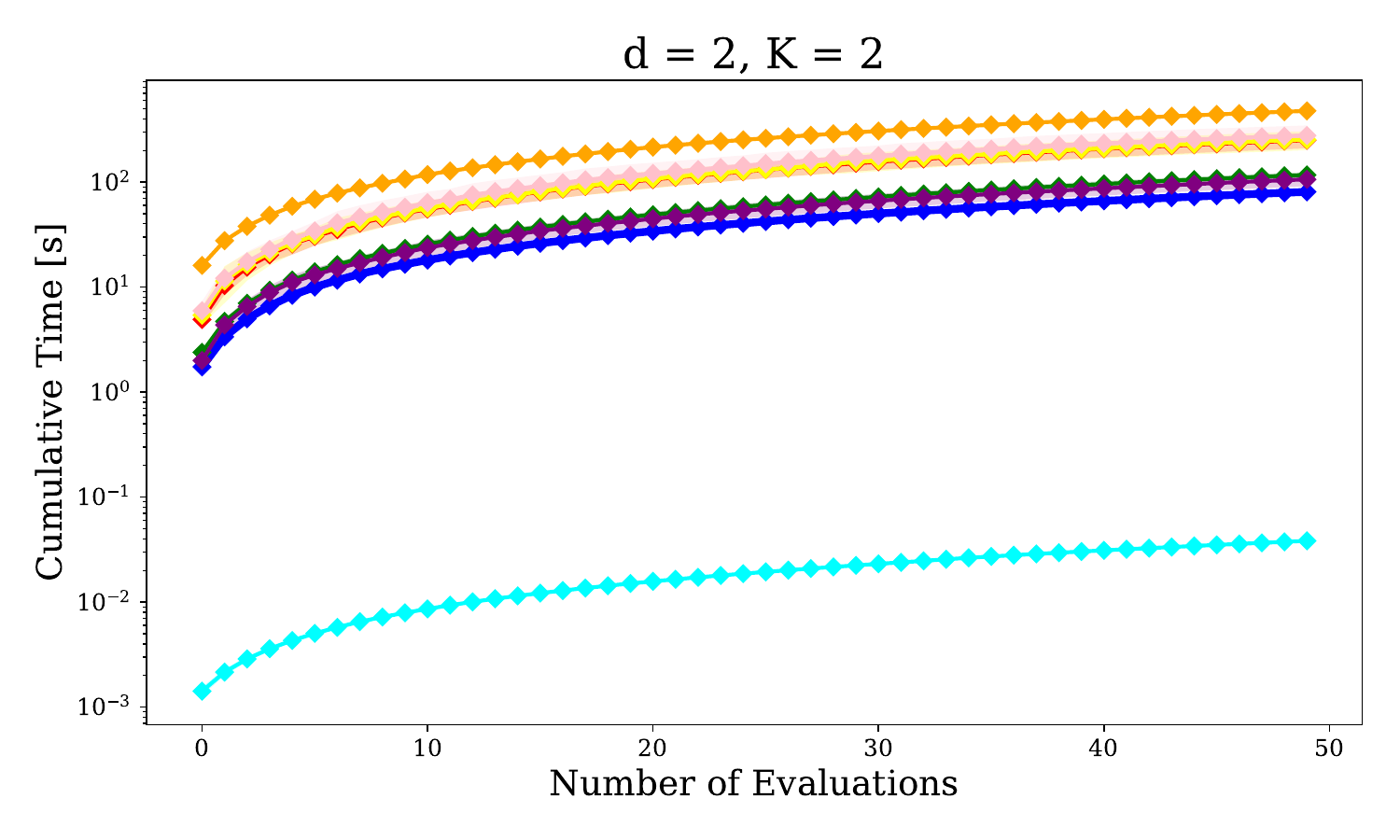}
        \caption{Branin-Currin}
    \end{subfigure}%
    \begin{subfigure}{0.33\textwidth}
        \includegraphics[width=1\linewidth]{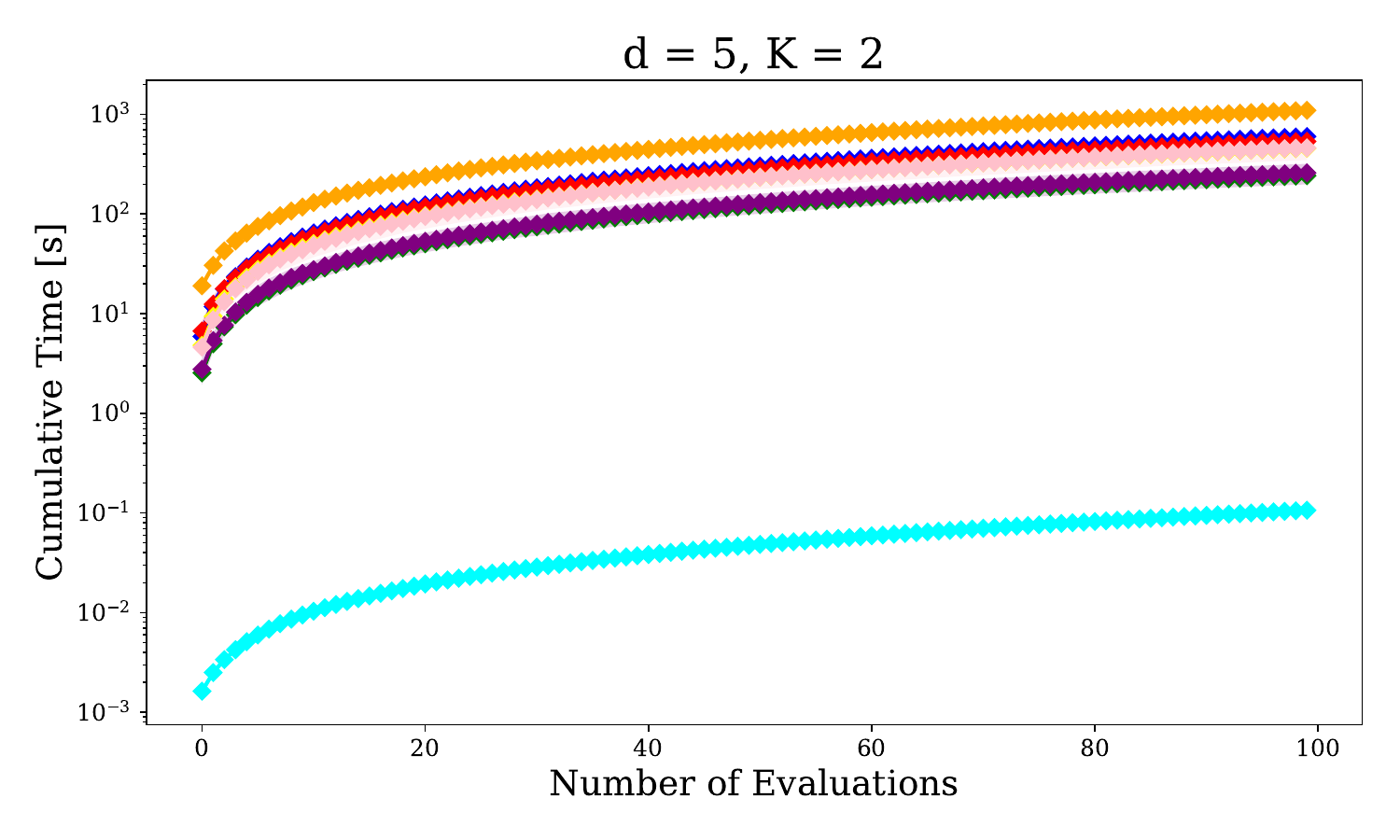}
        \caption{DTLZ7}
    \end{subfigure}%
    \begin{subfigure}{0.33\textwidth}
        \includegraphics[width=1\linewidth]{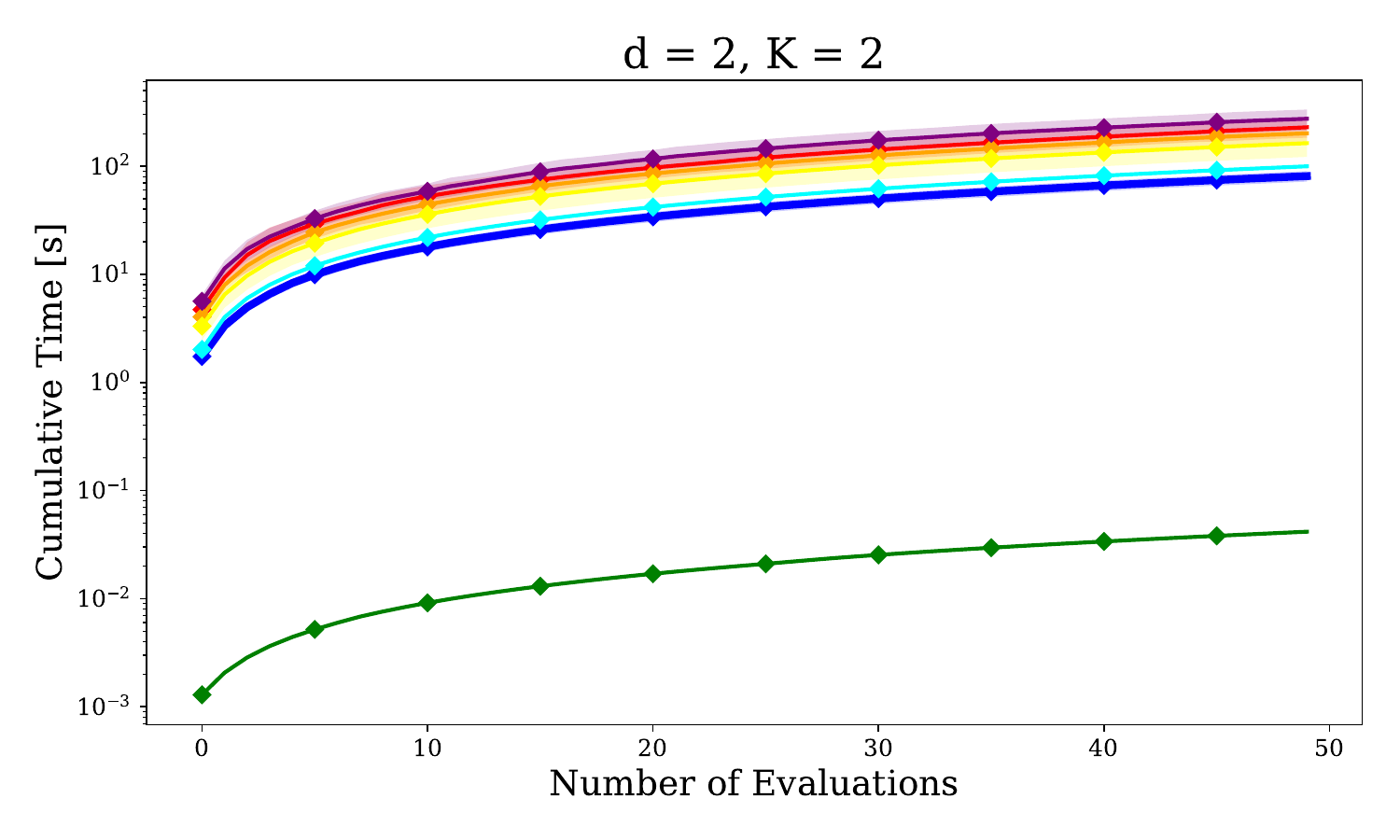}
        \caption{ZDT3}
    \end{subfigure}\\
    \begin{subfigure}{0.33\textwidth}
        \includegraphics[width=1\linewidth]{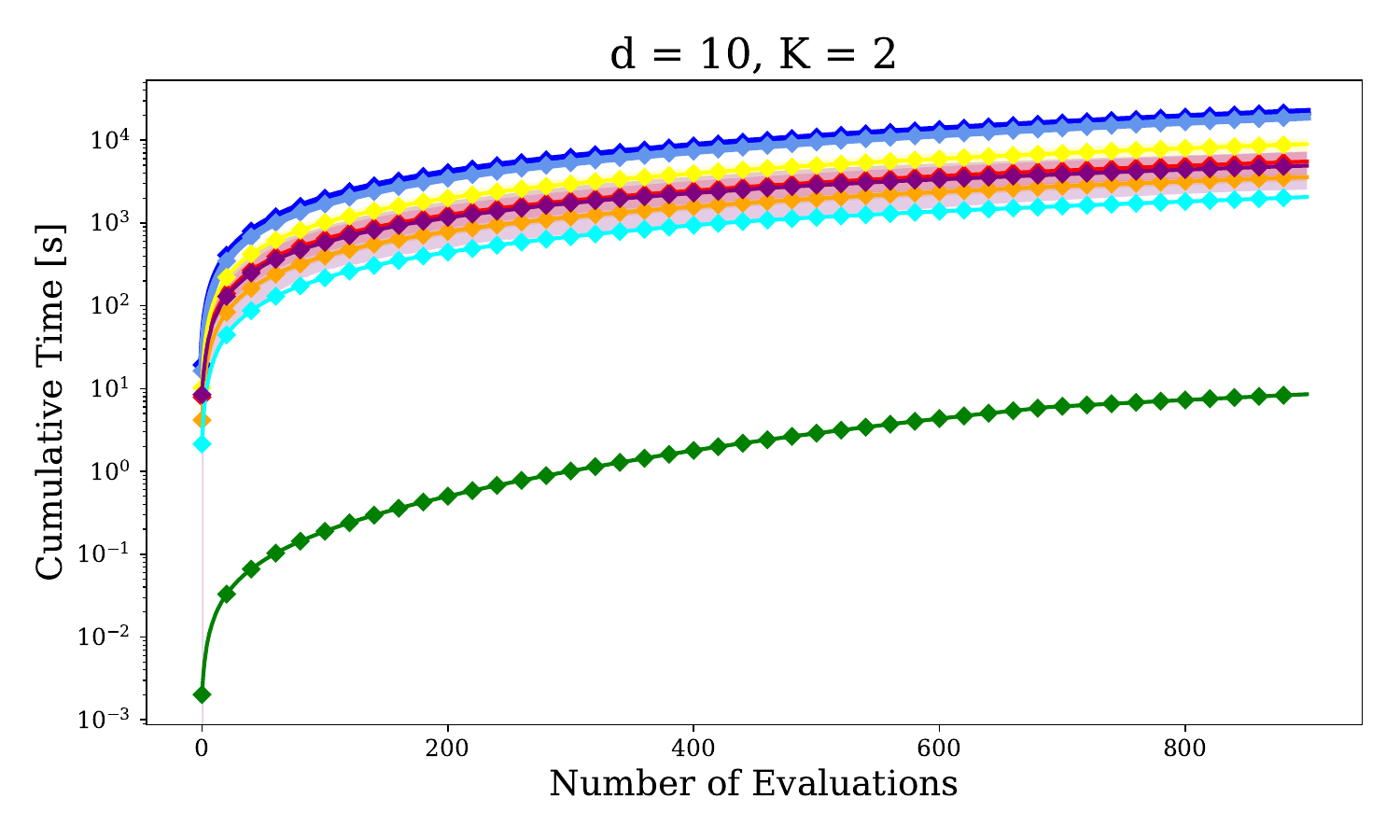}
        \caption{ZDT3}
    \end{subfigure}%
    \begin{subfigure}{0.33\textwidth}
        \includegraphics[width=1\linewidth]{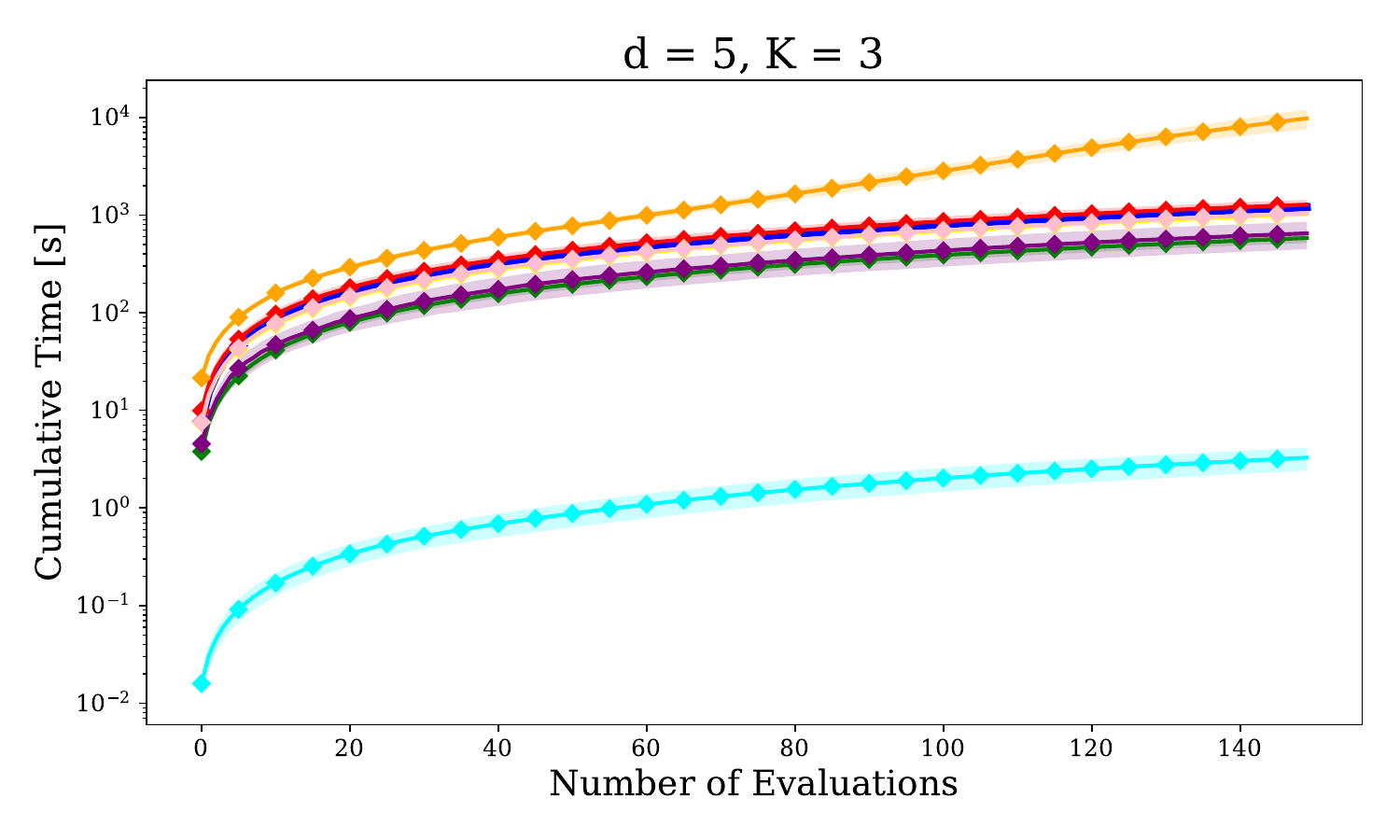}
        \caption{Vehicle design}
    \end{subfigure}%
    \begin{subfigure}{0.33\textwidth}
        \includegraphics[width=1\linewidth]{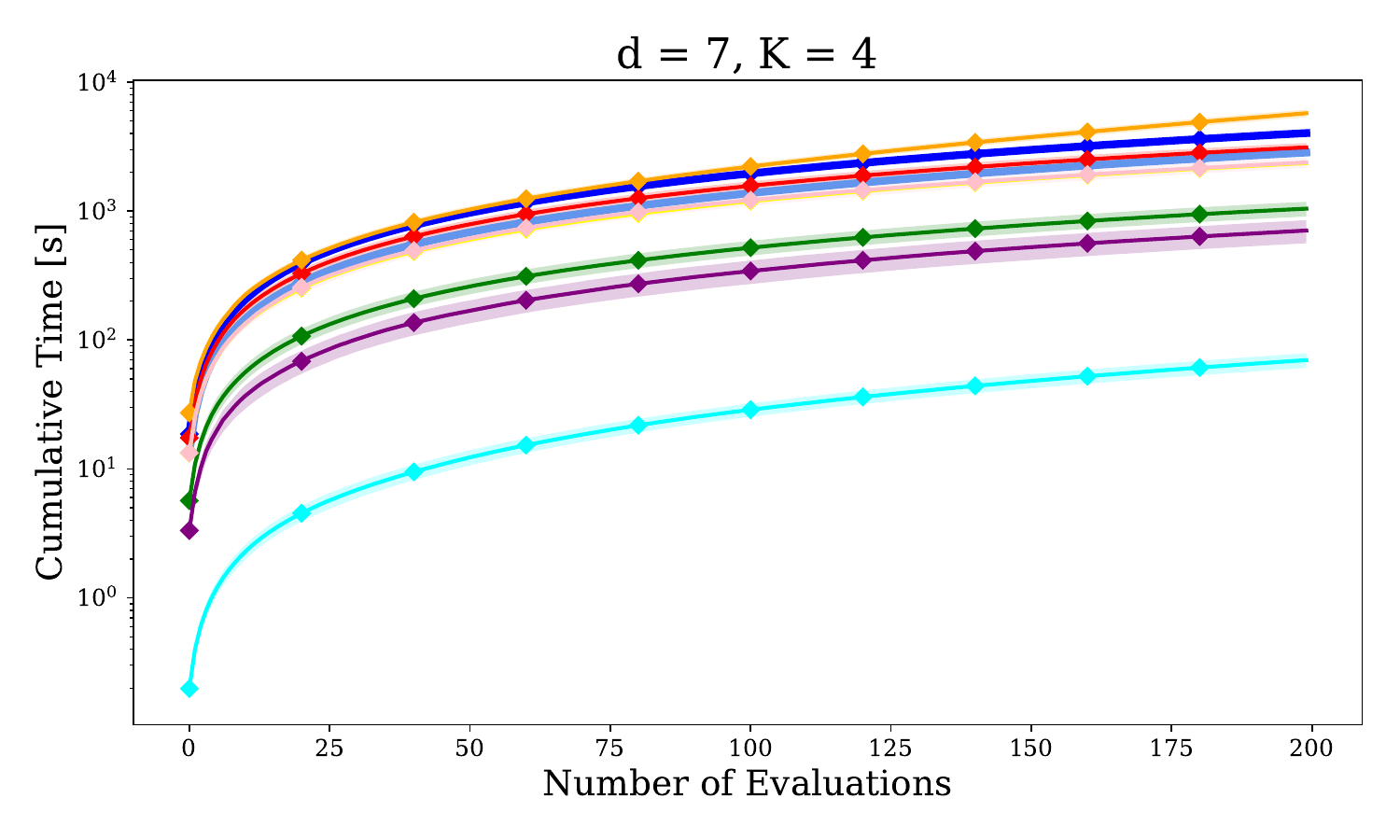}
        \caption{Car side-impact}
    \end{subfigure}\\     
    \begin{subfigure}{0.33\textwidth}
        \includegraphics[width=1\linewidth]{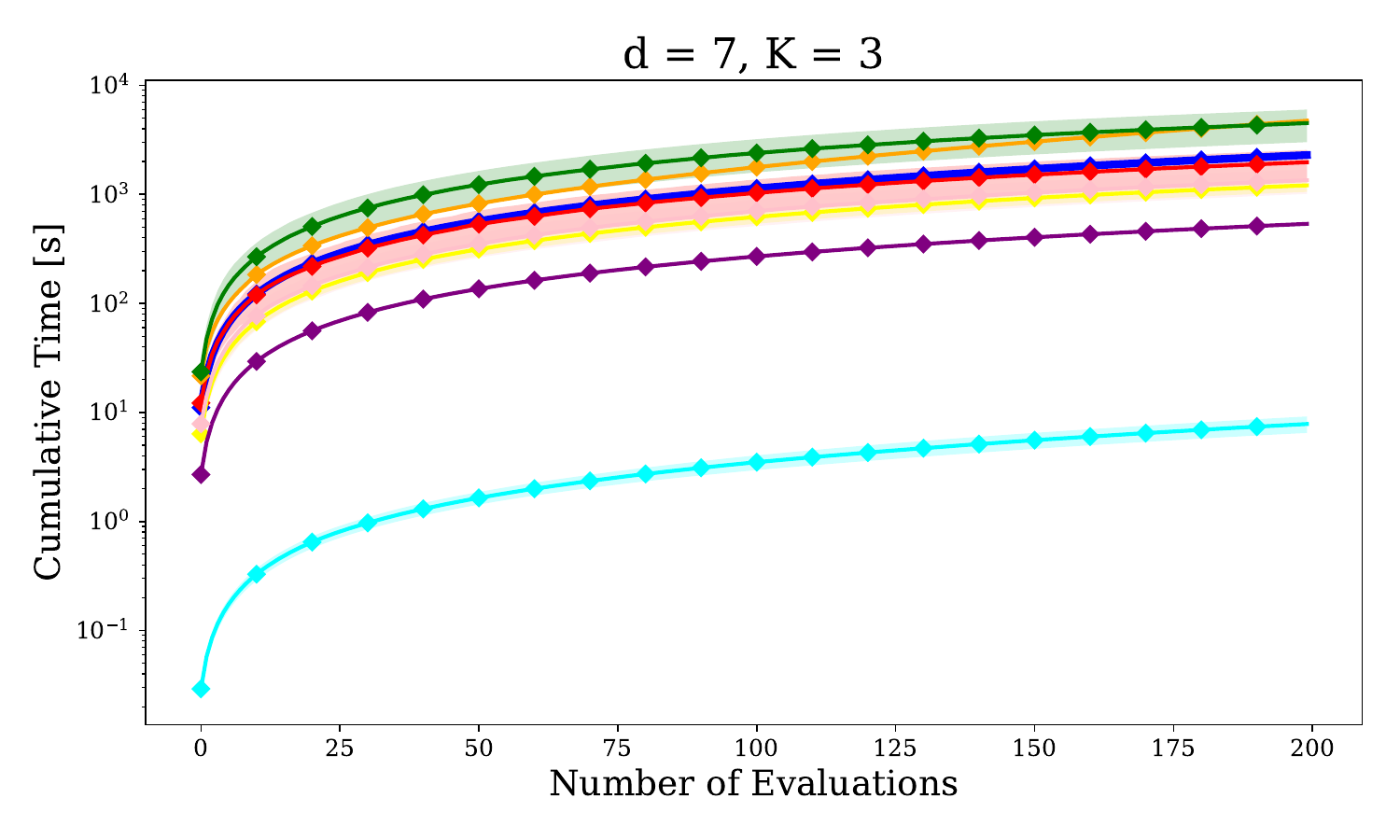}
        \caption{Penicillin production}
    \end{subfigure}%
        \begin{subfigure}{0.33\textwidth}
        \includegraphics[width=1\linewidth]{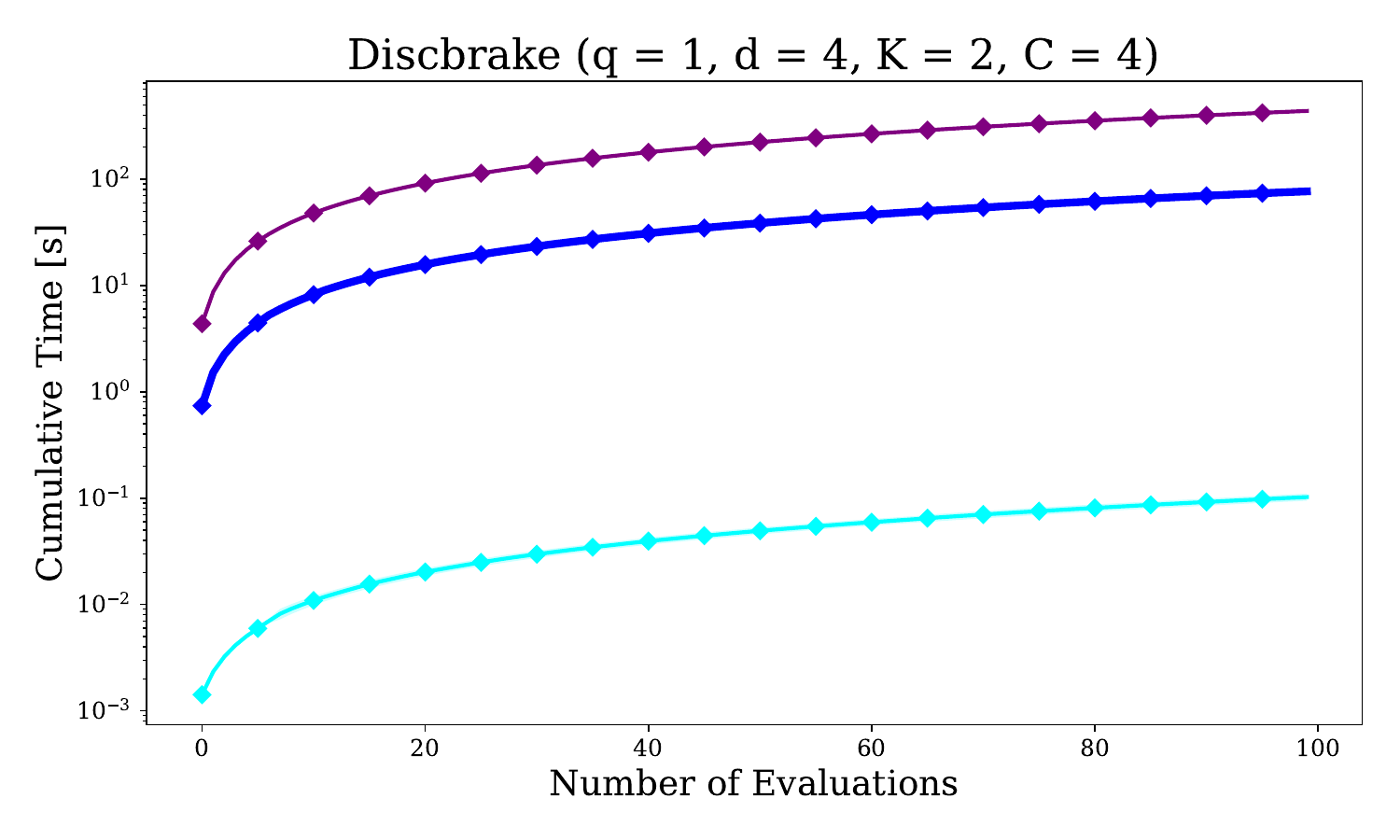}
        \caption{Discbrake}
    \end{subfigure}%
    \begin{subfigure}{0.33\textwidth}
        \includegraphics[width=1\linewidth]{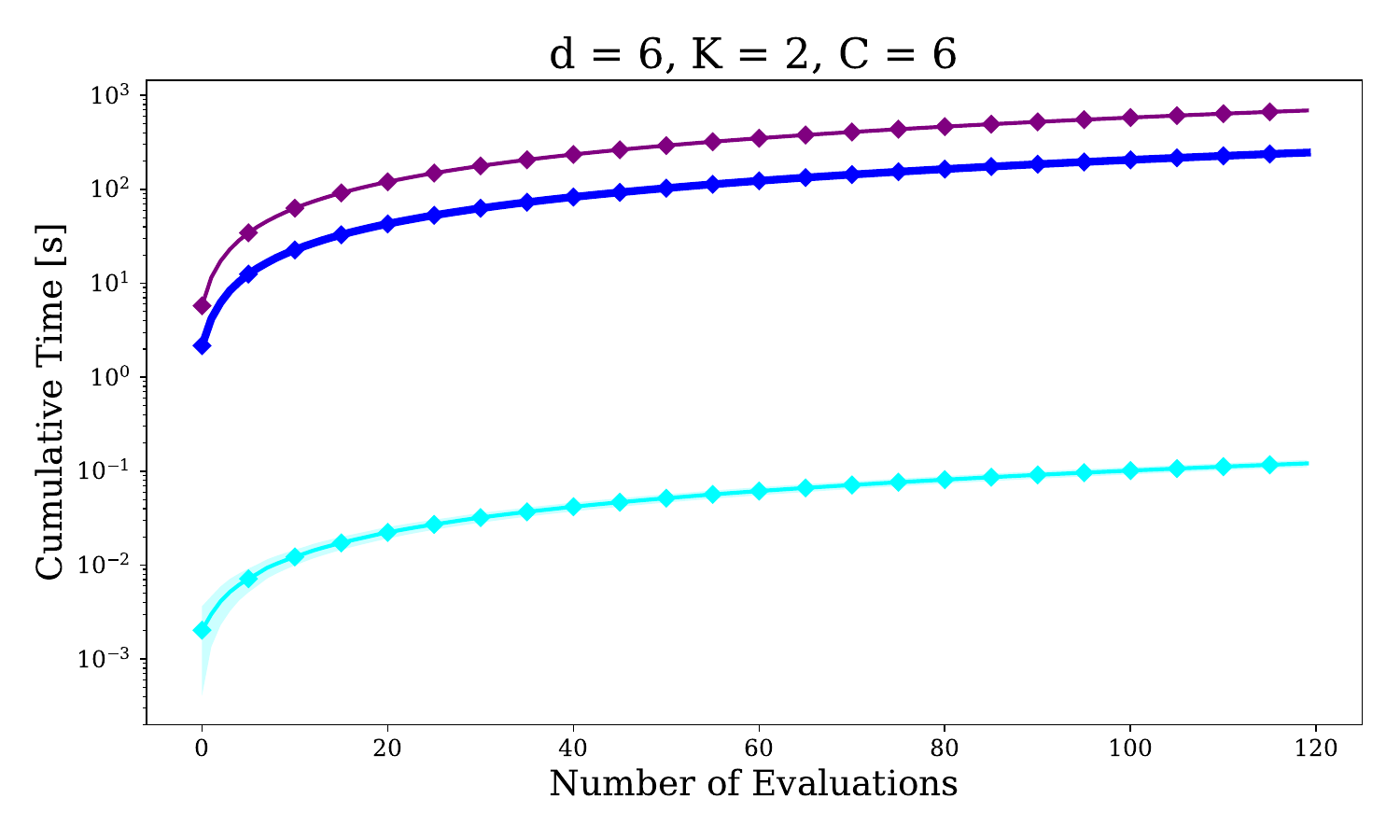}
        \caption{OSY}
    \end{subfigure}\\  
    \begin{subfigure}{1\textwidth}
        \includegraphics[width=1\linewidth]{newest_figs/legend_seq.pdf}        
    \end{subfigure}
    \caption{Cumulative times for sequential acquisition ($q=1$).}
    \label{fig:seq_times}
\end{figure*}

\begin{figure*}[htb!]
    \centering
    \begin{subfigure}{0.33\textwidth}
        \includegraphics[width=1\linewidth]{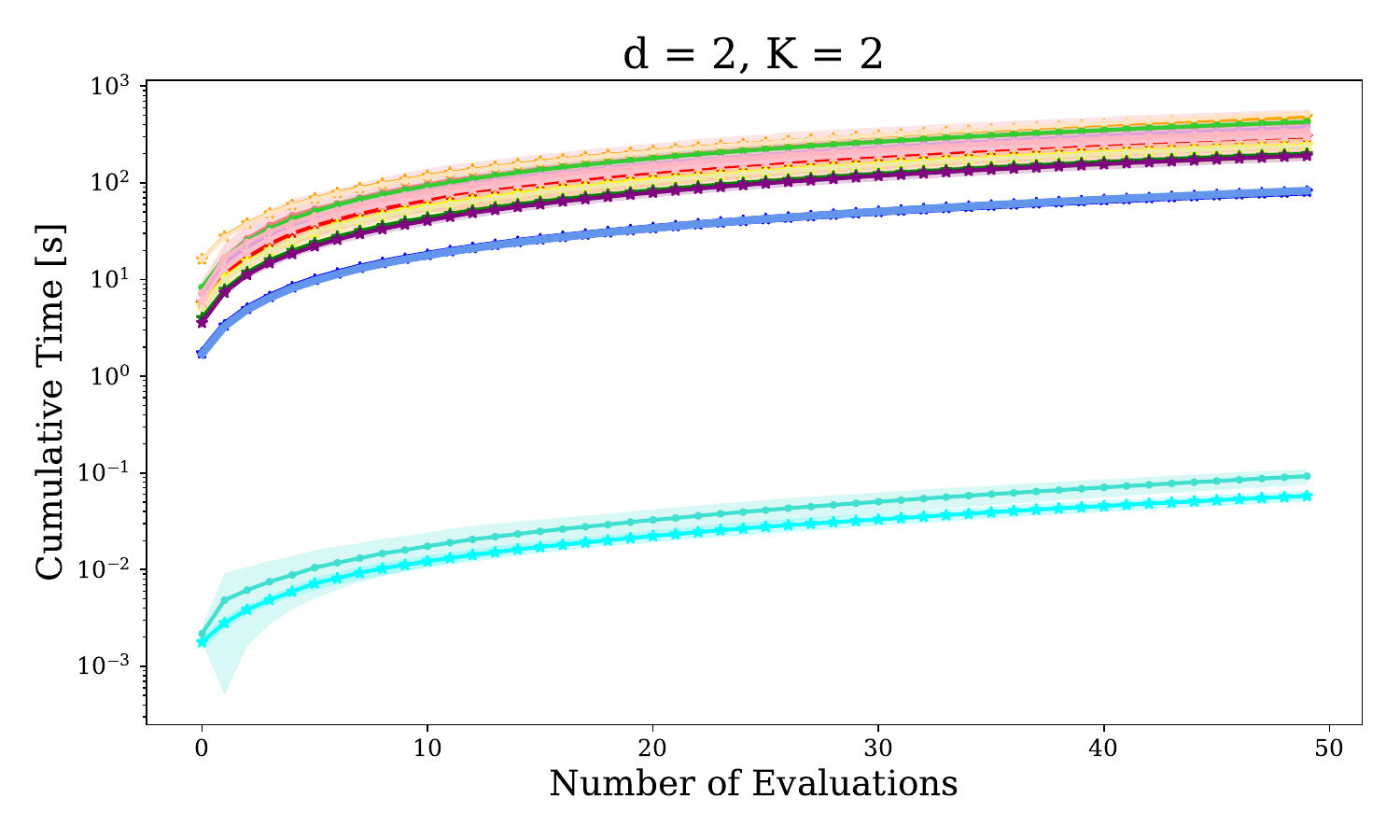}
        \caption{Branin-Currin}
    \end{subfigure}%
    \begin{subfigure}{0.33\textwidth}
        \includegraphics[width=1\linewidth]{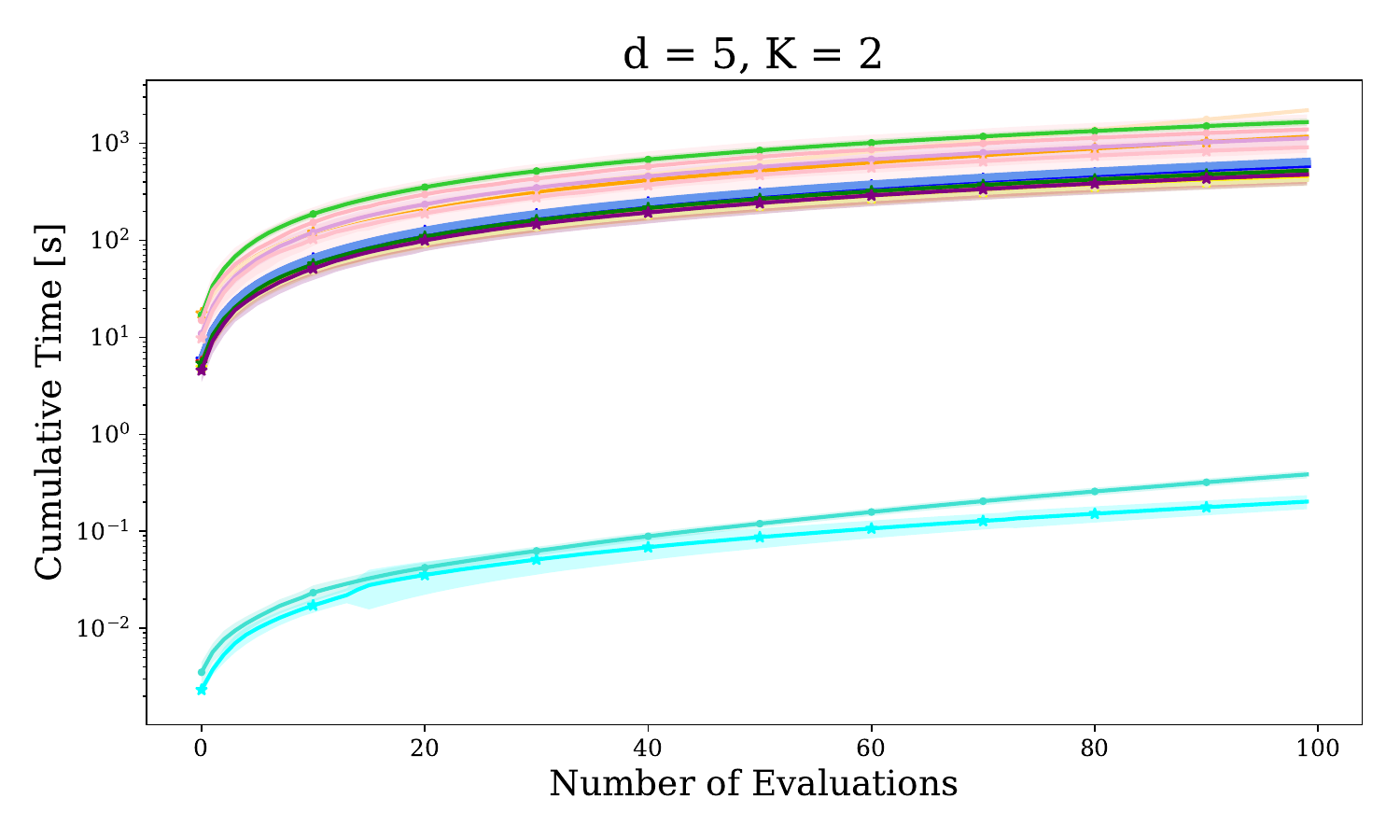}
        \caption{DTLZ7}
    \end{subfigure}%
    \begin{subfigure}{0.33\textwidth}
        \includegraphics[width=1\linewidth]{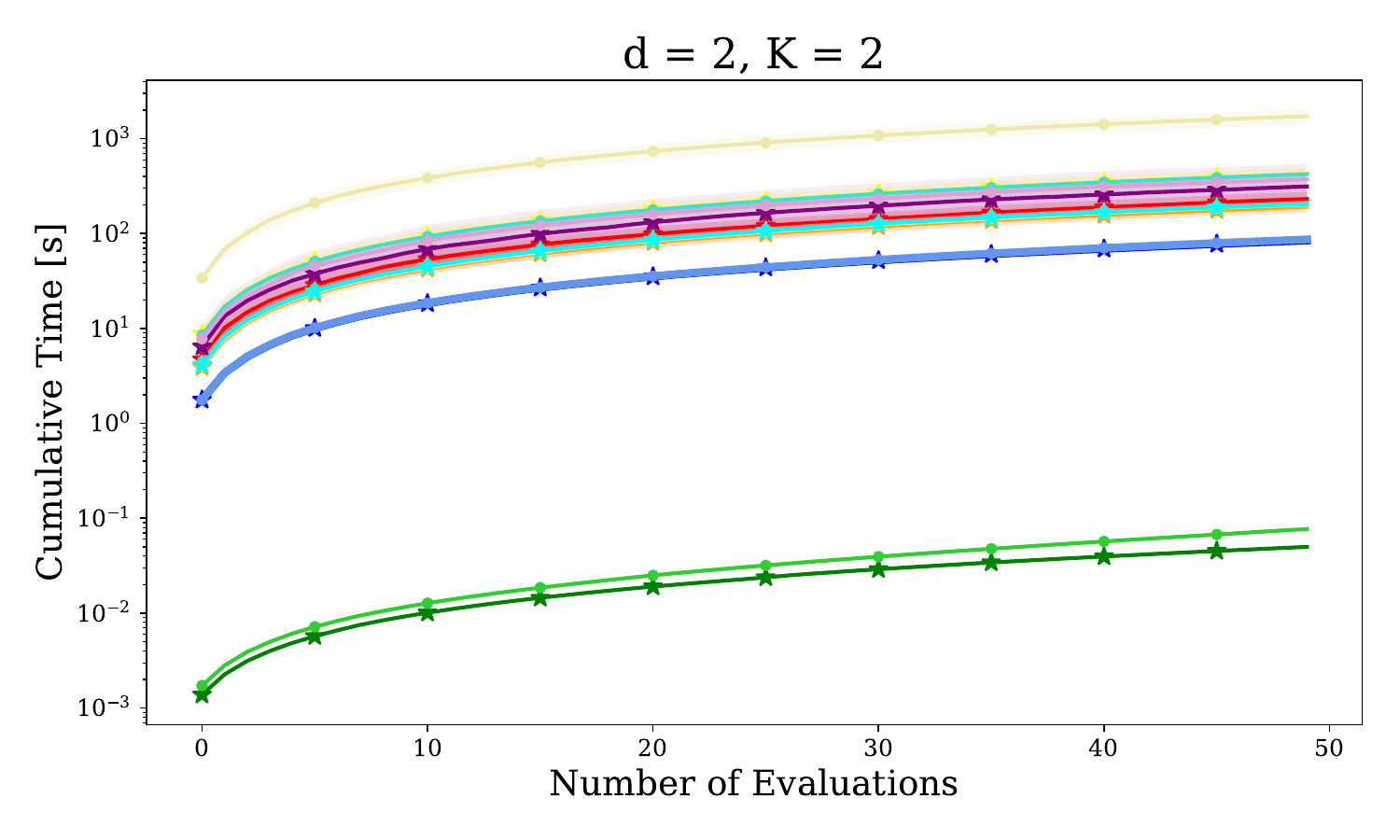}
        \caption{ZDT3}
    \end{subfigure}\\
    \begin{subfigure}{0.33\textwidth}
        \includegraphics[width=1\linewidth]{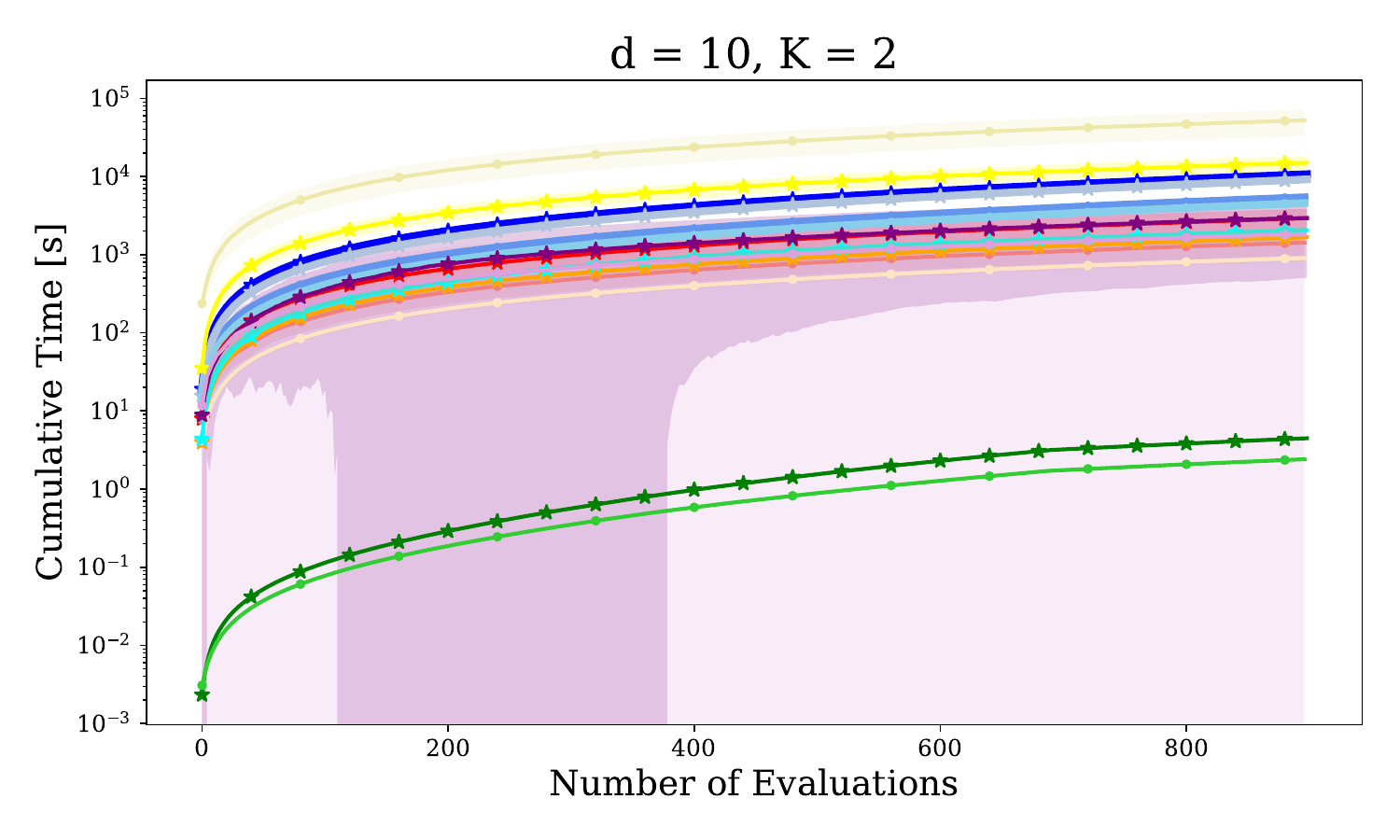}
        \caption{ZDT3}
    \end{subfigure}%
    \begin{subfigure}{0.33\textwidth}
        \includegraphics[width=1\linewidth]{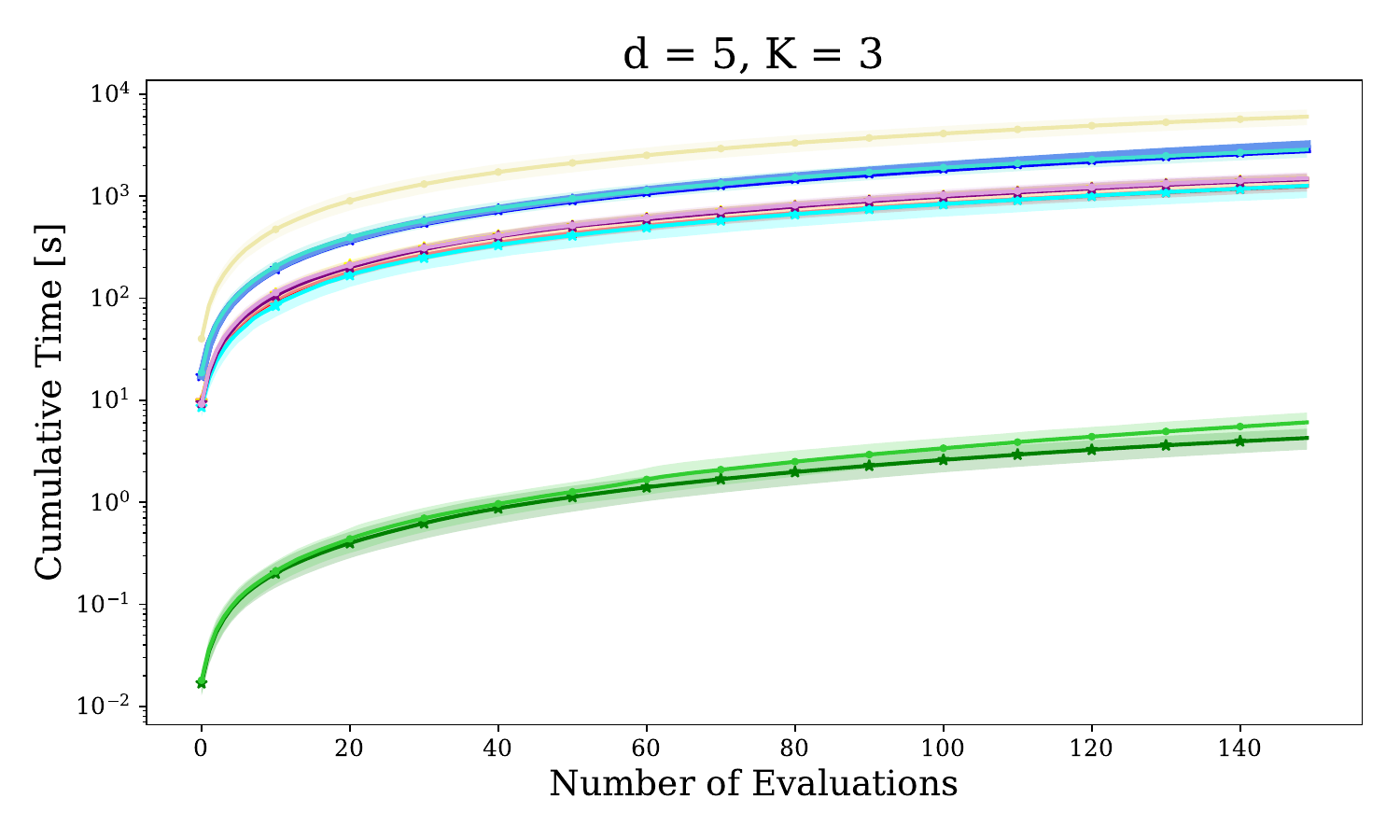}
        \caption{Vehicle design}
    \end{subfigure}%
    \begin{subfigure}{0.33\textwidth}
        \includegraphics[width=1\linewidth]{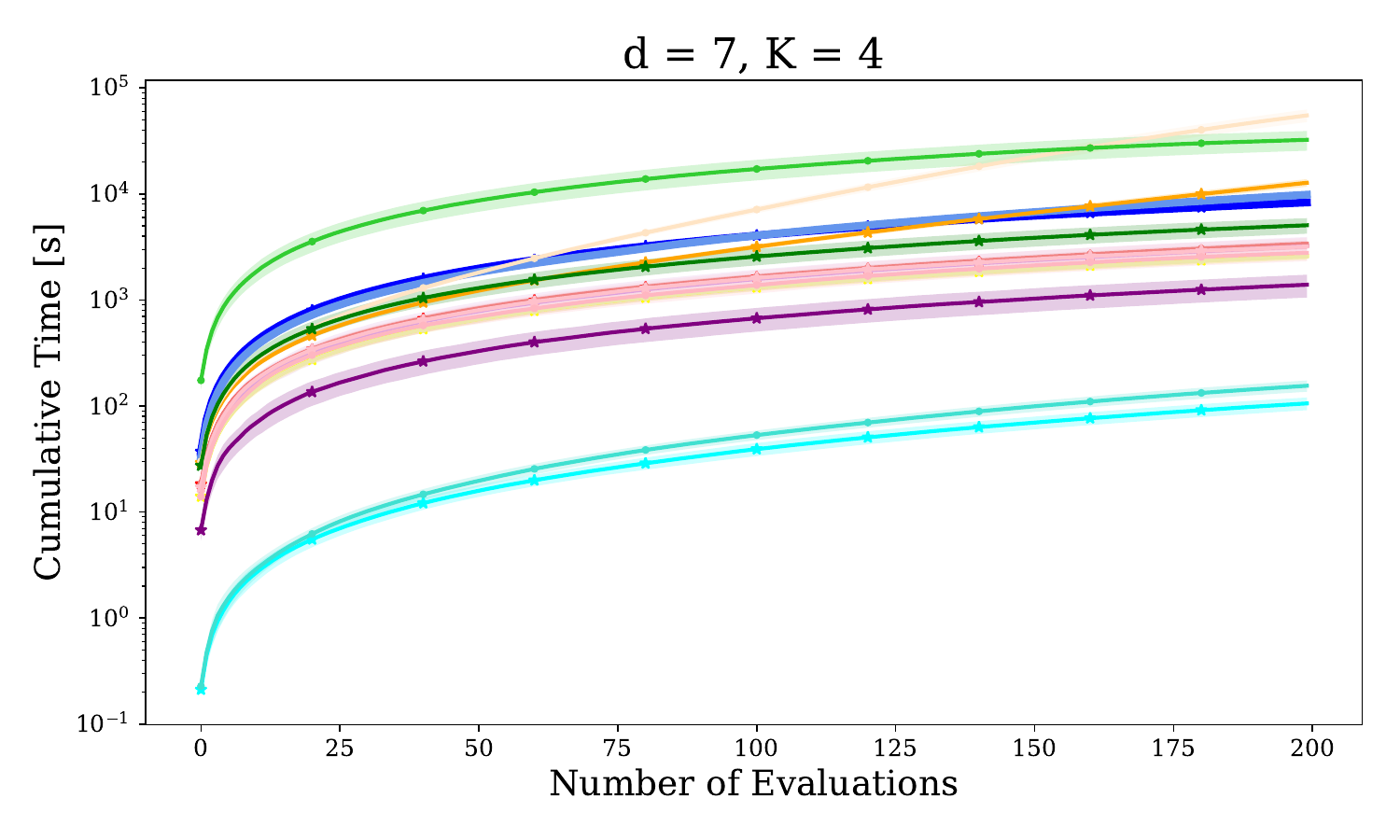}
        \caption{Car side-impact}
    \end{subfigure}\\     
    \begin{subfigure}{0.33\textwidth}
        \includegraphics[width=1\linewidth]{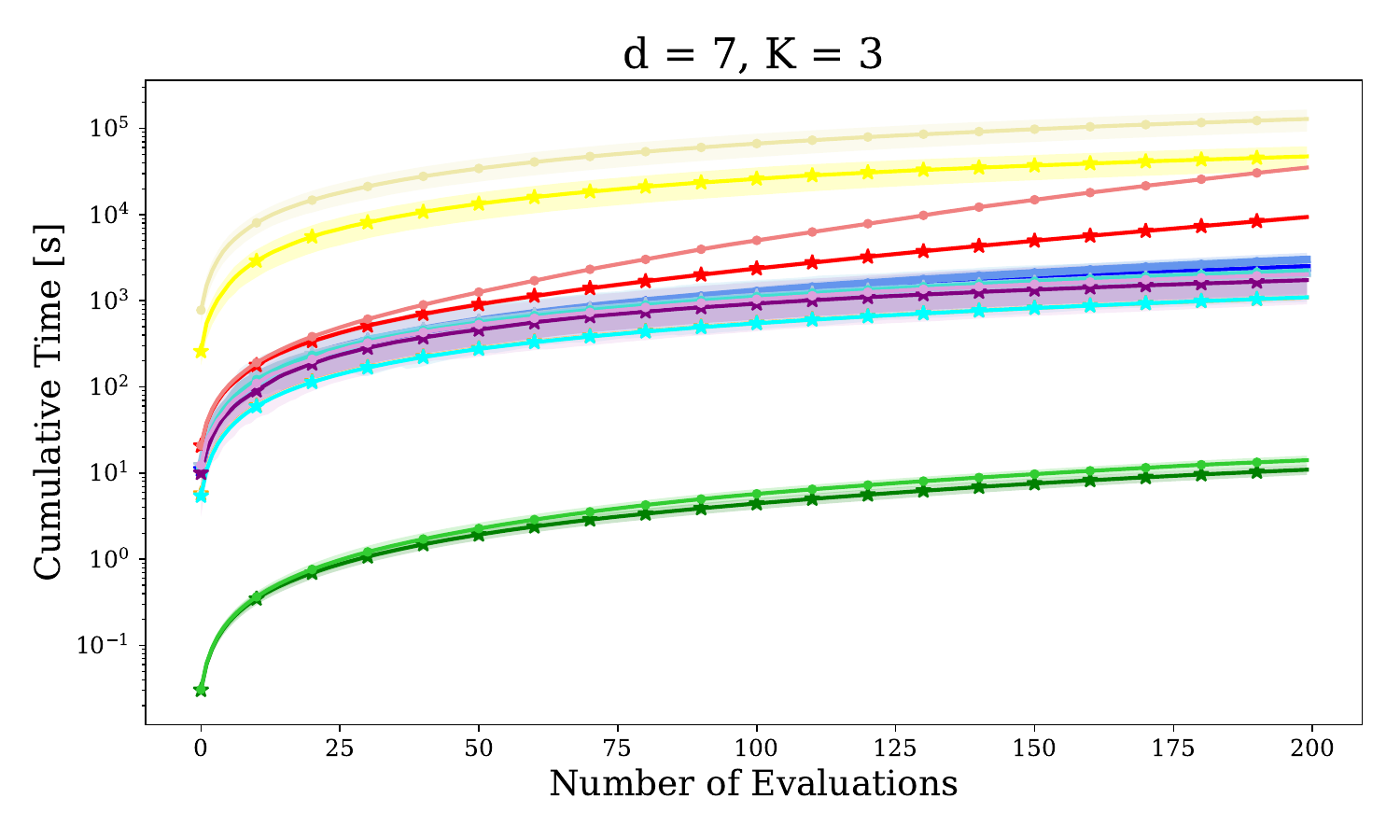}
        \caption{Penicillin production}
    \end{subfigure}%
        \begin{subfigure}{0.33\textwidth}
        \includegraphics[width=1\linewidth]{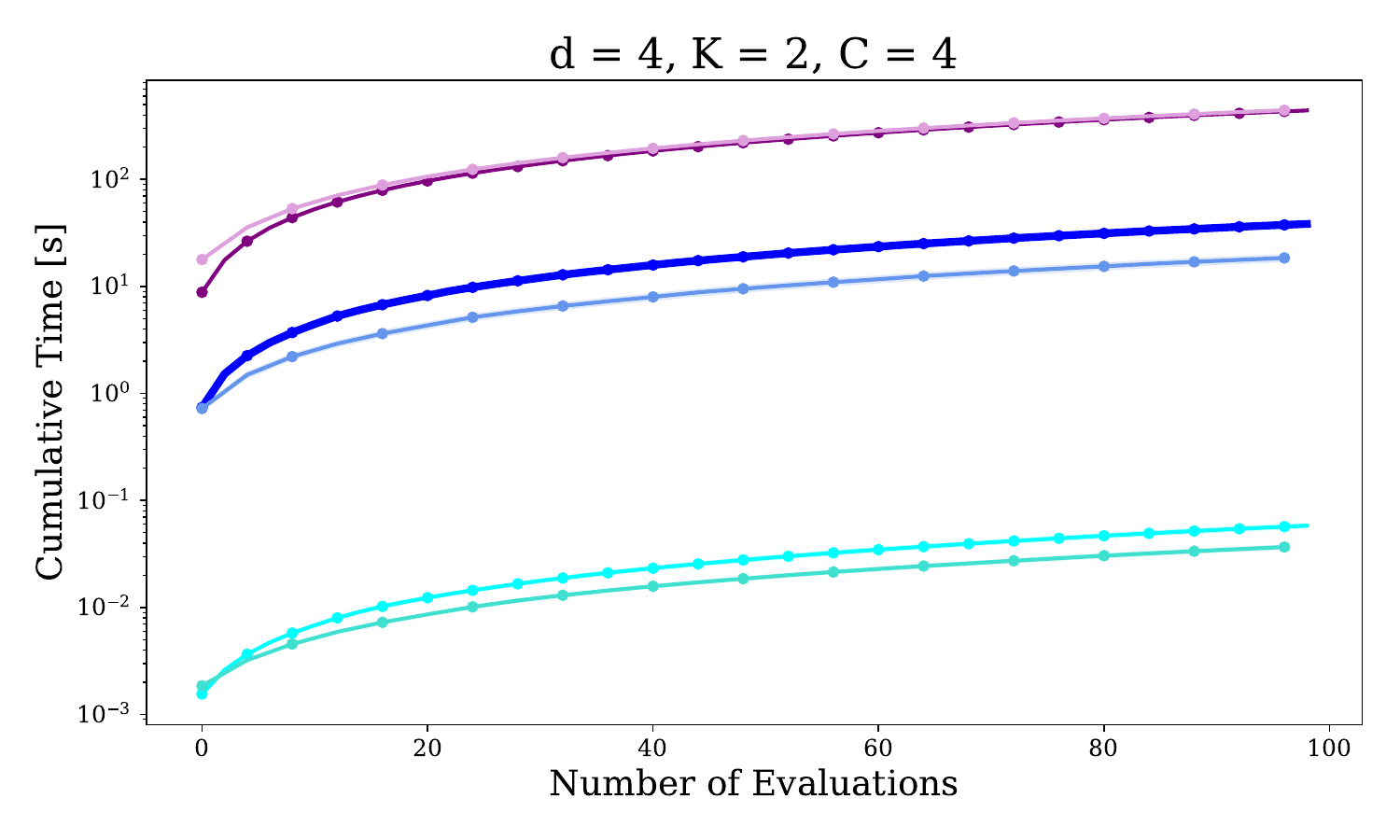}
        \caption{Discbrake}
    \end{subfigure}%
    \begin{subfigure}{0.33\textwidth}
        \includegraphics[width=1\linewidth]{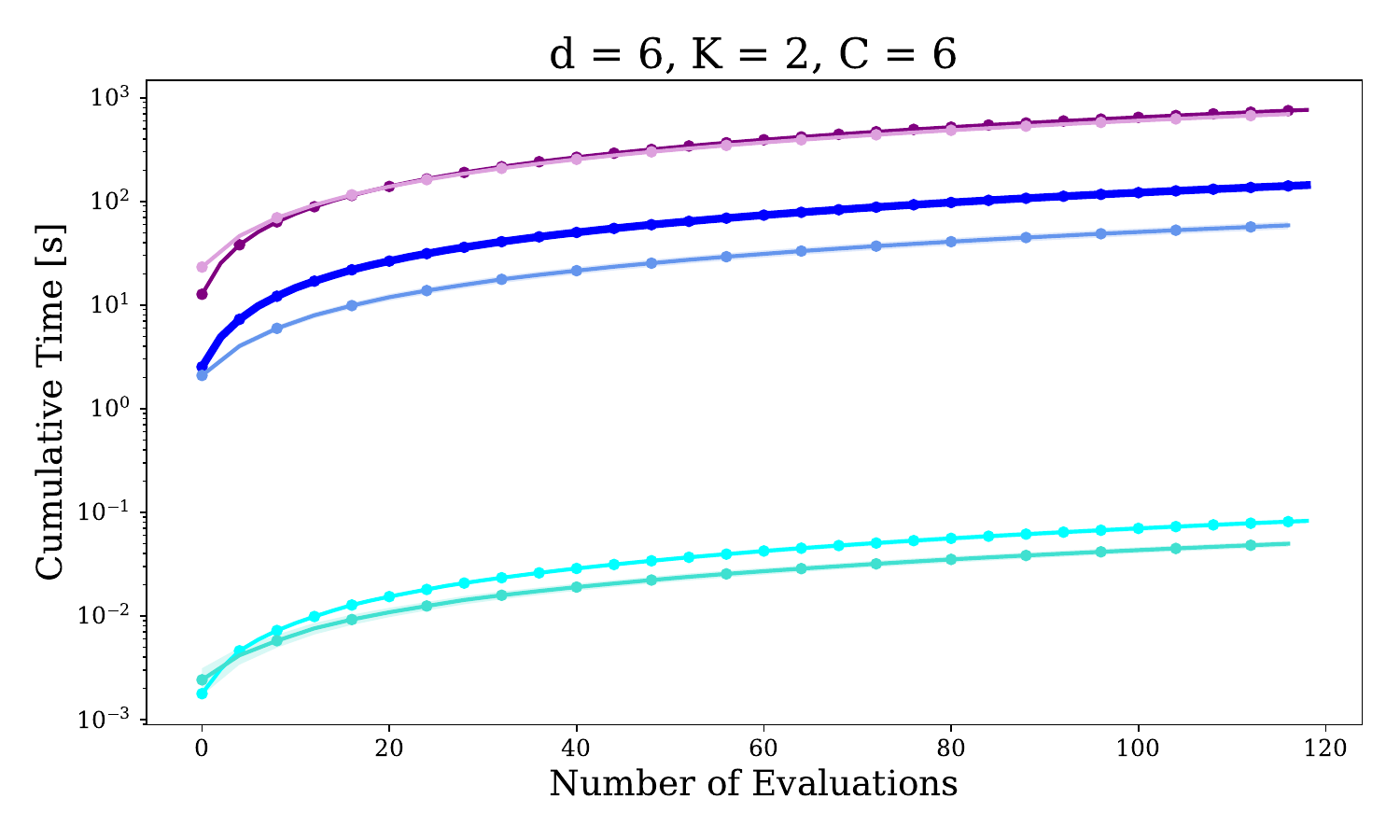}
        \caption{OSY}
    \end{subfigure}\\  
    \begin{subfigure}{0.33\textwidth}
        \includegraphics[width=1\linewidth]{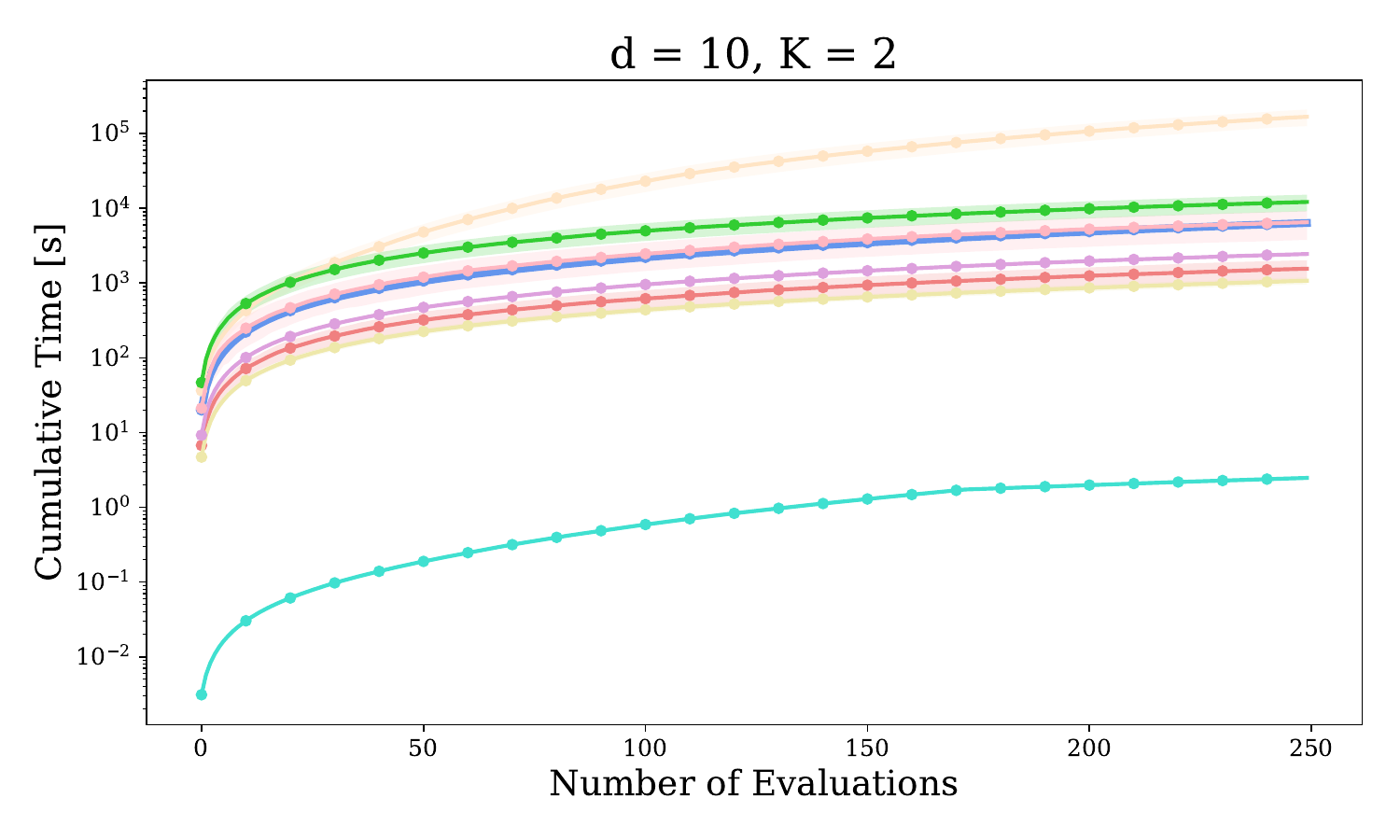}
        \caption{DH3}
    \end{subfigure}%
    \begin{subfigure}{0.33\textwidth}
        \includegraphics[width=1\linewidth]{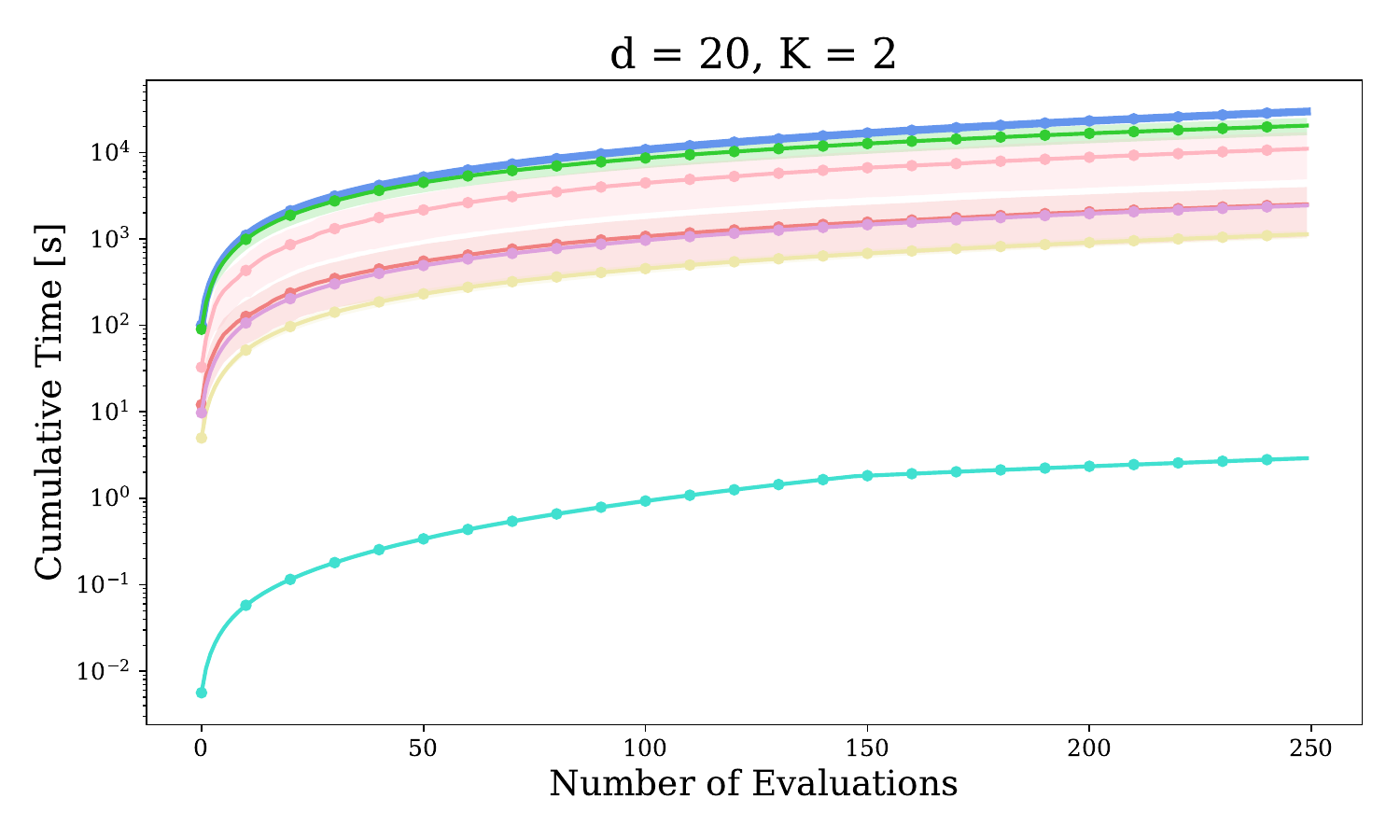}
        \caption{DH3}
    \end{subfigure}\\
    \begin{subfigure}{1\textwidth}
        \includegraphics[width=1\linewidth]{newest_figs/legend_batch.pdf}
    \end{subfigure}
    \caption{Cumulative times for batch acquisition ($q>1$).}
    \label{fig:batch-times}
\end{figure*}

Finally, we show the computational times for all algorithms, including \qpots, each of the experiments. \Cref{fig:seq_times} and \Cref{fig:batch-times} show them for sequential and batch acquisitions, respectively. First, notice that \qpots~is much cheaper than the rest, with the exception of Sobol, for low dimensional problems. In higher dimensions, and with more objectives and constraints, \qpots~is no worse than the rest. The benefit is even more prominent in the batch acquisition setting where the cost of \qpots~remains almost the same as the sequential setting but other acquisition policies become more expensive.

\section{Choice of NSGA-II parameters}
Assumption \ref{ass:ass1} necessitates exact knowledge of $X^*$. We demonstrate that this is reasonable provided that the number of generations and the population size in NSGA-II are sufficiently large. \Cref{fig:ngen} and \Cref{fig:npop} show the impact of systematically increasing both parameters on a constrained bi-objective problem~\footnote{\url{https://pymoo.org/getting_started/part_1.html}} where the true Pareto frontier is analytically known. Notice that as both parameters are increased, the true Pareto frontier is resolved.

\begin{figure*}[htb!]
    \centering
    \begin{subfigure}{0.5\textwidth}
        \includegraphics[width=1\linewidth]{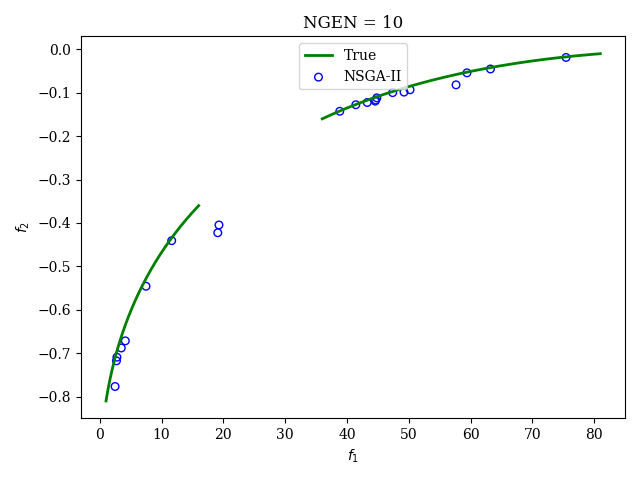}
        \caption{ngen=10}
    \end{subfigure}%
    \begin{subfigure}{0.5\textwidth}
        \includegraphics[width=1\linewidth]{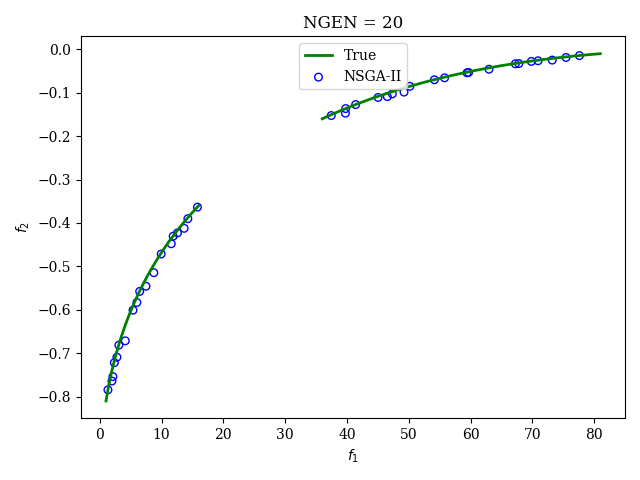}
        \caption{ngen=20}
    \end{subfigure}\\
    \begin{subfigure}{0.5\textwidth}
        \includegraphics[width=1\linewidth]{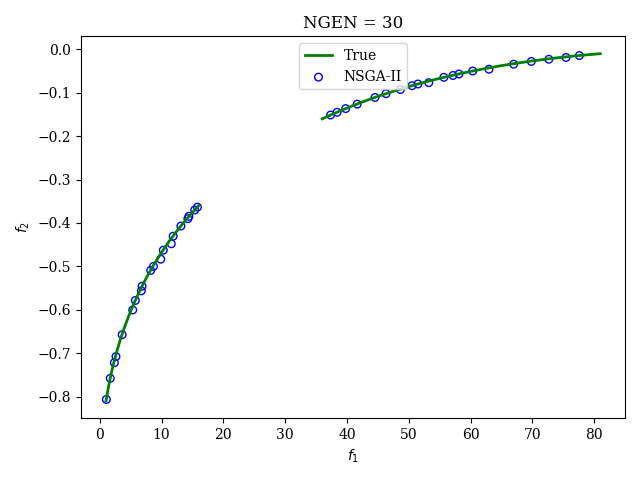}
        \caption{ngen=30}
    \end{subfigure}%
    \begin{subfigure}{0.5\textwidth}
        \includegraphics[width=1\linewidth]{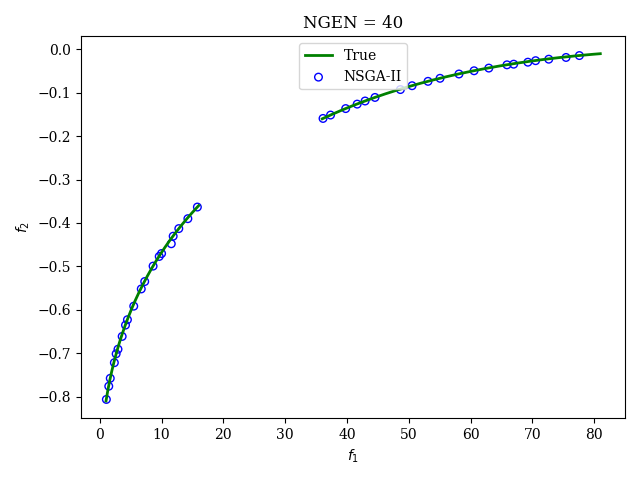}
        \caption{ngen=40}
    \end{subfigure}\\
    \begin{subfigure}{0.5\textwidth}
        \includegraphics[width=1\linewidth]{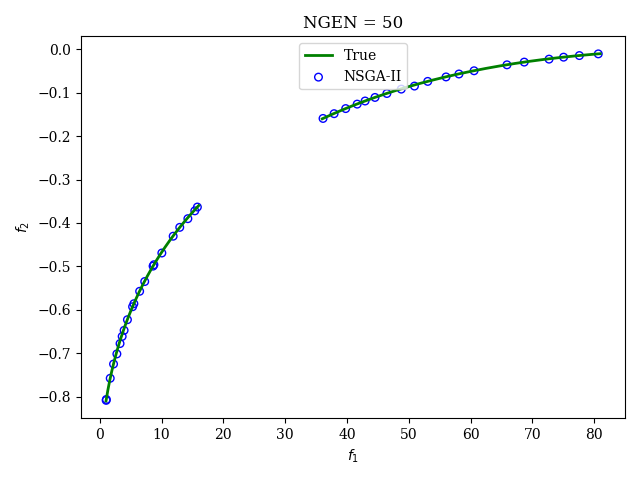}
        \caption{ngen=50}
    \end{subfigure}%
    \begin{subfigure}{0.5\textwidth}
        \includegraphics[width=1\linewidth]{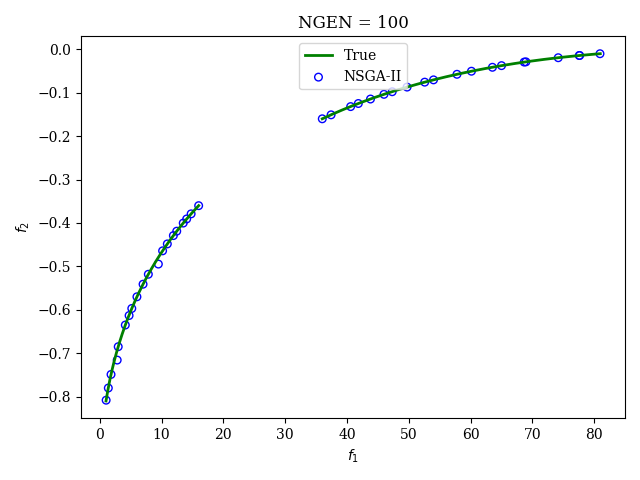}
        \caption{ngen=100}
    \end{subfigure}
    \caption{Impact of increasing number of generations at fixed population size for NSGA-II.}
    \label{fig:ngen}
\end{figure*}

\begin{figure*}[htb!]
    \centering
    \begin{subfigure}{0.5\textwidth}
        \includegraphics[width=1\linewidth]{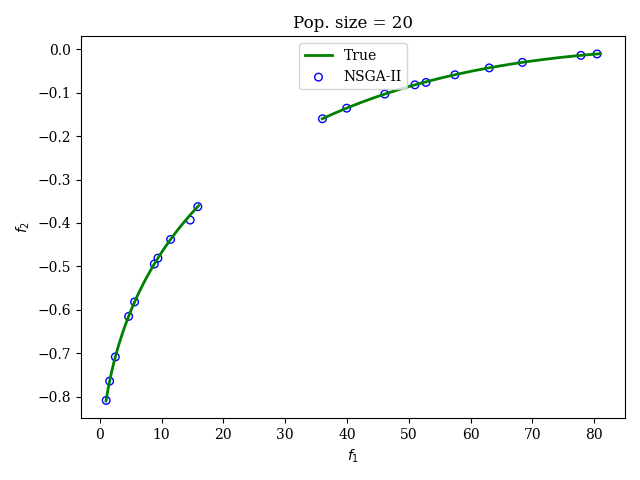}
        \caption{npop=20}
    \end{subfigure}%
    \begin{subfigure}{0.5\textwidth}
        \includegraphics[width=1\linewidth]{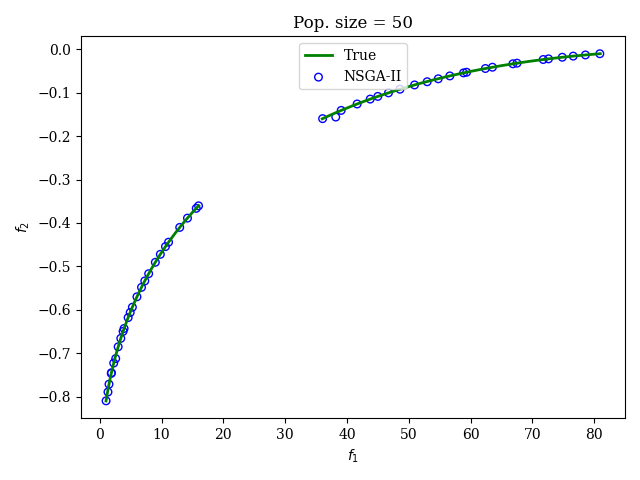}
        \caption{npop=50}
    \end{subfigure}\\
    \begin{subfigure}{0.5\textwidth}
        \includegraphics[width=1\linewidth]{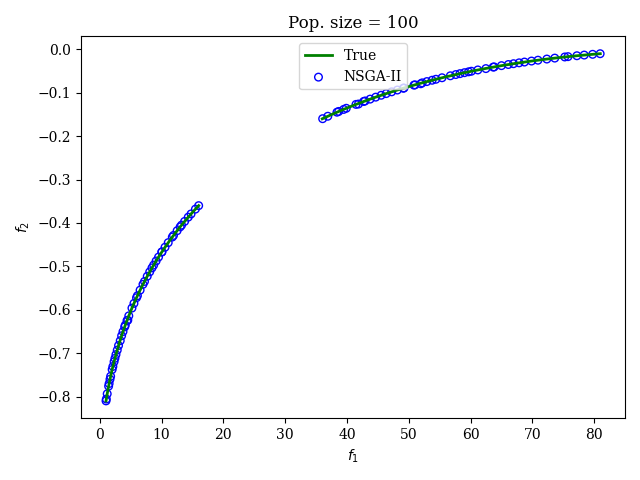}
        \caption{npop=100}
    \end{subfigure}%
    \begin{subfigure}{0.5\textwidth}
        \includegraphics[width=1\linewidth]{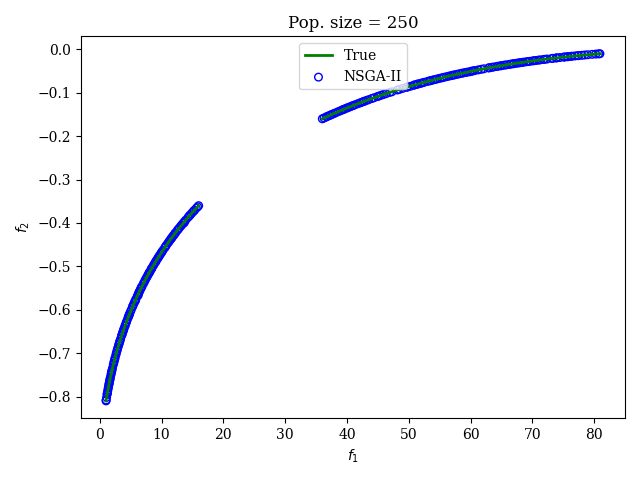}
        \caption{npop=250}
    \end{subfigure}\\
    \begin{subfigure}{0.5\textwidth}
        \includegraphics[width=1\linewidth]{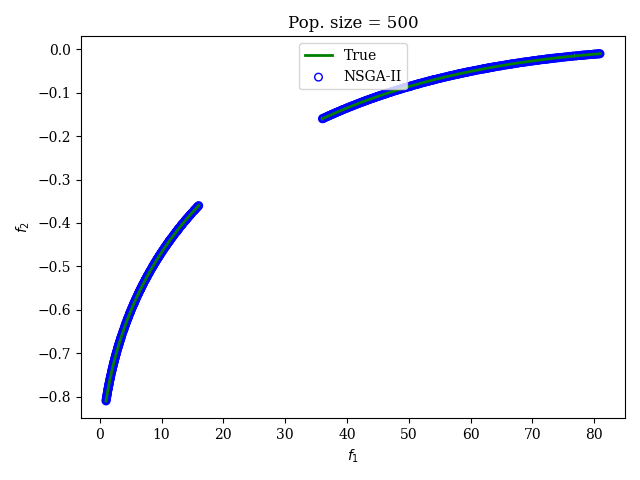}
        \caption{npop=500}
    \end{subfigure}%
    \caption{Impact of increasing population size at fixed number of generations for NSGA-II.}
    \label{fig:npop}
\end{figure*}
\clearpage
 \bibliographystyle{plainnat}
\bibliography{references}
\end{document}